\renewcommand{\O }{\Omega }
\newcommand{\ba}{\begin{align*}}
\newcommand{\ea}{\end{align*}}
\newenvironment{customthm}[1]
  {\innercustomthm}
  {\endinnercustomthm}
\numberwithin{equation}{section}
 \newtheorem{theorem}{Theorem}[section]
\newtheorem{proposition}[theorem]{Proposition}
\newtheorem{lemma}[theorem]{Lemma}
\newtheorem{corollary}[theorem]{Corollary}
\newtheorem{remark}[theorem]{Remark}
\newcommand{\ble}{\begin{lemma}}
\newcommand{\ele}{\end{lemma}}
\newcommand{\be}{\begin{equation*}}
\newcommand{\ee}{\end{equation*}}
\newcommand{\bel}{\begin{equation}}
\newcommand{\eel}{\end{equation}}
\DeclareMathOperator{\Id}{Id}
\DeclareMathOperator{\dist}{dist}
\newcommand{\ep}{\varepsilon}
\newcommand{\fr}{\frac }
\newcommand{\N}{\mathbb{N}}
\newcommand{\na}{\nabla}
\newcommand{\R}{\mathbb{R}}
\renewcommand{\to}{\rightarrow}
\newcommand{\xip}{x_{i,p}}
\newcommand{\xjp}{x_{j,p}}
\newcommand{\mip}{\mu_{i,p}}
\newcommand{\upp}{u_p}
\newcommand{\coorduno}{^{ \scaleto{1}{3.5pt} }}
\newcommand{\coorddue}{^{ \scaleto{2}{3.5pt} }}
\numberwithin{equation}{section}
\begin{document}

\title[]{Sharp boundary concentration for a\\ two-dimensional  nonlinear Neumann problem}

\author[F. De Marchis]{Francesca De Marchis}
\address{Francesca De Marchis, Dipartimento di Matematica, Sapienza Universit\`a di Roma, P.le Aldo Moro 5, 00185 Roma (Italy)}
\email{demarchis@mat.uniroma1.it}

\author[H. Fourti]{Habib Fourti}
\address{Habib Fourti, University of Monastir, Faculty of Sciences, 5019 Monastir (Tunisia)\\ Laboratory LR 13 ES 21, University of Sfax, Faculty of Sciences,
3000 Sfax (Tunisia)}
\email{habib.fourti@fsm.rnu.tn}

\author[I. Ianni]{Isabella Ianni}
\address{Isabella Ianni, Dipartimento di Scienze di Base e Applicate per l'Ingegneria, Sapienza Universit\`a di Roma, Via Antonio Scarpa 10, 00161 Roma (Italy)}
\email{isabella.ianni@uniroma1.it}

\subjclass[2020]{35B05, 35B06, 35J91}

\keywords{Nonlinear boundary value problem, large exponent,
asymptotic analysis, concentration of solutions}

\thanks{F. De Marchis and I. Ianni are partially supported by  INDAM - GNAMPA and  {\sl Fondi Ateneo - Sapienza},  I. Ianni is also partially supported by  MUR-PRIN $2022$7HX$33$Z}

\maketitle

\begin{abstract}
We consider the elliptic equation $-\Delta u+ u=0$ in a bounded, smooth domain $\Omega\subset\mathbb R^{2}$ subject to the nonlinear Neumann boundary condition $\partial u/\partial\nu =
|u|^{p-1}u$ on $\partial\O$ and 
study the asymptotic behavior as the exponent $p\rightarrow +\infty$ of 
families of positive solutions $u_p$ satisfying uniform energy bounds. We 
prove energy quantization and 
characterize the boundary concentration. In particular we describe the local asymptotic profile of the solutions around each concentration point and get sharp convergence results for the $L^{\infty}$-norm.
 \end{abstract}

\section{Introduction} \label{SectionIntroduction} 

Let $\Omega$ be a bounded domain in $\mathbb R^{2}$ with smooth boundary $\partial\Omega$. This paper deals with the analysis of solutions of the boundary value problem\begin{equation}\label{problem1}
\left\{\begin{array}{lr}\Delta u= u&\qquad  \mbox{ in }\Omega\\
u>0 &\qquad  \mbox{ in }\Omega\\
\frac{\partial u}{\partial \nu}=u^{p}&\ \mbox{ on }\partial
\Omega
\end{array}\right.
\end{equation}
where $\nu$ denotes the outer unit normal vector to $\partial\Omega$ and $p>1$. Two dimensional elliptic equations with nonlinear Neumann boundary conditions arise in many fields (conformal geometry, corrosion modelling, etc...) see for instance \cite{BFS,  BryanVogelius,  ChangLiu, Cherrier, Cruz, DavilaDM,  GL, Aleks, KavianVogelius, LWZ,    Soriano, MedvilleVogelius,  WW} and in particular, \cite{Castro2, Castro1, F, Takahashi} where problem \eqref{problem1} is considered.

\

Observe that solutions to \eqref{problem1} correspond to critical points in $H^{1}(\Omega)$ of the free energy functional
$$E_p(u):=\displaystyle \frac{1}{2}\int_{\Omega} (|\nabla u|^2+u^2) \ dx-\frac{1}{p+1}\int_{\partial \Omega}
u^{p+1}\ d\sigma,$$ and  by the compact trace and Sobolev embeddings
$H^1(\Omega)\hookrightarrow
H^\frac{1}{2}(\partial\Omega)\hookrightarrow L^p(\partial\Omega)$, one can derive the existence of at least a 
solution for any fixed exponent $p>1$ by standard variational methods (see for instance \cite{Takahashi}). For multiplicity results for $p$ large enough see Castro  (\cite{Castro2}) and for sign-changing solutions see for instance \cite{LWZ}.

\

This paper is devoted to the study of the asymptotic
behavior, as $p\rightarrow +\infty$, of general families of non-trivial solutions $u_{p}$ to
\eqref{problem1} under a uniform bound of their energy,  namely we assume \begin{equation}\label{energylimit}
p\displaystyle\int_{\Omega}(|\nabla u_p|^2+u_p^2) \ \ dx \rightarrow
\beta \in \mathbb{R}, \quad \hbox{as }p\rightarrow +\infty.
\end{equation}

\

In \cite{Takahashi}, and later in \cite{Castro1}, this analysis has been carried out for the  family of least energy solutions. Note that these solutions satisfy the condition
$$p\displaystyle\int_{\Omega}(|\nabla u_p|^2+u_p^2)\ dx
\rightarrow 2\pi e, \quad \hbox{as }p\rightarrow +\infty,$$ which is
a particular case of \eqref{energylimit}. In \cite{Takahashi} it was proved that least energy solutions remain bounded uniformly in $p$, and  develop one peak on the boundary, whose  location is controlled by the Green's function $G$ for the Neumann problem
\begin{equation}\label{Greenequation}
\left\{\begin{array}{lr}\Delta_x G(x,y)= G(x,y)\qquad  \mbox{ in }\Omega,\\
\frac{\partial G}{\partial \nu_x}(x,y)=\delta _y(x) \mbox{ on
}\partial \Omega,
\end{array}\right.
\end{equation}
 $y\in \partial \Omega$. Indeed the concentration point turns out to satisfy $\nabla_{\tau(x_{0})}R(x_{0})=0$, where $\tau(x_{0})$ denotes a tangent vector at the point $x_{0}\in\partial\Omega$, the Robin function is defined as $R(x):=H(x,x)$, where $H$ is the regular part of $G$:
\begin{equation}\label{regularpart}
H(x, y) := G(x, y) - \frac{1}{\pi}\log \frac{1}{ |x - y|} .
\end{equation}

Later Castro
\cite{Castro1} identified a limit problem by showing that a suitable
scaling of the least energy solutions converges in $C^1_
{loc}(\overline{\mathbb{R}^2_+})$ to the regular solution 
\begin{equation}\label{v0}
U(t_1,t_2)=\log\left(\frac{4}{t_1^2+(t_2+2)^2}\right)
\end{equation}
of the
Liouville problem
\begin{equation}\label{Liouvilleproblem}
\left\{\begin{array}{lr}\Delta U= 0\qquad  \mbox{ in }\mathbb{R}^2_+\\
\frac{\partial U}{\partial \nu}=e^U\ \mbox{ on }\partial
\mathbb{R}^2_+\\
\int_{\partial\mathbb{R}^2_+} e^{U}=2\pi  \hbox{ and }
\sup_{\overline{\mathbb{R}^2_+}} U<+\infty.
\end{array}\right.
\end{equation}
He also proved that for least energy solutions 
\[\|u_p\|_{\infty} \rightarrow \sqrt{e}\mbox{ as
}p\rightarrow \infty,\] as it had been previously conjectured in
\cite{Takahashi}.

 \

Observe that problem \eqref{problem1} also admits  families of solutions which develop  $m$ boundary peaks as $p\rightarrow\infty$, for any integer $m\geq 1$, as proved in \cite{Castro2} and indeed, recently in \cite{F}, it has been proved that the boundary concentration behavior characterizes any family of solutions to \eqref{problem1} which satisfy the uniform energy bound \eqref{energylimit} (i.e. not only the least energy ones).

\

In order to state the results of \cite{F} we define, for a sequence $p_n\rightarrow +\infty$, the
blow-up set $\mathcal S$ of the sequence $p_nu_{p_n}$, where $u_{p_n}$ solves \eqref{problem1}, to be the subset 
\begin{equation}\label{def:blowupset}
\mathcal{S}:=\{\bar x\in\overline\Omega\ :\ \ \exists\, (x_n)_{n}\in\overline{\Omega},\ x_n\rightarrow\bar  x , \mbox{ with }\   p_nu_{p_{n}}(x_n) \rightarrow +\infty \}.
\end{equation}

 We summarize the results in \cite{F} as follows:

\begin{customthm}{I}\label{provaI}
\textit{Let $(u_p)_{p}$ be a family of solutions of
\eqref{problem1} satisfying \eqref{energylimit}. Then  there exist $C, c, \tilde c,\tilde C>0$ such that \begin{equation}
\label{boundSoluzio}c\leq\|\upp\|_{L^\infty(\overline{\Omega})}\leq C,\
\mbox{ for $p>1$}
\end{equation}
\begin{equation}
\label{boundEnergiap}\tilde c\leq p\int_{\partial\Omega} u_p^p d\sigma
\leq \tilde C, \mbox{ for $p$ large. }
\end{equation}
Furthermore  for
any sequence $p_n\rightarrow +\infty$, there
exists a subsequence (still denoted by $p_n$) such that the
following statements hold true:
\begin{enumerate}
\item There exists an integer $m\geq 1$, a finite collection of $m$ distinct points $\bar x_i \in
\partial \Omega$, $i = 1,\ldots,m$, such that the
blow-up set $\mathcal S$ of the sequence $p_nu_{p_n}$ is given by 
\begin{equation}\label{defSIntro}
\mathcal{S}=\{\bar x_1, \bar x_2,\ldots, \bar x_m\}.
\end{equation}
\item 
There exist $m$ positive constants $c_i>0$, $i = 1,\ldots,m$, such that
\[p_nu_{p_n}^{p_n} \displaystyle\stackrel{*}{\rightharpoonup}
\sum^m_{i=1}c_i\delta_{\bar x_i}\mbox{  in the sense of Radon measures on
$\partial \Omega$} 
\]
and
\begin{equation}
\label{convpup}
\lim_{n\rightarrow \infty}p_{n}u_{p_n}=\sum^m_{i=1}c_iG(. , \bar x_i)\quad\mbox{ in }C^1_{loc}(\overline{\Omega}\setminus \mathcal{S}),\ L^t(\Omega)\mbox{ and }
L^t(\partial\Omega),\   \forall\, t\in [1,+\infty),
\end{equation} 
 where
$G$ is the Green's function for the Neumann problem
\eqref{Greenequation}.
\item The points $\bar x_i, \
i=1,\ldots, m$, satisfy
\begin{equation}\label{impx_j relazioneintro}
c_i \nabla_{\tau(\bar x_i)} H(\bar x_i,\bar x_i)+\sum_{h\neq
i}c_h\nabla_{\tau(\bar x_i)} G(\bar x_i,\bar x_h)=0,
\end{equation}
where $\tau(\bar x_i)$ is a tangent vector to $\partial\Omega$ at $\bar x_i$ and $H$ is the regular part of $G$ as defined in \eqref{regularpart}.
\end{enumerate}
}
\end{customthm}

\

Theorem \ref{provaI} shows boundary concentration at a finite number of points in $\mathcal S\subset\partial\Omega$, moreover 
by \eqref{convpup} and \eqref{Gboundness} it follows that in any compact subset of $\overline\Omega\setminus\mathcal S$
\begin{equation}\label{convpupNEWW}
pu_{p}\leq C, 
\end{equation}
and so 
\begin{equation}
\label{uVaAZeroNeiCompatti}
\lim_{n\rightarrow \infty}u_{p_n} = 0 \quad\mbox{ in }C^1_{loc}(\overline\Omega\setminus\mathcal S).\end{equation}
Many questions arise from this result:
\begin{itemize}
\item
How does $u_{p_{n}}$ behave {\it close} to the points $\bar x_{i}$?\\
In particular, what is the asymptotic behavior of $\|u_{p_{n}}\|_{\infty}$?
\item
Can one compute the constants $c_{i}$ which appear at points (2) and (3 ) in Theorem \ref{provaI}?
\item 
What one can say about the total energy of $u_{p_{n}}$?
\end{itemize}

\

Looking at the asymptotic results for least energy solutions (\cite{Takahashi, Castro1}) and at the existence results of solutions with multiple concentrations points (\cite{Castro2}), it was conjectured in \cite{F} that for general solutions of \eqref{problem1} under the uniform energy assumption \eqref{energylimit} the constants $c_i$'s must be all equal and that an asymptotic  quantization of the energy must occur, more precisely it was conjectured that:
\begin{equation}c_{i}=2\pi \sqrt{e}, \quad  \mbox{ for }\, 1\leq i \leq m, \label{C1} \tag{C1}\end{equation}
\begin{equation}
p_{n}\int_\Omega (|\nabla u_{p_{n}}|^2+u_{p_{n}}^2)\,dx\to
m\cdot2\pi e;
\label{C2} \tag{C2}\end{equation}
as $n\rightarrow \infty$, and furthermore that 
\begin{equation}
\|u_p\|_{L^\infty(\overline{\Omega})}\to
\sqrt{e},\label{C3} \tag{C3}
\end{equation}
as $p\rightarrow +\infty$.

\

Here we answer these questions, proving in particular \eqref{C1}, \eqref{C2} and \eqref{C3}.

\

\begin{theorem}
\label{teo:Positive}
 Let $(u_p)_{p}$ be a family of 
 solutions
to \eqref{problem1} satisfying \eqref{energylimit} and let 
$p_n\rightarrow +\infty$, as $n\rightarrow \infty$, be the subsequence  such that the statements in Theorem \ref{provaI} hold true.
Then 
\begin{itemize}
\item[$(i)$] 
  \[c_{i}=\lim_{\delta\to0}\lim_{n\to+\infty}p_n\int_{B_\delta(\bar x_i)\cap\partial\Omega}u_{p_n}^{p_n}dx=2\pi \sqrt{e}, \quad  \mbox{ for }\, 1\leq i \leq m; \]
\item[$(ii)$] 
 \[\lim_{\delta\rightarrow 0}\lim_{n\rightarrow \infty}\|u_{p_{n}}\|_{ L^{\infty}(B_{\delta}(\bar x_{i})\cap \overline\Omega) }=\sqrt e\qquad \forall \, i=1,\ldots, m,\]
where $B_\delta(\bar x_i)$ is a ball of center at $\bar x_i$ and radius $\delta > 0$;
\item[$(iii)$] \[\lim_{n\rightarrow \infty}p_{n}\int_\Omega (|\nabla u_{p_{n}}|^2+u_{p_{n}}^2)\,dx= m\cdot 2\pi e;\]
\item[$(iv)$] let $\delta >0$ be such that $B_{2\delta}(\bar x_{i})\cap B_{2\delta}( \bar x_{j})=\emptyset$ for $i\neq j$ and let   
 $({{y}}_{i,n})_{n}\subset\overline{ B_{\delta}(\bar x_{i})\cap\Omega}$, $i=1,\ldots, m$, be the sequences of local maxima of $u_{p_{n}}$ around $\bar x_i$, namely
\[ u_{p_{n}}({{y}}_{i,n}):=\|u_{p_{n}}\|_{L^{\infty}(B_{\delta}( \bar x_{i})\cap \overline\Omega) },\]
then $({{y}}_{i,n})_{n}\subset\partial\Omega,$ $\lim_{n\rightarrow \infty}|{{y}}_{i,n}-\bar x_i|=0$ and, setting  
 $\mu_{i,n}:=\left(p_{n} u_{p_{n}}({{y}}_{i,n})^{p_{n}-1}\right)^{-1}(\rightarrow 0)$, then
\[
w_{i,n}(t):=\frac{p_{n}}{u_{p_{n}}({{y}}_{i,n})}\bigg(u_{p_{n}}\big(\Psi_i^{-1}( b_{i,n}+\mu_{i,n}
t)\big)-u_{p_{n}}({{y}}_{i,n})\bigg),
\]
where $b_{i,n}=\Psi_i({{y}}_{i,n})$,   $t\in T_{n}:=\{t\in \mathbb R^2\,:\,b_{i,n}+\mu_{i,n}t\in\Psi_i(\overline{\Omega}\cap
{B_{R_i}(\bar x_{i}}))\}$ and $\Psi_i$ is a change of coordinates which flattens $\partial\Omega$ near $\bar x_{i}$ and $R_{i}>0$ is a suitable radius 
(see Subsection \ref{sectionchange}).\\
Then 
\[\lim_{n\rightarrow \infty}w_{i,n}=U\quad\mbox{ in }C^1_{loc}(\overline{\R^2_+}),\] where $U$
is the solution \eqref{v0} of the Liouville problem \eqref{Liouvilleproblem}.
\end{itemize}
\end{theorem}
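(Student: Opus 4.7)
My plan is to prove (iv) by a blow-up analysis around each local maximum $y_{i,n}$; the same analysis will deliver the relation $c_i=2\pi L_i$ with $L_i:=\lim_n u_{p_n}(y_{i,n})$, reducing (i) to (ii). Part (ii), i.e.\ $L_i=\sqrt e$, I would then obtain by matching the inner bubble-scale asymptotics of $u_{p_n}$ with the outer Green's-function expansion (\ref{convpup}) in an intermediate region around $\bar x_i$. Finally (iii) will follow by testing (\ref{problem1}) against $u_{p_n}$ and combining (i)--(ii).

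For (iv) and the first half of (i), first $\Delta u_{p_n}=u_{p_n}>0$ forces $u_{p_n}$ to be subharmonic, so the strong maximum principle yields $y_{i,n}\in\partial\Omega$, and (\ref{uVaAZeroNeiCompatti}) together with (\ref{convpup}) gives $y_{i,n}\to\bar x_{i}$. After flattening via $\Psi_i$, a direct computation shows that $w_{i,n}$ satisfies $\Delta_t w_{i,n}=o(1)$ in $\R^2_+$ and $\partial_{\nu_t}w_{i,n}=(1+w_{i,n}/p_n)^{p_n}$ on $\partial\R^2_+$, with $w_{i,n}\le w_{i,n}(0)=0$. Thanks to (\ref{boundSoluzio}) the $w_{i,n}$ are locally equibounded, and standard elliptic estimates give, along a subsequence, $w_{i,n}\to w_\infty$ in $C^{1}_{\rm loc}(\overline{\R^2_+})$, where $w_\infty$ solves the Liouville problem $\Delta w_\infty=0$, $\partial_\nu w_\infty=e^{w_\infty}$, $\sup w_\infty=0$ and $\int_{\partial\R^2_+}e^{w_\infty}<+\infty$ (the integrability by (\ref{boundEnergiap})); the classification of solutions of (\ref{Liouvilleproblem}) forces $w_\infty=U$. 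Changing variable $y=b_{i,n}+\mu_{i,n}t$ on $\partial\Omega\cap B_{R\mu_{i,n}}(\bar x_{i})$ and using $p_n\mu_{i,n}u_{p_n}(y_{i,n})^{p_n-1}=1$ one gets
\[
p_n\int_{\partial\Omega\cap B_{R\mu_{i,n}}(\bar x_i)}u_{p_n}^{p_n}\,d\sigma\ \longrightarrow\ L_i\int_{\partial\R^2_+\cap B_R}e^U\,dt_1;
\]
letting $R\to+\infty$ (using $\int_{\partial\R^2_+}e^U=2\pi$) and $\delta\to 0$, the annular remainder being negligible by (\ref{convpupNEWW}), yields $c_i=2\pi L_i$.

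The main obstacle is the identification $L_i=\sqrt e$. The idea is to promote the $C^{1}_{\rm loc}$ convergence of $w_{i,n}$ to a uniform expansion on the wide annulus $\{1\le|t|\le r/\mu_{i,n}\}$: via Harnack-chain and barrier arguments in the spirit of Brezis--Merle and Li--Shafrir for two-dimensional Liouville-type equations, one expects $w_{i,n}(t)=U(t)+o(1)$ uniformly there. Granted this, for any fixed $x\in\overline\Omega$ with $|x-\bar x_i|=r$ small set $t_n=(\Psi_i(x)-b_{i,n})/\mu_{i,n}$; using $U(t)=\log 4-2\log|t|+o(1)$ as $|t|\to\infty$ and the identity $2\log\mu_{i,n}=-2\log p_n-2(p_n-1)\log u_{p_n}(y_{i,n})$ one obtains
\[
u_{p_n}(x)=u_{p_n}(y_{i,n})\Bigl(1+\tfrac{w_{i,n}(t_n)}{p_n}\Bigr)=u_{p_n}(y_{i,n})-\tfrac{2(p_n-1)}{p_n}u_{p_n}(y_{i,n})\log u_{p_n}(y_{i,n})+o(1),
\]
since $(\log r)/p_n$, $(\log p_n)/p_n$ and $1/p_n$ all tend to $0$. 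On the other hand $x\in\overline\Omega\setminus\mathcal S$, so (\ref{uVaAZeroNeiCompatti}) forces $u_{p_n}(x)\to 0$. Equating gives $L_i(1-2\log L_i)=0$, whence $L_i=\sqrt e$, which proves (ii); combined with the previous step this also yields (i).

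Finally, testing (\ref{problem1}) against $u_{p_n}$ and integrating by parts gives $p_n\int_\Omega(|\nabla u_{p_n}|^2+u_{p_n}^2)\,dx=p_n\int_{\partial\Omega}u_{p_n}^{p_n+1}\,d\sigma$. Splitting the right-hand side as $\sum_i p_n\int_{B_\delta(\bar x_i)\cap\partial\Omega}u_{p_n}\cdot u_{p_n}^{p_n}\,d\sigma+o(1)$ (the tail being $o(1)$ by (\ref{convpupNEWW})) and combining the Radon-measure convergence $p_n u_{p_n}^{p_n}\stackrel{*}{\rightharpoonup}\sum c_i\delta_{\bar x_i}$ with the localized $L^\infty$-limit $u_{p_n}\to\sqrt e$ near $\bar x_i$ from (ii), one obtains $p_n\int u_{p_n}^{p_n+1}\,d\sigma\to\sum_i c_iL_i=m\cdot 2\pi e$, proving (iii). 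The real technical difficulty throughout is the uniform expansion $w_{i,n}\approx U$ on the large neck annulus, together with the delicate treatment of $(1+w_{i,n}/p_n)^{p_n}$; without this step one cannot close the matching identity that pins down $L_i$, and consequently neither (ii) nor the $L^\infty$-convergence used in (iii) can be obtained.
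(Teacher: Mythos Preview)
Your overall architecture---blow up around local maxima, classify the limit as $U$, relate $c_i$ to $L_i:=\lim_n u_{p_n}(y_{i,n})$, then pin down $L_i$ and deduce the energy---matches the paper's. The divergence is in the two technical hinges: how to get $c_i=2\pi L_i$ (equivalently, no mass in the neck) and how to compute $L_i$.

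\textbf{First gap.} Your justification ``the annular remainder being negligible by \eqref{convpupNEWW}'' does not work: \eqref{convpupNEWW} controls $p_n u_{p_n}$ only on compact subsets of $\overline\Omega\setminus\mathcal S$, whereas the neck $\{R\mu_{i,n}\le |x-\bar x_i|\le\delta\}$ meets every neighborhood of $\bar x_i$. Fatou gives the easy inequality $c_i\ge 2\pi L_i$; the reverse inequality is exactly the ``no mass in the neck'' statement and is not free. The paper obtains it by a local Pohozaev identity on $B_\delta^+(\bar x_i)$ (Lemma~\ref{propbeta}): the boundary terms are computed using the $C^1_{loc}$ convergence \eqref{convpup} on $S_\delta(\bar x_i)$, and one gets $\limsup_n \beta_{i,n}\le 2\pi$ where $\beta_{i,n}=\frac{p_n}{u_{p_n}(y_{i,n})}\int_{D_r(y_{i,n})}u_{p_n}^{p_n}\,d\sigma$, hence $c_i=2\pi L_i$.

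\textbf{Second gap.} Your computation of $L_i$ by matching at a fixed exterior point $x$ requires the two-sided expansion $w_{i,n}(t)=U(t)+o(1)$ uniformly on the whole annulus $1\le|t|\le r/\mu_{i,n}$. This is strictly stronger than what Brezis--Merle/Li--Shafrir type arguments typically yield, and in this problem the paper only proves (and only needs) the one-sided decay
\[
w_{i,n}(t)\le (2-\gamma)\log\frac{1}{|t|}+C_\gamma
\]
(Lemma~\ref{lem44}), whose proof in turn uses the Pohozaev step above. With only this upper bound your matching at $x$ gives an inequality, not the identity $L_i(1-2\log L_i)=0$. The paper avoids this by evaluating the Green representation at the \emph{interior} point $y_{i,n}$ (Proposition~\ref{valoremi}): writing $u_{p_n}(y_{i,n})=\int_{\partial\Omega}G(y_{i,n},x)u_{p_n}^{p_n}\,d\sigma$, rescaling, and using the one-sided decay for dominated convergence isolates the term $-\tfrac{u_{p_n}(y_{i,n})\log\varepsilon_{i,n}}{\pi p_n}(2\pi+o(1))$, which leads directly to $\log m_i=\tfrac12$. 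This yields the same final equation as yours but without the uniform neck expansion.

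\textbf{On (iii).} Pairing the Radon-measure convergence of $p_n u_{p_n}^{p_n}$ with the varying function $u_{p_n}$ is not legitimate as stated, since $u_{p_n}$ is not a fixed continuous test function and you need a lower bound on $u_{p_n}$ where the mass sits. The paper instead bounds $p_n\int_{D_r(y_{i,n})}u_{p_n}^{p_n+1}$ above by $u_{p_n}(y_{i,n})^2\beta_{i,n}\to 2\pi L_i^2$ and below by the Fatou argument, again relying on the Pohozaev lemma.

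In short, the missing ingredient throughout is the local Pohozaev identity; once it is in place, the Green-representation route to $L_i=\sqrt e$ is both easier and requires less than the uniform neck expansion you propose.
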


Theorem \ref{teo:Positive}  shows that the conjectures \eqref{C1} and \eqref{C2}  are true, furthermore points $(ii)$ and $(iv)$  provide information on the  solutions close to the concentration points $\bar x_i$ for $p$ large, in particular we identify the same limit profile $U$ around each concentration point. We stress that in  \cite{F} only the existence of a \textit{first bubble} $U$ was proved, scaling the solution around the sequence of global maxima, while the behavior around the other concentration points was unknown.
Observe that  $U$ is the same profile describing the least energy solutions  (for which $m=1$, see  \cite{Castro1}), and indeed our theorem, combined with the results in \cite{F} (Theorem \ref{provaI}), extends to  general families of solutions the asymptotic results proved in \cite{Takahashi,Castro1} for least energy solutions, thus giving a complete characterization of the asymptotic behavior for problem \eqref{problem1}. We remark that the number $m$ of concentration points coincides with the maximal number $k$ of bubbles $U$ which may appear as limit profiles (for details see Proposition \ref{thm:x1N} and \eqref{N=k} in Proposition \ref{proposizione:quasiQuanteConclusione}).
\\
We stress that from \eqref{impx_j relazioneintro} and point {\sl (i)} in Theorem \ref{teo:Positive} we also deduce that the concentration $m$-tuple $(\bar x_{1},\ldots, \bar x_{m})\in\partial\Omega$ is a critical point of the function $\varphi_{m}:(\partial\Omega)^{m}\to \mathbb R$
\begin{equation}
\label{impx_j relazioneintroNEW}\varphi_{m}(x_{1},\ldots, x_{m}):=\sum_{i=1}^{m}H(x_{i},x_{i})+\sum_{i\neq h}^{m}G(x_{i},x_{h}).
\end{equation}

We point out that  \eqref{uVaAZeroNeiCompatti} and $(ii)$-Theorem \ref{teo:Positive} clearly imply  that also conjecture \eqref{C3} holds true:
\begin{corollary}
 Let $(u_p)_{p}$ be a family of 
 solutions
to \eqref{problem1} satisfying \eqref{energylimit}.
Then 
\begin{equation}
\label{theorem3} 
\displaystyle\lim_{p\rightarrow+\infty}\|u_p\|_{\infty}=\sqrt{e}.\end{equation}
\end{corollary}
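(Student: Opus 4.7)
The plan is to deduce \eqref{theorem3} directly from point $(ii)$ of Theorem \ref{teo:Positive} together with the decay \eqref{uVaAZeroNeiCompatti} on compact subsets of $\overline\Omega\setminus\mathcal{S}$. Since $(ii)$ is stated only along the subsequence $p_n$ and on shrinking balls around the $\bar x_i$, I would first reduce matters to a subsequence-extraction argument, and then control the behaviour both inside and outside those balls separately.

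Concretely, suppose by contradiction that \eqref{theorem3} fails. Then there exist $\varepsilon>0$ and a sequence $p_n\to+\infty$ with $\big|\|u_{p_n}\|_\infty-\sqrt e\big|\geq\varepsilon$ for every $n$. Applying Theorems \ref{provaI} and \ref{teo:Positive} to this sequence, I extract a further subsequence (still denoted by $p_n$) along which $\mathcal{S}=\{\bar x_1,\ldots,\bar x_m\}\subset\partial\Omega$ and assertion $(ii)$ holds at each $\bar x_i$. Fix $\delta>0$ so small that the balls $B_\delta(\bar x_i)$ are pairwise disjoint, and set $K_\delta:=\overline\Omega\setminus\bigcup_{i=1}^m B_\delta(\bar x_i)$, which is a compact subset of $\overline\Omega\setminus\mathcal{S}$. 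Writing
\[\|u_{p_n}\|_\infty=\max\Big\{\max_{1\leq i\leq m}\|u_{p_n}\|_{L^\infty(B_\delta(\bar x_i)\cap\overline\Omega)},\ \|u_{p_n}\|_{L^\infty(K_\delta)}\Big\},\]
and using that $\|u_{p_n}\|_{L^\infty(K_\delta)}\to 0$ by \eqref{uVaAZeroNeiCompatti}, I obtain for every such $\delta>0$
\[\limsup_{n\to\infty}\|u_{p_n}\|_\infty\ \leq\ \max_{1\leq i\leq m}\lim_{n\to\infty}\|u_{p_n}\|_{L^\infty(B_\delta(\bar x_i)\cap\overline\Omega)},\]
and letting $\delta\to 0^+$ point $(ii)$ yields $\limsup_n\|u_{p_n}\|_\infty\leq\sqrt e$. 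The matching lower bound is immediate from $\|u_{p_n}\|_\infty\geq\|u_{p_n}\|_{L^\infty(B_\delta(\bar x_1)\cap\overline\Omega)}$: once again by $(ii)$, $\liminf_n\|u_{p_n}\|_\infty\geq\sqrt e$. Hence $\|u_{p_n}\|_\infty\to\sqrt e$ along the subsequence, contradicting the choice of $(p_n)$.

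The argument is essentially bookkeeping: the only mild subtlety is the iterated limit $\lim_{\delta\to0}\lim_{n\to\infty}$ in which $(ii)$ is phrased, but both the $\limsup$ and the $\liminf$ in $n$ are controlled by the same inner limit, which then tends to $\sqrt e$ as $\delta\to 0^+$. Since the nontrivial work has already been carried out in Theorems \ref{provaI} and \ref{teo:Positive}, no serious obstacle is expected in this final step.
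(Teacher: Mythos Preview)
Your argument is correct and is exactly the approach indicated in the paper, which simply remarks that \eqref{uVaAZeroNeiCompatti} and $(ii)$ of Theorem \ref{teo:Positive} ``clearly imply'' \eqref{theorem3} without spelling out the subsequence-extraction and inside/outside decomposition; you have merely written out those details.
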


\

It is worth to remark the interesting analogy between the results here obtained for the Neumann problem \eqref{problem1} and those known for the Lane-Emden equation under Dirichlet boundary condition
\begin{equation}\label{problem111}
\left\{\begin{array}{lr}\Delta u= |u|^{p-1}u &\qquad  \mbox{ in }\Omega\\
u>0&\qquad  \mbox{ in }\Omega\\
 u=0&\ \mbox{ on }\partial \Omega.
\end{array}\right.
\end{equation}
The asymptotic behavior as $p\rightarrow +\infty$ of families $(u_{p})_{p}$ of solutions of \eqref{problem111}, under the assumption that condition \eqref{energylimit} holds, is well understood after the works \cite{RenWeiTAMS1994,RenWeiPAMS1996,AdiGrossi, DeMarchisIanniPacellaJEMS,DeMarchisIanniPacellaPositivesolutions, DGIP1,Thizy}, and the results established therein can be tought as the analogs of Theorem \ref{provaI} and Theorem \ref{teo:Positive}.
In particular it is known that $u_{p}$ stays uniformly bounded and that, up to subsequences,  peaks-up as $m$ points in the domain $\Omega$ (\cite{DeMarchisIanniPacellaJEMS}). Furthermore, it is proved  (\cite{DeMarchisIanniPacellaPositivesolutions, DGIP1, Thizy})  that   \eqref{theorem3} holds and that the concentration appears at a critical point of the functional  \eqref{impx_j relazioneintroNEW},  now defined on $\Omega^{m}$, where $G$ and $H$ are respectively  Green's and Robin's functions of $-\Delta$ in $\Omega$ under Dirichlet boundary conditions. Moreover  there is quantization of the energy, since 
\[\lim_{n\rightarrow \infty}p_{n}\int_\Omega (|\nabla u_{p_{n}}|^2+u_{p_{n}}^2)\,dx= m\cdot 8\pi e,\]
and  limit profiles  are identified. 

\

In this work we perform a blow-up analysis for the solutions of problem \eqref{problem1} following the approach developed in \cite{DeMarchisIanniPacellaPositivesolutions,DGIP1} in the framework of the Lane-Emden Dirichlet problem \eqref{problem111}. Of course one has to be very careful since now we have a boundary concentration
phenomenon due to the Neumann boundary condition, while the concentration for problem \eqref{problem111} is in $\Omega$. 
\\

We prove Theorem \ref{teo:Positive} by first performing an exhaustion method which provides a construction of concentration points. This approach relies on the energy bound assumption \eqref{energylimit} and comes with pointwise estimates of the solutions and with the description of the local asymptotic profile $U$. Similar methods have been exploited for more general $2$D Dirichlet problems (see \cite{Druet, DeMarchisIanniPacellaJEMS}), also in higher dimension (see for instance \cite{SantraWei, DruetHebeyRobert}). We have adapted the construction to deal with the Neumann problem, taking advantage also of the results in \cite{F}, this part can be found in Section \ref{sectionGeneralAsy}.
\\

Afterwards, in Section \ref{section:positive}, we refine the asymptotic analysis, showing that one can actually scale the solutions around local maxima and deriving  the sharp constants and the energy quantization. These proofs rely on a detailed local blow-up analysis, in particular we use a local Pohozaev identity (see the proof of Lemma \ref{propbeta}), pointwise estimates of the rescaled functions (see Lemma \ref {lem44}) and exploit the Green representation formula for the solutions to \eqref{problem1} (see the proof of Proposition \ref{valoremi}). Finally, at the end of Section \ref{section:positive}, we complete the proof of Theorem \ref{teo:Positive}. \\

We have postponed to Appendix  \ref{SectionAppendix} some technical estimates used throughout the paper.

\

\section{Notations} \label{notations} 
 We list here some notations used throughout the paper. First the coordinates of a point will be denoted as follows: $x=(x\coorduno, x\coorddue)\in\mathbb R^2$.\\
Newt we denote the open ball centered at a point $q=(q\coorduno,q\coorddue)\in\mathbb R^2$ and radius  $r>0$ as $B_r(q):=\{x\in \mathbb R^2\ : \ |x-q|<r\}$.
 We also define the open half ball as
 \begin{equation}\label{defhalfball}
 B_r^+(q):=B_r(q)\cap \{x\in \mathbb R^2\ : \ x\coorddue>q\coorddue\},
 \end{equation}
 its flat boundary as
 \begin{equation}\label{defflatbound}
D_r(q):=B_r(q)\cap \{x\in \mathbb R^2\ : \ x\coorddue=q\coorddue\}
\end{equation}
and its curved boundary as
 \begin{equation}\label{defcurvbound}
S_r(q):=\{x\in \mathbb R^2\ : \ |x-q|=r,\quad x\coorddue>q\coorddue\}.
\end{equation}
Moreover $\dist (x,\partial\Omega)=\inf_{y\in\partial\Omega}|x-y|$. We stress that $C$ will be a positive constant which can change from line to line.

\

\

\subsection{Change of coordinates which straightens out $\partial\Omega$ near a point on $\partial \Omega$}\label{sectionchange}
$\,$\\
We assume that $\partial\Omega\in C^2$. 
We fix a point on $\partial\Omega$ that we denote by $Q\in\partial\Omega$, in the following the change of coordinates defined below will be applied around the points in $ S=\{\bar x_{1}, \ldots, \bar x_{m}\}$ (see Theorem \ref{teo:Positive}) and around limit points of suitable special sequences (see Section \ref{sectionGeneralAsy}).

It can be proved  that there exist $R>0$ and a $C^2$ function $\rho:\mathbb R\rightarrow \mathbb R$ such that, up to reordering the coordinates and reorienting the axis
\begin{eqnarray*}
	\Omega\cap B_{R}(Q)=\{x=(x\coorduno,x\coorddue)\in B_{R}(Q)\ : \ x\coorddue>\rho(x\coorduno) \}\\
	\partial\Omega\cap B_{R}(Q)=\{x=(x\coorduno,x\coorddue)\in B_{R}(Q)\ : \ x\coorddue=\rho(x\coorduno) \}.
\end{eqnarray*}
Furthermore, up to a suitable translation of the axis we can assume that 
\[Q=(0,0)\]
so that \[\rho(0)=0\] and, up to a suitable 
rotation of the axis, we can also  assume that 
\[\rho'(0)=0.\]
We define the map
\begin{equation}\label{changement}
y=\Psi (x)
\end{equation} defined by
\[\left\{\begin{array}{ll}
&y\coorduno=x\coorduno\\
&y\coorddue=x\coorddue-\rho(x\coorduno)
\end{array}\right.\]

then $\Psi$ is one-to-one and $det (D{\Psi})=1$. Note that $\Psi$ is a $C^{2}$ function which maps $\overline\Omega\cap  B_{R}(Q)$ into a subset of the the half-plane, more precisely  
\begin{eqnarray*}\Omega_Q:=\Psi(\Omega\cap  B_{R}(Q))\subset\{y=(y\coorduno,y\coorddue)\ : y\coorddue>0\}\\
	\partial^F \Omega_Q:=\Psi(\partial \Omega\cap  B_{R}(Q))\subset\{y=(y\coorduno,y\coorddue)\ : y\coorddue=0\}
\end{eqnarray*} 
and the point $Q=0$ is mapped to the origin.

\

We define
\begin{equation}\label{defutilde}
\widetilde u_{p}(y):=u_{p}(\Psi^{-1} (y)), \quad\mbox{for all }y\in \Omega_{Q}\cup \partial^F \Omega_{Q}
\end{equation}
then (see \cite{F})
\[\left\{
\begin{array}{lr}
\Delta \widetilde u_{p}- \widetilde u_{p}- 2\rho' (y\coorduno)\frac{\partial^{2}\widetilde u_{p}}{\partial y\coorduno\partial y\coorddue}-\rho''(y\coorduno)\frac{\partial\widetilde u_{p}}{\partial y\coorddue}+(\rho'(y\coorduno))^{2}\frac{\partial^{2}\widetilde u_{p}}{\partial (y\coorddue)^{2}}=0&\qquad\mbox{ in }\Omega_{Q}
\\
\frac{\partial\widetilde u_{p}}{\partial y^{2}}\left[-1+\rho'(y\coorduno)-(\rho'(y\coorduno))^{2} \right]=\widetilde u_{p}^p &\qquad\mbox{ in }\partial^F \Omega_{Q}\end{array}
\right.\]

\

Let $x_{p}\in\overline\Omega$  be a family of points  such that $Q=\lim_{p\rightarrow +\infty}x_{p}$ and     $\mu_{p}:= \Big(pu_p(x_{p})^{p-1}\Big)^{-1}\rightarrow 0$.
\

Hence for $p$ large $x_{p}\in B_{R}(Q)$ and so  the point \[ q_{p}:=\Psi(x_{p})\in \Omega_{Q}\cup\partial^F \Omega_{Q}\]
 is well defined.

We scale $\widetilde u_{p}$ around $q_p$, setting
\[z_{p}(t):=\frac{p}{\widetilde u_{p}(q_p)}\left(\widetilde u_{p}(q_p+\mu_{p}t)-\widetilde u_{p}(q_p) \right), \quad\mbox{ for } t\in T_{Q,p}\cup \partial^F T_{Q,p}\]
where \begin{eqnarray}&& T_{Q,p}:=\{t=(t\coorduno,t\coorddue)\in\mathbb R^{2}\ : \  q_p+\mu_{p}t\ \in\Omega_{Q}\}\label{Tp}\\
&&\partial^F T_{Q,p}:=\{t=(t\coorduno,t\coorddue)\in\mathbb R^{2}\ :\   q_p+\mu_{p}t\ \in\partial^F \Omega_{Q}\}\label{boundaryTp}\end{eqnarray}

Let us observe that we can choose $\bar R>0$ such that 
\begin{equation}
\label{RestringoInsiemeRiscalate}
B_{\frac{\bar R}{\mu_{p}}}(0)\cap\{t\ : \ t\coorddue > -\frac{q_p\coorddue}{\mu_{p}}\}\subset T_{Q,p} \end{equation}
and 
\begin{equation}
\label{RestringoInsiemeRiscalate2}
B_{\frac{\bar R}{\mu_{p}}}(0)\cap\{t\ : \ t\coorddue = -\frac{q_p\coorddue}{\mu_{p}}\}\subset  \partial^F T_{Q,p}\end{equation}
Indeed, let us fix $\bar R>0$ so that 
\[
B^{+}_{2\bar R}(0)\subset \Omega_{Q}\qquad \left(\mbox{and }D_{2\bar R}(0):=B_{2\bar R}(0)\cap \{y\coorddue=0\}\subset \partial^F \Omega_{Q}\right).
\] 
Since $q_p\rightarrow 0$ (because $x_{p}\rightarrow Q$) then,  for $p$ large, 
$|q_p|\leq \bar R/2$ hence
\[
B_{\bar R}(q_p)\cap \{y\coorddue\geq 0\}\subset B_{2\bar R}(0)^{+}\cup D_{2\bar R}
(0).\]
\eqref{RestringoInsiemeRiscalate} and \eqref{RestringoInsiemeRiscalate2}  follow observing that
\[q_p+\mu_{p}t\in B_{\bar R}(q_p)\cap \{y\coorddue\geq 0\}\ \Leftrightarrow \ \left\{\begin{array}{lr} 
|t|\leq \frac{\bar R}{\mu_{p}}\\
t\coorddue\geq -\frac{q_p\coorddue}{\mu_{p}}\\
\end{array}
\right..\] 
The function $z_p$ satisfies
\[\left\{
\begin{array}{lr}
L_{p} z_p-(\mu_{p})^2 z_p=p(\mu_{p})^2 &\mbox{ in }B_{\frac{\bar R}{\mu_{p}}}(0)\cap\{t\ : \ t\coorddue > -\frac{q_p\coorddue}{\mu_{p}}\} \\
N_{p} z_p=\left(1+\frac{z_p}{p} \right)^{p} &\mbox{ in }B_{\frac{\bar R}{\mu_{p}}}(0)\cap\{t\ : \ t\coorddue= -\frac{q_p\coorddue}{\mu_{p}}\}
\end{array}\right.
\]
where, since $\rho'(0)=0$ and $\rho''$ is continuous: \[L_{p}:=\Delta-2\rho'(q_{p}\coorduno+\mu_{p}t\coorduno)
\frac{\partial^2}{\partial t\coorduno\partial t\coorddue}-\mu_{p}\rho''(q_{p}\coorduno+\mu_{p}t\coorduno)
\frac{\partial}{\partial t\coorddue} 
+
[\rho'(q_{p}\coorduno+\mu_{p}t\coorduno)]^2\frac{\partial^2}{\partial (t\coorddue)^2}\ \underset{p\rightarrow +\infty}{\longrightarrow} \ \Delta\]
and
\[N_{p}:=\left[-1+\rho'(q_{p}\coorduno+\mu_{p}t\coorduno)-[\rho'(q_{p}\coorduno+\mu_{p}t\coorduno)]^{2} \right]\frac{\partial}{\partial t\coorddue }\ \underset{p\rightarrow +\infty}{\longrightarrow}\ \frac{\partial}{\partial\nu}.\] 
Thanks to these convergences one can restrict to consider the case when $\partial \Omega$ is flat near $Q$, since the same arguments adapt to the non-flat case   (see for instance \cite{F, Castro1}).
   \

\section{Exhaustion of concentration points}\label{sectionGeneralAsy}

Given a family $(u_p)$ of solutions of \eqref{problem1}, for a sequence $p_n\rightarrow +\infty$ we 
define the {\it concentration set} $\widetilde{\mathcal{S}}$ of $u_{p_n}$ as 
\begin{equation}\label{S}
\widetilde{\mathcal{S}}:=\left\{ x\in\overline\Omega\ : \ \exists \,(x_n)_n\in\overline\Omega,\ x_n\rightarrow x, \mbox{ with }\ p_nu^{p_n-1}_{p_n}(x_n)\rightarrow +\infty\right\}\subset\overline{\Omega}.
\end{equation}
Clearly  
\begin{equation}\label{FirstInclusion}
\widetilde{\mathcal{S}}\subseteq \mathcal S =\{\bar x_{1},\ldots,\bar x_{m}\}\ (\subset\partial\Omega),
\end{equation}
where   $\mathcal S$ is the blow-up set of the sequence $p_{n}u_{p_{n}}$ (see \eqref{def:blowupset} for the definition) characterized in \cite{F} (see Theorem \ref{provaI} in the Introduction). Indeed by the definition of $\widetilde{\mathcal{S}}$, for any $x\in \widetilde{\mathcal{S}}$  
there exists a sequence $x_{n}\in\overline\Omega,\ x_{n}\rightarrow x$ such that \[
p_{n}u_{p_{n}}^{p_{n}-1}(x_{n})\rightarrow +\infty,
\] 
then clearly \[p_n u_{p_n}(x_{n})\rightarrow +\infty,\]  hence,  by the definition of $\mathcal S$, $x\in\mathcal S$ and \eqref{FirstInclusion} is proved.\\ Let us also note that \[\liminf_{n\rightarrow \infty} u_{p_n}(x_{n})\geq 1.\]
\\
As a consequence, up to reordering the points in $\mathcal S$, there exists $N\leq m$ such that 
\begin{equation}
\label{PrimaCaratterizzazioneSTilde}
\widetilde{\mathcal{S}}=\{\bar x_{1},\ldots, \bar x_{N}\}.
\end{equation}

In this section we will prove the existence of a maximal number $k$ of \textit{concentrating sequences} $x_n$ for the set $\widetilde{\mathcal S}$, satisfying specific properties, in particular we get pointwise estimates and a description of $u_p$ close to the points of $\widetilde{\mathcal S}$. The main result is contained in Proposition \ref{thm:x1N} below.

\

We introduce some notation. For  $l\in\N\setminus\{0\}$ families of
points $(\xip)_p\subset\overline\Omega$, $i=1,\ldots,l$, such that
\begin{equation}
\label{muVaAZero} p\upp^{p-1}(\xip)\to+\infty\ \mbox{ as }\
p\to+\infty,
\end{equation}
we define the parameters 
\begin{equation}\label{mip} \mip:=\left(p
\upp^{p-1}(\xip)\right)^{-1}\ \ (\rightarrow 0,\ \mbox{ as }p\rightarrow +\infty), 
\end{equation} 
and introduce the following properties:
\begin{itemize}
\item[$(\mathcal{P}_1^l)$] For any $i,j\in\{1,\ldots,l\}$, $i\neq j$,
\[
\lim_{p\to+\infty}\fr{|\xip-\xjp|}{\mip}=+\infty.
\]
\item[$(\mathcal{P}_2^l)$] For any $i\in\{1,\ldots,l\}$,
\[
\lim_{p\to+\infty}\fr{dist(\xip,\partial \Omega)}{\mip}= 0.
\]
\item[$(\mathcal{P}_3^l)$]
 For any $i=1,\ldots,l$, let $Q_i\in\partial\Omega$ be such that \[Q_i:=\lim_{p\rightarrow +\infty} x_{i,p},\] let $\Psi_i$ be the change of coordinates which straightens $\partial\Omega$ in a neighborhood of $Q_i$ of radius $R_i>0$, let $q_{i,p}=\Psi_i(x_{i,p})$ and let  
\begin{equation}\label{vipdef}
z_{i,p}(t):=\fr{p}{\upp(\xip)}\bigg(\upp\big(\Psi_i^{-1}( q_{i,p}+\mip
t)\big)-\upp(\xip)\bigg)\quad\text{for $t\in T_{i,p}\cup \partial^F T_{i,p}$,}
\end{equation}
where $T_{i,p}:=T_{Q_i,p}$, see \eqref{Tp}-\eqref{boundaryTp} in Section \ref{notations} for the notations.\\
Then 
\begin{equation}\label{vip}
T_{i,p}\cup \partial^F T_{i,p}\to \overline{\R^2_+}\qquad\text{and}\quad z_{i,p}(t)\longrightarrow U(t)\qquad\text{in $C^1_{loc}(\overline{\R^2_+})$ as $p\to+\infty$,}
\end{equation} where $U$ is the function in \eqref{v0}.
\item[$(\mathcal{P}_4^l)$] There exists $C>0$ such that
\[
p R_{l,p}(x) \upp^{p-1}(x)\leq C
\]
for all $p>1$ and all $x\in \overline{\Omega}$.
where $R_{l,p}$ is the function 
\begin{equation}\label{RNp} R_{l,p}(x):=\min_{i=1,\ldots,l}
|x-\xip|, \ \forall x\in\overline{\Omega}.
\end{equation}
\end{itemize}

\begin{remark}
	\label{boundaryconcpoints}
	If we assume that there exist $l\in\N\setminus\{0\}$ families of
	points $(\xip)_p\subset\overline\Omega$, $i=1,\ldots,l$ which satisfies \eqref{muVaAZero} 
	and such  that property  $(\mathcal{P}_4^l)$ holds true,
	then it is clear that the concentration set defined in \eqref{S} reduces to \[
	\widetilde{\mathcal{S}}=\left\{\lim_{p\to+\infty}\xip,\,i=1,\ldots,l\right\}.
	\] 
\end{remark}

\begin{lemma}\label{lemma:BoundEnergiaBassino}
If there exists  $l\in\N\setminus\{0\}$ such that the properties
$(\mathcal{P}_1^l)$, $(\mathcal{P}_2^l)$ and $(\mathcal{P}_3^l)$
hold for families $(\xip)_{i=1,\ldots,l}$ of points satisfying
\eqref{muVaAZero}, then
\[
p\int_\Omega (|\na\upp|^2+u_p^2)\,dx\geq2\pi\sum_{i=1}^l
\alpha_i^2+o_p(1)\ \mbox{ as }p\rightarrow +\infty,
\]
where  $\alpha_i:=\liminf_{p\to+\infty}\upp(\xip)\
(\geq 1,\ \mbox{ by \eqref{muVaAZero}})$.

\end{lemma}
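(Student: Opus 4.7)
The plan is first to reduce the energy integral to a boundary integral via the Neumann condition, then localize around each concentrating sequence on the scale $\mu_{i,p}$, and finally pass to the limit using the profile convergence $z_{i,p}\to U$ of $(\mathcal{P}_3^l)$. Multiplying the equation $-\Delta u_p+u_p=0$ by $u_p$, integrating over $\Omega$ and using the boundary condition gives
\[
p\int_\Omega(|\nabla u_p|^2+u_p^2)\,dx \;=\; p\int_{\partial\Omega}u_p^{p+1}\,d\sigma,
\]
so the task reduces to proving $\liminf_{p\to+\infty}p\int_{\partial\Omega}u_p^{p+1}\,d\sigma\geq 2\pi\sum_{i=1}^l\alpha_i^2$.

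For the localization, fix $R>0$. Property $(\mathcal{P}_1^l)$, applied symmetrically in $i,j$, gives $|x_{i,p}-x_{j,p}|/(\mu_{i,p}+\mu_{j,p})\to+\infty$, so the balls $B_{R\mu_{i,p}}(x_{i,p})$ are pairwise disjoint for $p$ large (even when some of the limits $Q_i$ coincide). Hence
\[
p\int_{\partial\Omega}u_p^{p+1}\,d\sigma \;\geq\; \sum_{i=1}^l p\int_{B_{R\mu_{i,p}}(x_{i,p})\cap\partial\Omega}u_p^{p+1}\,d\sigma.
\]
For each $i$, I pass to the straightened coordinates $y=\Psi_i(x)$ (whose Jacobian is $1$) and rescale $y=q_{i,p}+\mu_{i,p}t$. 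The boundary arclength becomes $d\sigma=\sqrt{1+\rho_i'(y^1)^2}\,dy^1$, and the identity $p\mu_{i,p}u_p(x_{i,p})^{p+1}=u_p(x_{i,p})^2$, combined with $\widetilde u_p(q_{i,p}+\mu_{i,p}t)=u_p(x_{i,p})(1+z_{i,p}(t)/p)$, yields
\[
p\int_{B_{R\mu_{i,p}}(x_{i,p})\cap\partial\Omega}\!u_p^{p+1}\,d\sigma = u_p(x_{i,p})^2\!\int_{A_{i,p,R}}\!\Big(1+\tfrac{z_{i,p}(t^1,-q_{i,p}^2/\mu_{i,p})}{p}\Big)^{p+1}\!\sqrt{1+\rho_i'(q_{i,p}^1+\mu_{i,p}t^1)^2}\,dt^1,
\]
where $A_{i,p,R}\subset\mathbb R$ is the image of the boundary arc in the rescaled variable $t^1$.

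To pass to the limit, use $(\mathcal{P}_2^l)$: $-q_{i,p}^2/\mu_{i,p}\to 0$, so the evaluation point approaches $(t^1,0)$ and $A_{i,p,R}$ exhausts $(-R,R)$. By $(\mathcal{P}_3^l)$, $z_{i,p}\to U$ in $C^1_{loc}(\overline{\mathbb R^2_+})$, whence $(1+z_{i,p}/p)^{p+1}\to e^U$ uniformly on compact subsets; since $\rho_i'(0)=0$, the Jacobian factor tends to $1$. Dominated convergence on the bounded interval $(-R,R)$ therefore implies, after extracting the subsequence along which $u_p(x_{i,p})\to\alpha_i$,
\[
\liminf_{p\to+\infty}p\int_{B_{R\mu_{i,p}}(x_{i,p})\cap\partial\Omega}u_p^{p+1}\,d\sigma \;\geq\; \alpha_i^2\int_{-R}^{R}e^{U(t^1,0)}\,dt^1.
\]
Summing in $i$ (using subadditivity of $\liminf$ for nonnegative integrands) and letting $R\to+\infty$, together with $\int_{\mathbb R}e^{U(t,0)}\,dt=\int_{\mathbb R}\frac{4}{t^2+4}\,dt=2\pi$ (equivalently the Liouville mass normalization $\int_{\partial\mathbb R^2_+}e^U=2\pi$), completes the proof.

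The only delicate point I expect is the interplay between the outer $\liminf$ and the extraction of subsequences $u_p(x_{i,p})\to\alpha_i$ for several $i$ simultaneously, which is handled by standard diagonal extraction exploiting the nonnegativity of the localized integrals; once this is in place, the remainder is a routine change of variables followed by Fatou/dominated convergence.
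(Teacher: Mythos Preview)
Your proposal is correct and follows essentially the same route as the paper: reduce to the boundary integral $p\int_{\partial\Omega}u_p^{p+1}\,d\sigma$, localize on disjoint balls $B_{R\mu_{i,p}}(x_{i,p})$ via $(\mathcal P_1^l)$, straighten and rescale, then pass to the limit using $(\mathcal P_2^l)$ and $(\mathcal P_3^l)$ to recover $\alpha_i^2\int e^U$ and finally send $R\to+\infty$. Two small remarks: the paper uses Fatou's lemma together with the inequality $\liminf(a_n+b_n)\ge\liminf a_n+\liminf b_n$ (this is \emph{super}additivity, not subadditivity as you wrote), which sidesteps the subsequence extraction you flag at the end; and the paper makes the set inclusion precise by showing $B_{R\mu_{i,p}/3}(q_{i,p})\cap\{y^2=0\}\subset\Psi_i(B_{R\mu_{i,p}}(x_{i,p})\cap\partial\Omega)$, which is the rigorous version of your claim that ``$A_{i,p,R}$ exhausts $(-R,R)$''.
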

\begin{proof}
	Let us fix $i\in\{1,\ldots,l\}$. 	Since $\lim_{p\to+\infty} x_{i,p}=Q_i\in\partial\Omega$ and $\lim_{p\to+\infty}\mu_{i,p}=0$, then for any $R>0$ and $p$ sufficiently large $B_{R\mu_{i,p}}(x_{i,p})\cap\partial\Omega\subset B_{\delta_i}(Q_i)$.\\
	As in Subsection \ref{sectionchange} we can assume w.l.o.g. that $Q_{i}=0$.
	We claim that
	\begin{equation}\label{claim}
	(B_{\tfrac {R\mu_{i,p}}3}( q_{i,p})\cap \{y\coorddue=0\})\subset \Psi(B_{R\mu_{i,p}}(x_{i,p})\cap \partial \Omega),
	\end{equation}
	where $q_{i,p}:=\Psi (x_{i,p})$.
Given $\Psi$ as in	\eqref{changement}, it follows that $\Psi^{-1}$ is a $C^2$ function in a neighbourhood of $(0,0)=\Psi(Q_i)$ furthermore 
$D\Psi^{-1}(0,0)=I$. Thus
\begin{equation}
	\label{picche0}
	\exists\,\delta>0\quad\text{such that}\quad 
	\|D\Psi^{-1}(y)\|\leq 3
	\quad\forall\, y\in \overline{B^+_{\delta}(0,0)}.
	\end{equation}
Since $\lim_{p\rightarrow +\infty }q_{i,p}=\Psi(Q_i)=(0,0)$, then for $p$ sufficiently large we have
\[(B_{\tfrac {R\mu_{i,p}}3}( q_{i,p})\cap \{y\coorddue=0\})\subset \overline{B^+_{\delta}(0,0)}.\]
Let $y=(y\coorduno,0)\in(B_{\tfrac {R\mu_{i,p}}3}(q_{i,p})\cap \{y\coorddue=0\})$ then $y=\Psi(x)$ where 
\begin{eqnarray*}
|x-x_{i,p}|&=&|\Psi^{-1}(y)-\Psi^{-1}(q_{i,p})|
\\
&\leq& \sup_{\overline{B^+_{\delta}(0,0)}}\|D\Psi^-1\||y-q_{i,p}|
\\
&\overset{\eqref{picche0}}{\leq} & R\mu_{i,p}.
\end{eqnarray*}
This proves \eqref{claim}.\\
		Let us write, for any $R>0$, recalling the definition of $\widetilde u_p$ in \eqref{defutilde}
	\begin{eqnarray}\label{appo1}
	p\int_{B_{R\mu_{i,p}}(\xip)\cap\partial \Omega}
	\upp^{p+1}\,d\sigma(x) &\geq&p\int_{\Psi_i(B_{R\mu_{i,p}}(\xip)\cap\partial
		\Omega)}
	\widetilde{u}^{p+1}_p(y)\,d\sigma(y)
 \nonumber\\
	&\stackrel{\eqref{claim}}\geq&p\int_{B_{\tfrac {R\mu_{i,p}}3}(q_{i,p})\cap \{y\coorddue=0\}}
	\widetilde{u}_p^{p+1}(y)\,d\sigma(y)
 \nonumber \\
	&\geq&p\mu_{i,p}\int_{B_{\tfrac {R}3}(0)\cap \{t\coorddue=-\frac{q_{i,p}\coorddue}{\mu_{i,p}}\}}
	\widetilde{u}^{p+1}_p(q_{i,p}+\mip t)\,dt\coorduno 
\nonumber\\
	&
	\stackrel{\eqref{mip}}\geq&\upp(\xip)^2\int_{B_{\tfrac {R}3}(0)\cap \{t\coorddue=-\frac{q_{i,p}\coorddue}{\mu_{i,p}}\}}\fr{\widetilde{u}^{p+1}_{p}(q_{i,p}+\mip
		t)}{\upp^{p+1}(\xip)}
	\,dt\coorduno \nonumber\\
	&\geq& \upp(\xip)^2\int_{B_{\tfrac {R}3}(0)\cap \{t\coorddue=-\frac{q_{i,p}\coorddue}{\mu_{i,p}}\}}\left(1+\fr{z_{i,p}(t)}{p}\right)^{p+1}\,dt\coorduno .
	\end{eqnarray}
	Thanks to $(\mathcal{P}_3^l)$, we have \bel\label{appo2}
	\|z_{i,p}-U\|_{L^{\infty}(\overline{B_{R/3}^+})}=o_p(1)\ \mbox{ as
	}p\rightarrow +\infty. \eel Thus by \eqref{appo1}, \eqref{appo2} and
	Fatou's lemma \bel\label{appo3}
	\liminf_{p\to+\infty}\,\left(p\int_{B_{R\mu_{i,p}}(\xip)\cap
		\partial \Omega}\upp^{p+1}\,d\sigma(x)\right)\geq\alpha^2_i\int_{B_{R/3}(0)\cap\{t\coorddue=0\}} e^{U(t)}\,
	dt\coorduno . \eel
	
	Moreover by virtue of $(\mathcal{P}_1^l)$ it is not hard to see that
	$B_{R\mip}(\xip)\cap B_{R\mu_{j,p}}(x_{j,p})=\emptyset$ for all
	$i\neq j$. Hence, in particular, thanks to \eqref{appo3}
	\[
	\liminf_{p\to+\infty}\,\left(p\int_{\partial\Omega}
	\upp^{p+1}\,d\sigma(x)\right)\geq \sum_{i=1}^l
	\left(\alpha^2_i\int_{B_{R/3}(0)\cap\{t\coorddue=0\}} e^{U(t)}\, dt\coorduno \right).
	\]
	At last, since this holds for any $R>0$, we get
	\[
	p\int_\Omega(|\na\upp|^2+u_p^2)\,dx=p\int_{\partial\Omega}
	\upp^{p+1}\,d\sigma(x)\geq \sum_{i=1}^l \alpha_i^2
	\int_{\partial\R^2_+}e^{U(t)}\,dt\coorduno  +o(1)=2\pi\sum_{i=1}^l
	\alpha_i^2+o(1),  \] as
	$p\rightarrow +\infty$.
\end{proof}

\

Using an exhaustion method, we establish the existence of a maximal number $k$ of
``bubbles'' $U$ appearing about the points of the boundary subset $\widetilde S$.
\begin{proposition}\label{thm:x1N}
Let $(\upp)$ be a family of solutions to \eqref{problem1} and assume
that \eqref{energylimit} holds. Then after passing to a subsequence $p_{n}\rightarrow +\infty$ as $n\rightarrow \infty$, there exist an integer 
$k\geq 1$ and $k$ families of points $(x_{i,p_{n}})$ in
$\overline{\Omega}$  $i=1,\ldots, k$ such that $(\mathcal{P}_1^k)$, $(\mathcal{P}_2^k)$,
$(\mathcal{P}_3^k)$ and $(\mathcal{P}_4^k)$ hold. Moreover given any family of points $x_{k+1,p_{n}}$, it is
impossible to extract a new sequence from the previous one such that
$(\mathcal{P}_1^{k+1})$, $(\mathcal{P}_2^{k+1})$,
$(\mathcal{P}_3^{k+1})$ and $(\mathcal{P}_4^{k+1})$ hold with the
sequences $(x_{i,p_{n}})$, $i=1,\ldots,k+1$. 
Furthermore, there exists  $N\leq\min\{m, k\}$ (where $m$ is the number of points of the set $\mathcal S$) such that, up to reordering the points $\bar x_{i}\in\mathcal S$, it holds
 \begin{equation}
\label{PrimaCaratterizzazioneSTildeN}\widetilde{\mathcal{S}}=\left\{\lim_{n\to+\infty}x_{i, p_n},\,i=1,\ldots,k\right\}=\{\bar x_{1},\ldots, \bar x_{N}\},
\end{equation}
where $\widetilde{\mathcal S}$ is the concentration set defined in \eqref{S}.
\end{proposition}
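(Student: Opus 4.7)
I would prove the proposition by an exhaustion argument, constructing the sequences $(x_{i,p_n})$ inductively and invoking Lemma \ref{lemma:BoundEnergiaBassino} together with \eqref{energylimit} to guarantee termination after finitely many steps.

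\textit{Base step.} Pick $x_{1,p_n}$ to be a global maximum point of $u_{p_n}$ on $\overline{\Omega}$. I would first show $\mu_{1,p_n}\to 0$ by contradiction: if $p_n u_{p_n}^{p_n-1}\leq M$ uniformly, then the $H^1$-bound $p_n\int_\Omega(|\nabla u_{p_n}|^2+u_{p_n}^2)\,dx \to\beta$ and the trace embedding yield $\|u_{p_n}\|_{L^2(\partial\Omega)}^2\leq C\beta/p_n$, and consequently
\[
p_n\!\int_{\partial\Omega}\!\!u_{p_n}^{p_n+1}\,d\sigma = \int_{\partial\Omega}\!(p_n u_{p_n}^{p_n-1})\,u_{p_n}^{2}\,d\sigma \leq M\|u_{p_n}\|_{L^2(\partial\Omega)}^2 \to 0,
\]
contradicting the identity $p_n\int_{\partial\Omega}u_{p_n}^{p_n+1}\,d\sigma = p_n\int_\Omega(|\nabla u_{p_n}|^2+u_{p_n}^2)\,dx\to\beta>0$. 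By \eqref{boundSoluzio} and \eqref{uVaAZeroNeiCompatti}, a subsequence satisfies $x_{1,p_n}\to\bar x_1\in \mathcal{S}\subset\partial\Omega$. Applying the chart $\Psi_1$ (Subsection \ref{sectionchange}), the rescaled function $z_{1,p_n}$ is nonpositive by maximality, and elliptic regularity produces a $C^1_{loc}$-subsequential limit $W\leq 0$ with $W(0)=0$ on a limit domain $D\subset\overline{\R^2_+}$. If $\dist(x_{1,p_n},\partial\Omega)/\mu_{1,p_n}$ did not tend to $0$, $D$ would be either the full plane or a strict upper half-space $\{t\coorddue>h\}$ with $h>0$; in both cases $W$ would be bounded harmonic with zero Neumann data at $\partial D$, hence constant by Liouville, killing the boundary mass of the bubble and contradicting the matching of concentrated masses required by \eqref{boundEnergiap}. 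Thus $(\mathcal{P}_2^1)$ holds; $W$ solves \eqref{Liouvilleproblem} and uniqueness gives $W=U$, i.e.\ $(\mathcal{P}_3^1)$.

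\textit{Inductive step and termination.} Assume $(x_{i,p_n})_{i=1}^l$ satisfy $(\mathcal{P}_1^l)$--$(\mathcal{P}_3^l)$. If $(\mathcal{P}_4^l)$ is also verified, stop with $k=l$. Otherwise, up to a further subsequence, there exist $y_n\in\overline{\Omega}$ with $p_n R_{l,p_n}(y_n)\,u_{p_n}^{p_n-1}(y_n)\to+\infty$; choose $x_{l+1,p_n}$ so that
\[
R_{l,p_n}(x_{l+1,p_n})\,u_{p_n}^{p_n-1}(x_{l+1,p_n})\geq \tfrac12\sup_{\overline{\Omega}} R_{l,p_n}(x)\,u_{p_n}^{p_n-1}(x).
\]
Then \eqref{muVaAZero} and $(\mathcal{P}_1^{l+1})$ follow from the estimate $|x_{l+1,p_n}-x_{i,p_n}|/\mu_{l+1,p_n}\geq p_n R_{l,p_n}(x_{l+1,p_n})u_{p_n}^{p_n-1}(x_{l+1,p_n})\to+\infty$. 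The near-maximality combined with $(\mathcal{P}_1^{l+1})$ yields $u_{p_n}^{p_n-1}/u_{p_n}^{p_n-1}(x_{l+1,p_n})\leq 4$ on balls of fixed $\mu_{l+1,p_n}$-radius, hence $z_{l+1,p_n}\leq \log 4+o(1)$ and the base-step argument transfers verbatim, giving $(\mathcal{P}_2^{l+1})$ and $(\mathcal{P}_3^{l+1})$. By Lemma \ref{lemma:BoundEnergiaBassino} applied at stage $l$,
\[
\beta+o(1) = p_n\!\!\int_\Omega\!\!(|\nabla u_{p_n}|^2+u_{p_n}^2)\,dx \geq 2\pi\sum_{i=1}^l\alpha_i^2\geq 2\pi l,
\]
so the induction halts at some $k\leq\lfloor\beta/(2\pi)\rfloor$, and the maximality statement encodes precisely the fact that extracting a further $x_{k+1,p_n}$ satisfying $(\mathcal{P}_1^{k+1})$--$(\mathcal{P}_3^{k+1})$ is incompatible with $(\mathcal{P}_4^k)$.

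\textit{Identification of $\widetilde{\mathcal{S}}$ and main obstacle.} Once $(\mathcal{P}_4^k)$ holds, Remark \ref{boundaryconcpoints} yields $\widetilde{\mathcal{S}}=\{\lim_n x_{i,p_n}:i=1,\ldots,k\}$; combining with $\widetilde{\mathcal{S}}\subset\mathcal{S}$ from \eqref{FirstInclusion} and \eqref{PrimaCaratterizzazioneSTilde} produces the claimed $\widetilde{\mathcal{S}}=\{\bar x_1,\ldots,\bar x_N\}$ with $N\leq\min\{k,m\}$; the strict inequality $N<k$ is possible because $(\mathcal{P}_1)$ only separates sequences at the $\mu$-scale, so several bubbles may cluster at one boundary point. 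I expect the main obstacle to be the verification of $(\mathcal{P}_2)$ at each step and the accompanying identification of the profile with $U$: ruling out both the full-plane limit (when the base point lies deep in the interior on the $\mu$-scale) and the shifted-half-plane limit (when the rescaled distance tends to a positive number) requires a careful Liouville classification adapted to the Neumann nonlinearity, while the uniform upper bound on $z_{l+1,p_n}$ that enables compactness comes from the near-maximality of $R_{l,p_n}u_{p_n}^{p_n-1}$ in the exhaustion construction.
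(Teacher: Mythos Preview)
Your overall architecture---exhaustion with Lemma \ref{lemma:BoundEnergiaBassino} forcing termination---matches the paper exactly, and your identification of $(\mathcal{P}_2)$ as the crux is correct. However, the sketch you give for ruling out the two bad cases is not right, and this is where the real work lies.

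\textbf{The finite-$L$ case.} If $\dist(x_{l+1,p_n},\partial\Omega)/\mu_{l+1,p_n}\to L>0$, the rescaled limit domain is a half-plane and the limit function $\tilde U$ does \emph{not} satisfy zero Neumann data on its boundary: the nonlinear condition $\partial_\nu u=u^p$ rescales to $\partial_\nu \tilde U=e^{\tilde U}$ on $\partial\R^2_+$, with finite boundary mass (by Fatou and \eqref{energylimit}). So a plain Liouville-for-harmonic-functions argument does not apply. The paper instead invokes Liu's classification of solutions to \eqref{Liouvilleproblem}, which gives the explicit form $\tilde U(t)=\log\bigl(2\eta_2/((t\coorduno-\eta_1)^2+(t\coorddue+\eta_2)^2)\bigr)$. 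Since $\tilde U(0,L)=0$ and $\tilde U\leq 0$, one computes $\tilde U(\eta_1,0)>\tilde U(0,L)=0$, contradicting $\tilde U\leq 0$. Your ``zero Neumann data, hence constant'' reasoning does not capture this mechanism.

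\textbf{The $L=+\infty$ case.} Here the limit $V$ is harmonic on all of $\R^2$, $V\leq 0$, $V(0)=0$, so indeed $V\equiv 0$. But this alone is not a contradiction---there is no boundary and no ``boundary mass'' to account for. The paper obtains the contradiction via an auxiliary subharmonic function (after Guo--Liu):
\[
W_p(x)=\frac{p}{u_p^2(x_{l+1,p})}\int_{-s_0}^{s_0}u_p^{p+1}\bigl(x+(s,0)\bigr)\,ds,
\]
which is bounded on $\overline{B^+_{2s_0}(Q_{l+1})}$ by the maximum principle and \eqref{energylimit}, yet satisfies $W_p(x_{l+1,p})\geq \int_{-k}^{k}e^{V}\,dt=2k$ for arbitrary $k$ once $V\equiv 0$. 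Your appeal to \eqref{boundEnergiap} is too vague to substitute for this.

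A smaller point: for $(\mathcal{P}_1^{l+1})$ you verify $|x_{l+1,p_n}-x_{i,p_n}|/\mu_{l+1,p_n}\to\infty$, but the property also requires $|x_{l+1,p_n}-x_{i,p_n}|/\mu_{i,p_n}\to\infty$ for $i\leq l$. The paper obtains this separately, using $(\mathcal{P}_3^l)$ to show that a bounded ratio would force $p_n R_{l,p_n}(x_{l+1,p_n})u_{p_n}^{p_n-1}(x_{l+1,p_n})$ to stay bounded.
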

\begin{remark}
The point $x_{1,p_{n}}$ can be taken to be a maximum point of $u_{p_{n}}$ in $\overline\Omega$, hence it belongs to $\partial \Omega$, see STEP 1 below. The other sequences $x_{i,p}$, $i=2,\ldots, k$ are instead in $\overline\Omega$. \\
Observe also that the number $N$ of distinct points in $\widetilde{\mathcal{S}}$ satisfies $N\leq k$.
\end{remark}
\begin{proof}
For simplicity throughtout the proof we will denote any sequence $p_{n}\rightarrow +\infty$ as $n\rightarrow \infty$ simply by $p$.\\

\textit{STEP 1. We show that there exists a family $(x_{1,p})$ of
points in $\Omega$ such that, after passing to a sequence
$(\mathcal{P}^1_2)$ and $(\mathcal{P}^1_3)$ hold.\\}

Let us choose $x_{1,p}$ be a point in ${\overline{\Omega}}$ where
$\upp$ achieves its maximum. In \cite{F} it has been proved that $x_{1,p}\in\partial \Omega$ and that it satisfies  $(\mathcal{P}^1_3)$. 

\

\textit{STEP 2. We assume that $(\mathcal{P}_1^n)$, $(\mathcal{P}_2^n)$
and $(\mathcal{P}_3^n)$ hold for some $n\in\N\setminus\{0\}$. Then
we show that either $(\mathcal{P}_1^{n+1})$, $(\mathcal{P}_2^{n+1})$
and $(\mathcal{P}_3^{n+1})$ hold or $(\mathcal{P}_4^n)$ holds,
namely there exists $C>0$ such that
$$
p R_{n,p}(x) \upp^{p-1}(x)\leq C
$$
for all $x\in\Omega$, with $R_{n,p}$  defined as in \eqref{RNp}.\\
}

\

Let $n\in\N\setminus\{0\}$ and assume that $(\mathcal{P}_1^n)$,
$(\mathcal{P}_2^n)$ and $(\mathcal{P}_3^n)$ hold while
\bel\label{Pknonvale} \sup_{x\in\overline{\Omega}}\left(p R_{n,p}(x)
\upp^{p-1}(x)\right)\to+\infty\quad\textrm{as $p\to+\infty$}. \eel

We let $x_{n+1,p}\in\overline\Omega$ be such that \bel\label{xk+1} p
R_{n,p}(x_{n+1,p})
\upp^{p-1}(x_{n+1,p})=\sup_{x\in\overline{\Omega}}\left(p
R_{n,p}(x) \upp^{p-1}(x)\right). \eel By \eqref{Pknonvale},
\eqref{xk+1} and since $\Omega$ is bounded it is clear that
$$
p\upp^{p-1}(x_{n+1,p})\to+\infty\quad\textrm{as $p\to+\infty$}
$$
and \begin{equation}\label{geq1} \displaystyle\liminf _{p\rightarrow
+\infty}u_p(x_{n+1,p})\geq 1.
 \end{equation} 
 We will prove that 
 $(\mathcal{P}_1^{n+1})$, $(\mathcal{P}_2^{n+1})$ and $(\mathcal{P}_3^{n+1})$ hold with the added sequence $(x_{n+1,p})$.\\
 
 \textit{\underline{Proof of $(\mathcal{P}_1^{n+1})$}.}\\
 
 We first claim that \bel\label{claimxk+1}
\fr{|\xip-x_{n+1,p}|}{\mu_{i,p}}\to+\infty\quad\textrm{as
$p\to+\infty$} \eel for all $i=1,\ldots,n$ and $\mu_{i,p}$ as in
\eqref{mip}.\\
Let us assume by contradiction that there exists
$i\in\{1,\ldots,n\}$ such that $|\xip-x_{n+1,p}|/\mip\to R$ as
$p\to+\infty$ for some $R\geq0$. Then the points $\xip$ and
$x_{n+1,p}$ are close to each other and by virtue of $(\mathcal{P}_2^n)$,
they are very close to the boundary of $\Omega$. Let us denote
$q_{i,p}:=\Psi_i(\xip)$ and $q_{n+1,p}:=\Psi_i(x_{n+1,p})$ where $\Psi_i$ is
 the function defined by \eqref{changement} around the boundary point $Q_i:=\lim_{p\to+\infty}x_{i,p}$. Since $\Psi_i$ is
$C^2$, $(|q_{i,p}-q_{n+1,p}|/\mip)_p$ is bounded. Up to subsequence,
$|q_{i,p}-q_{n+1,p}|/\mip\to R'$ as $p\to+\infty$ for some $R'\geq0$.
Thanks to $(\mathcal{P}_3^n)$, we get
\begin{align*}
\lim_{p\to+\infty}p|\xip-x_{n+1,p}| \upp^{p-1}(x_{n+1,p})
&=\lim_{p\to+\infty} \fr{|\xip-x_{n+1,p}|}{\mu_{i,p}}\left(\frac{\upp(x_{n+1,p})}{\upp(\xip)}\right)^{p-1}\\
&=\lim_{p\to+\infty} \fr{|\xip-x_{n+1,p}|}{\mu_{i,p}}\left(\frac{\upp(\Psi_i^{-1}(q_{n+1,p}))}{\upp(\xip)}\right)^{p-1}
\\
&=\lim_{p\to+\infty} \fr{|\xip-x_{n+1,p}|}{\mu_{i,p}}\left(1+\frac{z_{i,p}(\mu_{i,p}^{-1}(q_{n+1,p}-q_{i,p}))}{p}\right)^{p-1}\\
&=\fr{4R}{(t\coorduno)^2+(t\coorddue+2)^2}<+\infty \ (\hbox{where} \
(t\coorduno)^2+(t\coorddue)^2=R'^2),
\end{align*} 
against {\eqref{Pknonvale} and} \eqref{xk+1}, thus \eqref{claimxk+1} holds.\\
Setting
\bel\label{mk+1} \mu_{n+1,p}:=\left[p
\upp^{p-1}(x_{n+1,p})\right]^{-1}\rightarrow 0 \hbox{ as
}p\to+\infty, \eel by \eqref{Pknonvale} and \eqref{xk+1} we deduce
that \bel\label{Rkmk+1}
\fr{R_{n,p}(x_{n+1,p})}{\mu_{n+1,p}}\to+\infty\quad\textrm{as
$p\to+\infty$.} \eel
Then \eqref{claimxk+1}, \eqref{Rkmk+1} and $(\mathcal{P}_1^n)$ imply that $(\mathcal{P}_1^{n+1})$ holds with the added sequence $(x_{n+1,p})$.\\

\textit{\underline{Proof of $(\mathcal{P}_2^{n+1})$}.}\\
Let us prove that for any $S>0$
\begin{equation}\label{quotientbounded}
\sup_{B_{S\mu_{n+1,p}}(x_{n+1,p})\cap\overline\Omega}\frac{u_p(x)}{u_p(x_{n+1,p})}\leq
1+O(\frac{S\mu_{n+1,p}}{(p-1)R_{n,p}(x_{n+1,p})}).
\end{equation}
Let $x\in B_{S\mu_{n+1,p}}(x_{n+1,p})\cap\overline\Omega$,  since $x_{n+1,p}$ satisfies \eqref{xk+1},
$${R_{n,p}(x)}u_p(x)^{p-1}\leq {R_{n,p}(x_{n+1,p})} u_p^{p-1}(x_{n+1,p}).$$ 
Furthermore $|x-x_{n+1,p}|\leq
S\mu_{n+1,p}$, thus
\begin{eqnarray*}
	R_{n,p}(x)&\geq&\min_{i=1,\ldots,n}|x_{n+1,p}-\xip|-|x-x_{n+1,p}|\\
	&\geq&R_{n,p}(x_{n+1,p})-S\mu_{n+1,p}.
\end{eqnarray*}
Then, since for $p$ large by \eqref{Rkmk+1}, $R_{n,p}(x_{n+1,p})-S\mu_{n+1,p}>0$
\begin{align*}
u_p^{p-1}(x)&\leq
\frac{R_{n,p}(x_{n+1,p})}{R_{n,p}(x_{n+1,p})-S\mu_{n+1,p}}
u_p^{p-1}(x_{n+1,p})\\
&\leq \frac{1}{1-\frac{S}{R_{n,p}(x_{n+1,p})}\mu_{n+1,p}}
u_p^{p-1}(x_{n+1,p})\\
&\leq \big(1+O(\frac{S\mu_{n+1,p}}{R_{n,p}(x_{n+1,p})})\big)
u_p^{p-1}(x_{n+1,p}),
\end{align*}
thus \eqref{quotientbounded} is proved.

\

Let us now introduce the rescaled function
\begin{equation}\label{cuori}
v_{n+1,p}(t):=\frac{p}{u_p(x_{n+1,p})}[u_p(x_{n+1,p}+\mu_{n+1,p}t)-u_p(x_{n+1,p})], \forall t\in
\widetilde{\Omega}_{n+1,p}:=\mu_{n+1,p}^{-1}(\Omega-x_{n+1,p}).  
\end{equation}

Observe that by definition for  $t\in\widetilde{\Omega}_{n+1,p}\cap B_S(0)$
\begin{equation}\label{newv} v_{n+1,p}(t)=p\left(\frac{u_p(x)}{u_p(x_{n+1,p})} -1\right),
\end{equation}
where $x:=x_{n+1,p}+\mu_{n+1,p} t\in \Omega\cap B_{S\mu_{n+1,p}}(x_{n+1,p})$, hence  by  
\eqref{quotientbounded}
and 
\eqref{Rkmk+1} 
 it follows that
 for any $S>0$ one has
\begin{equation}
\label{**}
\limsup_{p\to+\infty}\sup_{\widetilde{\Omega}_{n+1,p}\cap B_S(0)}v_{n+1,p}\leq 0.
\end{equation}

\

Next to show $(\mathcal{P}_2^n)$ we argue by contradiction assuming that $\lim_{p\rightarrow+\infty} dist(x_{n+1,p}, \partial \Omega)\mu^{-1}_{n+1,p}\neq 0$. Up to a subsequence two cases may
occur:
\begin{enumerate}
	\item $dist(x_{n+1,p},\partial \Omega)\mu^{-1}_{n+1,p}\longrightarrow
	L>0$,
	\item $dist(x_{n+1,p},\partial \Omega)\mu^{-1}_{n+1,p}\longrightarrow +\infty$.
\end{enumerate}

\underline{Case $(1)$.}  Let us start by the first case. We have $x_{n+1,p}\longrightarrow
Q_{n+1}\in \partial \Omega$. We may assume without loss
of generality that the unit outward normal to $\partial \Omega$ at
$Q_{n+1}$ is $-e\coorddue$, where $e\coorddue$ is the second element of the
canonical basis of $\mathbb{R}^2$, and that $\partial \Omega$ is contained in
the hyperplane $x\coorddue=0$. For simplicity we will also assume that
$\partial \Omega$ is flat near $Q_{n+1}$, we point out that all our arguments can be adapted to the non-flat case considering the change of coordinates which straightens out $\partial\Omega$ near $Q_{n+1}$, introduced in Section \ref{sectionchange}
(see for instance \cite[Theorem 3]{Castro1}). The flatness
assumption means that the function $\Psi_{n+1}$ in \eqref{changement} is the identity, namely that there exists $R:=R_{n+1} > 0$ such that
$\Omega\cap B^+_R(Q_{n+1})
= B^+_R(Q_{n+1})$ and $\partial\Omega\cap \partial B^+_R(Q_{n+1})
= D_R(Q_{n+1})$. \\
In particular, for $p$ large one has that 
$x\coorddue_{n+1,p}=dist(x_{n+1,p},\partial \Omega)$, so that by assumption
\begin{equation}\label{riscrivoCaseL}
\frac{x\coorddue_{n+1,p}}{\mu_{n+1,p}}\longrightarrow L
\end{equation}
as $p\rightarrow +\infty$.

Let us project $x_{n+1,p}$ on  the boundary defining the point $\hat x_{n+1,p}:= (x_{n+1,p}\coorduno,0) (\in\partial\Omega)$,  and let us set
\begin{eqnarray}\label{cuoris}
s_{n+1,p}(t) &:=&  v_{n+1,p}\Big(t\coorduno, t\coorddue-\frac{x\coorddue_{n+1,p}}{\mu_{n+1,p}}\Big)
,\qquad \forall t\in
\widehat{\Omega}_{n+1,p}
:=\mu_{n+1,p}^{-1}(\Omega-\hat x_{n+1,p})
\nonumber\\
&\overset{\eqref{cuori}}{=}&\frac{p}{u_p(x_{n+1,p})}[u_p(\hat x_{n+1,p}+\mu_{n+1,p}t)-u_p(x_{n+1,p})]. 
\end{eqnarray}
We can choose $\delta>0$ such that
$B_{\delta}^{+}(\hat x_{n+1,p})\subset B^+_R(Q_{n+1})$, 
hence
\[
B_{\frac{\delta}{\mu_{n+1,p}}}^{+}(0)\subset \widehat{\Omega}_{n+1,p} \quad\mbox{ and }\quad D_{\frac{\delta}{\mu_{n+1,p}}}(0)\subset \partial\widehat{\Omega}_{n+1,p} 
\]
and, by \eqref{problem1}, the rescaled function $s_{n+1,p}$ solves the system
\begin{equation}\label{systemsn+1,p}
\left\{\begin{array}{ll} -\Delta s_{n+1,p} + \mu_{n+1,p}^2s_{n+1,p}
=-\mu_{n+1,p}^2p & \qquad \hbox{in }B^{+}_{\frac{\delta}{\mu_{n+1,p}}}(0),
\\
\displaystyle\frac{\partial s_{n+1,p}}{\partial
	\nu}=\left(1+\frac{s_{n+1,p}}{p}\right)^{p}& \qquad\hbox{on
}D_{\frac{\delta}{\mu_{n+1,p}}}(0), 
\end{array}
\right.
\end{equation}
furthermore  for any $\sigma>0$, by \eqref{riscrivoCaseL}, there exists $S>0$ such that $B^{+}_{\sigma}(0)\subset B_{S}(0,\frac{x\coorddue_{n+1,p}}{\mu_{n+1,p}})\cap \widehat\Omega_{n+1,p}$, then
\begin{eqnarray}
\label{**persn}
\limsup_{p\to+\infty}\sup_{B^{+}_{\sigma}(0)}s_{n+1,p}(t)&\leq &
\limsup_{p\to+\infty}\sup_{B_{S}(0,\frac{x\coorddue_{n+1,p}}{\mu_{n+1,p}})\cap \widehat\Omega_{n+1,p}}v_{n+1,p}(t\coorduno,t\coorddue -\frac{x\coorddue_{n+1,p}}{\mu_{n+1,p}})
\nonumber \\
&\leq&
\limsup_{p\to+\infty}\sup_{B_{S}(0)\cap \widetilde\Omega_{n+1,p}}v_{n+1,p}(t)
\overset{\eqref{**}}{\leq} 0.
\end{eqnarray}
Arguing similarly as in the proof of \cite[Lemma 2]{Castro1}, we will prove that for any $r>\frac{L}{3}$ there exist $C>0$, $p_r>1$ and $\alpha\in(0,1)$ such that 
\begin{equation}\label{boundC1alpha}
\|s_{n+1,p}\|_{C^{1,\alpha}(B_r^{\textcolor{blue}{+}}(0))}\leq C,\qquad \mbox{for any $p>p_r$.}
\end{equation}
We first observe that for any fixed $q\geq2$ and for $p$ sufficiently large
\begin{equation}\label{prima}
\int_{{B}_{4r}^+(0)}|p\mu_{n+1,p}^2|^q\,dx=o_p(1)
\end{equation}
and 
\begin{eqnarray}
\label{seconda}
\int_{D_{4r}(0)}\left|1+\frac{s_{n+1,p}(t)}{p}\right|^{pq}\,d\sigma&=&\frac{1}{\mu_{n+1,p}}\int_{D_{4r\mu_{n+1,p}}(\hat x_{n+1,p})}\left(\frac{u_p(x)}{u_p(x_{n+1,p})}\right)^{pq}\,d\sigma\nonumber\\
&\leq&\frac{p}{u_p^2(x_{n+1,p})}\sup_{x\in D_{4r\mu_{n+1,p}}(\hat x_{n+1,p}) }\left(\frac{u_p(x)}{u_p(x_{n+1,p})}\right)^{p(q-1)-1}\int_{\partial \Omega}u^{p+1}_p(x)\,d\sigma\nonumber\\
&\leq& C,
\end{eqnarray}
where in the last inequality we used \eqref{geq1}, $D_{4r\mu_{n+1,p}}(\hat x_{n+1,p})\subset B_{cr\mu_{n+1,p}}(x_{n+1,p})\cap\bar\Omega$ for some constant $c>0$ (being $r>\frac{L}{3}$), \eqref{quotientbounded}, \eqref{Rkmk+1} and the energy bound \eqref{energylimit}, since for a solution $u_p$ one has $\int_{\Omega}(|\nabla u_p|^2+u_p^2)\,dx=\int_{\partial\Omega}u_p^{p+1}d\sigma$.

Let us now consider the solution $w_p$ to
\begin{eqnarray}\label{w0}
\left\{\begin{array}{ll} -\Delta w_p+\mu_{n+1,p}^2 w_p=-p\mu_{n+1,p}^2  & \quad\hbox{ in }{B}_{4r}^+(0),\\
\displaystyle\frac{\partial w_p}{\partial
	\nu}= \left ( 1+\frac{s_{n+1,p}}{p}\right)^{p} & \quad\hbox{ on }{D}_{4r}(0),\\
w_p=0 &\quad \hbox{ on }{S}_{4r}(0).
\end{array}
\right.
\end{eqnarray}
By \eqref{prima} and \eqref{seconda}, with $q=2$, the existence of such $w_p\in H^1({B}^+ _{4r}(0))$ is guaranteed by Lax-Milgram. Furthermore arguing as in  \cite[Theorem 5.3]{Shamir2},   we have  by \eqref{prima} and \eqref{seconda}, that $w_p\in W^{\frac{1}{2}+t,q}(B_{4r}^+(0))$ with the uniform bound
\begin{equation}\label{estimationw1}
\|w_p\|_{W^{\frac{1}{2}+t,q}(B_{4r}^+(0))}\leq
C\bigg(\|\mu_{n+1,p}^2 p\|_{L^q(B^+ _{4r}(0))}
+\left\|\left(1+\frac{s_{n+1,p}}{p}\right)^p\right\|_{L^q(D
_{4r}(0))}\bigg)\leq C,
\end{equation} for $q>4$ and $0<t<2/q$.\\ 
In particular, by Sobolev embeddings, $\|w_p\|_{L^{\infty}(B_{4r}^+(0))}
\leq C$, so we can define the function 
\[\varphi_p := w_p - s_{n+1,p} +
\|w_p\|_{L^\infty(B^+ _{4r}(0))}+1,\] which solves
\[
\left\{\begin{array}{ll} 
-\Delta \varphi_{p}+\mu_{n+1,p}^2 \varphi_{p}=\mu_{n+1,p}^2
(\|w_p\|_{L^\infty(B^+ _{4r})}+1)&  \hbox{ in }B_{4r}^+(0),\\
\frac{\partial  \varphi_{p}}{\partial
\nu} =0 &\hbox{ on }D_{4r}(0),\end{array}
\right.
\]
furthermore, since  $s_{n+1,p} \leq 1 $ in $B^+_{4r}(0)$ for $p$ sufficiently large  by \eqref{**persn}, then 
 \[\varphi_{p}\geq 0\quad \ \quad \quad \quad\hbox{in }B^+_{4r}(0).\]

We define, for $t=(t_1,t_2)\in B_{4r}(0)$, the function
\[
\hat\varphi_p(t)=\left\{\begin{array}{ll}
	\varphi_p(t)&\text{if $t_2\geq0$}\\
	\varphi_p(t_1,-t_2)&\text{if $t_2<0$,}
\end{array}
\right.
\]
which turns out to be a non-negative weak solution to \[-\Delta \varphi+\mu_{n+1,p}^2\varphi=\mu_{n+1,p}^2(\|w_p\|_{L^{\infty}(B_{4r}^+(0))}+1)\qquad\mbox{ in }B_{4r}(0).\]
Therefore by the Harnack inequality we get for every $a\geq1$
\begin{eqnarray*}
\left(\fint_{B_{3r}(0)}\hat\varphi_p^a\right)^{\tfrac1a}&\leq& C\left(\inf_{B_{3r}(0)}\hat\varphi_p+\|\mu_{n+1,p}^2
(\|w_p\|_{L^\infty(B^+ _{4r}(0))}+1)\|_{L^2(B_{4r}(0))}\right)\\
&\overset{3r>L\,+\,\eqref{riscrivoCaseL}}{\leq}&C\left(\varphi_p(0,\frac{x\coorddue_{n+1,p}}{\mu_{n+1,p}})+\mu_{n+1,p}^2C\right)\\
&{\leq}&C\left(2\|w_p\|_{L^{\infty}(B_{4r}^+(0))}+1+\mu_{n+1,p}^2C\right)\\
&{\leq}&C,
\end{eqnarray*}
where we have used that $s_{n+1,p}(0,\frac{x\coorddue_{n+1,p}}{\mu_{n+1,p}})=0$ and that  $\|w_p\|_{L^{\infty}(B_{4r}^+(0))}\leq C$.
Then
\[
\|\varphi_p\|_{L^a(B_{3r}(0))}\leq C|B_{3r}(0)|^{\tfrac1a}\leq C\quad\text{for any $p>p_r$ and for any $a>1$.}
\]
Finally by interior elliptic regularity
\begin{equation}\label{picche}
\|\hat\varphi_p\|_{W^{2,q}(B_{2r}(0))}\leq C\left(\|\mu_{n+1,p}^2
(\|w_p\|_{L^\infty(B^+ _{4r}(0))}+1)\|_{L^q(B_{3r}(0))}+\|\hat\varphi_p\|_{L^q(B_{3r}(0))}\right)\leq C.
\end{equation}
Being $s_{n+1,p}=w_p+\|w_p\|_{L^{\infty}(B^+_{4r}(0))}+1-\varphi_p$, combining \eqref{estimationw1} and \eqref{picche} 
we obtain
\[
\|s_{n+1,p}\|_{W^{\frac12+t,q}(B^+_{2r}(0))}\leq C\quad\text{for $q>4$, $0<t<\frac2q$ and $p>p_r$.}
\]
At last by the Morrey embedding theorem we get that
\[
\|s_{n+1,p}\|_{C^{0,\alpha}(B^+_{2r}(0))}\leq C\quad\text{for some $\alpha>0$}
\]
and in turn, by Schauder estimates for the Neumann problem, we get
\[
\|s_{n+1,p}\|_{C^{1,\alpha}(B^+_{r}(0))}\leq C\left(\|-\mu_{n+1,p}^2 p\|_{L^\infty(B^+_{2r}(0))}+\|(1+\frac{s_{n+1,p}}{p})^p\|_{C^{0,\alpha}(D_{2r}(0))}+\|s_{n+1,p}\|_{C^{0,\alpha}(B^+_{2r}(0))}\right)\leq C
\]
for any $p>p_r$, so \eqref{boundC1alpha} holds true.

\

By \eqref{boundC1alpha} and the regularity theory of elliptic equations, we derive that, up to a subsequence, 
\begin{equation}\label{convergence0}
s_{n+1,p}\rightarrow \tilde U \mbox{ in }  C^1_{loc} (\overline{\mathbb R^{2}})
\mbox{ as }p\rightarrow\infty,
\end{equation}
where, by \eqref{systemsn+1,p} and \eqref{**persn}, $\tilde U$ satisfies the
 following problem
\begin{equation}\label{limitproblem2}
\left\{\begin{array}{ll}
\Delta \tilde U= 0&  \mbox{ in }\mathbb R^{2}
\\
\frac{\partial \tilde U}{\partial \nu}=e^{\tilde U}& \mbox{ on }\partial \mathbb R^{2}
\\
\tilde U\leq 0 &  \mbox{ in }\mathbb R^{2}.
\end{array}\right.
\end{equation}
Furthermore $s_{n+1,p}(0, \frac{x\coorddue_{n+1,p}}{\mu_{n+1,p}})=0$, for any $ p$, then by \eqref{riscrivoCaseL} it follows that $\tilde U(0,L)=0$.

Let us now prove that
\begin{equation}\label{boundness1}\int_{\partial \mathbb R^{2}} e^{\tilde U} < \infty.\end{equation}

Let $R>0$, then for any $|t\coorduno|<R$
we have
$$\displaystyle(p+1)\left[\log \left(1+\frac{s_{n+1,p}(t\coorduno,0)}{p}\right)-\frac{s_{n+1,p}(t\coorduno,0)}{p+1}\right]\underset{p\rightarrow+\infty}{\longrightarrow} 0, $$
so we can use Fatou's Lemma and \eqref{convergence0} to write
\begin{eqnarray*}
\displaystyle\int_{-R}^R e^{\tilde U(t\coorduno,0)}dt\coorduno&\leq&
\int_{-R}^{R}e^{s_{n+1,p}(t\coorduno, 0)+(p+1)\left[\log \left(1+\frac{s_{n+1,p}(t\coorduno,0)}{p}\right)-\frac{s_{n+1,p}(t\coorduno,0)}{p+1}\right]}dt\coorduno+o_p(1)\\
&\leq&
\int_{B_{R}(0)\cap\{t\coorddue=0\}}\left(1+\frac{s_{n+1,p}(t)}{p}\right)^{p+1}dt\coorduno+o_p(1)\\
&\leq&\int_{B_{R}(0)\cap\{t\coorddue=0\}}\fr{u_p^{p+1}(\hat x_{n+1,p}+\mu_{n+1,p}t)}{u_p^{p+1}(x_{n+1,p})}dt\coorduno+o_p(1)\\
&\leq&\mu_{n+1,p}^{-1}
\int_{B_{R\mu_{n+1,p}}(\hat x_{n+1,p})\cap \{x\coorddue=0\}}\fr{u^{p+1}_p(x)}{u^{p+1}_p(x_{n+1,p})}dx\coorduno+o_p(1)\\
&\leq&\fr p{u_p(x_{n+1,p})^2}\int_{\partial
\Omega}\upp^{p+1}(x)d\sigma(x)+o_p(1)
\\
&\stackrel{\eqref{geq1}}{\leq}&\fr
p{(1-\ep)^2}\int_{\partial\Omega}\upp^{p+1}(x)
d\sigma(x)+o_p(1)\stackrel{\eqref{energylimit}}{\leq} C<+\infty,
\end{eqnarray*}
so that $e^{\tilde U}\in L^1(\partial\mathbb R^{2})$.

Using now \eqref{limitproblem2}, \eqref{boundness1} and 
 the classification due to P. Liu \cite{Liu} (see also \cite{ZZ}), we obtain
\begin{equation}
\label{ULiuClas}
\tilde U(t\coorduno,t\coorddue)=\log \frac{2\eta_2}{(t\coorduno-\eta_{1})^2+(t\coorddue+\eta_{2})^2} \ \hbox{where } \eta_1\in
\mathbb{R}\ \hbox{and }  \eta_2>0.
\end{equation}

\

\begin{remark}\label{remark:L=0}
Notice that what we have proven up to now in Case $(1)$ holds true also if 
\[dist(x_{n+1,p},\partial \Omega)\mu^{-1}_{n+1,p}\longrightarrow
	L=0,\] in particular we get that the rescaled function $s_{n+1}$ defined in \eqref{cuoris}
	converges  to $\tilde U$ (see \eqref{convergence0}),
which in this case satisfies the conditions $\tilde U(0,0)=0$ and $\tilde U\leq 0$, that imply that $\eta_1=0$ and $\eta_2=2$, namely that $\tilde U\equiv U$, where $U$ is the function defined in \eqref{v0}.
\end{remark}

\

We remark that $\tilde U$ is a radial and decreasing
function with respect to the point $(\eta_{1},-\eta_{2})$. We
have
$$\tilde U(\eta_{1},0)>\tilde U(0,L)=0$$ which
contradicts the fact that $\tilde U\leq 0$.\\
This rules out Case $(1)$, namely the possibility that $dist(x_{n+1,p},\partial \Omega)\mu^{-1}_{n+1,p}\longrightarrow
	L>0$.

\

\underline{Case $(2)$.} In the sequel we will prove that the second case  i.e. $dist(x_{n+1,p},
\partial \Omega)\mu_{n+1,p}^{-1}\longrightarrow +\infty$ can not occur too. Let $v_{n+1,p}$ be the rescaled function defined in \eqref{cuori}.
 By
\eqref{problem1}, $v_{n+1,p}$ solves
$$ -\Delta v_{n+1,p} +\mu_{n+1,p}^2v_{n+1,p}=-\mu_{n+1,p}^2p \quad \hbox{in }\widetilde{\Omega}_{n+1,p}.$$
Since for any $R>0$, $B_R(0)\subset \widetilde{\Omega}_{n+1,p}$ for
$p$ large enough,  it follows that
$\widetilde{\Omega}_{n+1,p}$ converges to the whole plane
$\mathbb{R}^2$.\\ 
Furthermore, from \eqref{geq1}, \eqref{mk+1} and the uniform bound \eqref{boundSoluzio} we get
\begin{equation}\label{Fiori}
|\Delta v_{n+1,p}|\leq |\mu_{n+1,p}^2v_{n+1,p}|+|\mu_{n+1,p}^2p|\leq C \hbox{ in }\widetilde{\Omega}_{n+1,p},
\end{equation}
 namely $v_{n+1,p}$ are functions with uniformly
bounded laplacian in $B_R(0)$, moreover  $v_{n+1,p}(0)=0$.  
\\
Let us decompose
\[v_{n+1,p}=\varphi_{p}+\psi_{p}\quad\text{ in }\widetilde\Omega_{n+1,p}\cap B_{R}(0),\]
with $-\Delta\varphi_{p}=-\Delta v_{n+1,p}$ in $\widetilde\Omega_{n+1,p}\cap B_{R}(0)$
and $\psi_{p}=v_{n+1,p}$ in $\partial (\widetilde\Omega_{n+1,p}\cap B_{R}(0))$.
Using \eqref{Fiori} by standard elliptic theory, we see that $\varphi_{p}$ is uniformly bounded in $\widetilde\Omega_{n+1,p}\cap B_{R}(0)$. The function $\psi_{p}$ is harmonic in $\widetilde\Omega_{n+1,p}\cap B_{R}(0)$ and bounded from above by \eqref{**}. By the Harnack inequality, either $\psi_{p}$
is uniformly bounded in $B_{R}(0)$, or it tends to $-\infty$ on each compact set of $B_{R}(0)$. The second alternative cannot happen because, by definition, $\psi_{p}(0)=v_{n+1,p}(0)-\varphi_{p}(0)=-\varphi_{p}(0)\geq -C$. Hence $v_{n+1,p}$ is uniformly bounded in $B_{R}(0)$ for all $R>0$. After passing to a subsequence, standard elliptic theory implies that $v_{n+1,p}$ is bounded in $C_{loc}^{2}(\mathbb R^{2})$. Thus 
\begin{equation}\label{convergence}
v_{n+1,p}\rightarrow V \mbox{ in }  C^1_{loc} (\mathbb{R}^2) \mbox{
as }p\rightarrow\infty,
\end{equation}
where $V\in C^{1}(\mathbb R^{2})$ is a harmonic function satisfying $V(0)=0$ and $V\leq 0$ by \eqref{**}.

So by a Liouville type theorem 
\begin{equation}\label{VNulla}V\equiv 0.\end{equation}
Let now  $Q_{n+1}=\displaystyle\lim_{p\rightarrow\infty} x_{n+1,p}$.
By \eqref{FirstInclusion} and \eqref{mk+1} it follows that $Q_{n+1}\in\mathcal S\subset \partial \Omega$, where $\mathcal S$ is the concentration set in \eqref{def:blowupset}.
In order to simplify the exposition,
we will assume in the sequel that $\partial \Omega$ is flat near the
point $Q_{n+1}$. This flatness assumption means that there exists
$R_0>0$ such that
$$\Omega \cap B_{R_0}^+(Q_{n+1})=B_{R_0}^+(Q_{n+1})\hbox{ and
}D_{R_0}(Q_{n+1})\subset
\partial \Omega.$$ We also assume that
near $Q_{n+1}$, $\partial \Omega$ is contained in the hyperplane
$x\coorddue=0$ and the unit outward normal to $\partial \Omega$ at
$Q_{n+1}$ is $(-e\coorddue)$, where $e\coorddue$ is the second element of the
canonical basis of $\mathbb{R}^2$.\\
W.l.o.g. we can also assume that 
\begin{equation}\label{maggio}B_{R_0}(Q_{n+1})\cap\mathcal S=\{Q_{n+1}\}.\end{equation}
Now, inspired by Guo-Liu \cite{GL} (see page 750), we define the function
$$W_p(x)=\displaystyle \frac{p}{u_p^2(x_{n+1,p})}\int_{-s_0}^{s_0}u_p^{p+1}(x+(s,0))ds, \quad
\forall x\in\overline{B}^+_{2s_0}(Q_{n+1}),
$$ where $0<s_0<R_0/4$.
We have
 \begin{eqnarray*}
\Delta_x \left(u^{p+1}\left(x+(s,0)\right)\right)&=&
(p+1)u^{p}_{p}\left(x+(s,0)\right)\Delta_x
u_{p}\left(x+(s,0)\right)\\&&+
(p+1)\,p\,u^{p-1}_{p}\left(x+(s,0)\right)\left|\nabla_x u_{p}\left(x+(s,0)\right)\right|^2\\
&\geq&(p+1)\,u^{p}_{p}\left(x+(s,0)\right)\Delta_x
u_{p}\left(x+(s,0)\right)
\\
&=&(p+1)\,u^{p+1}_p\left(x+(s,0)\right)\\&\geq& 0\quad  \forall
x\in B^+_{2s_0}(Q_{n+1})\cup S_{2s_{0}}(Q_{n+1})\hbox{ and }\forall s\in
[-s_0,s_0].
 \end{eqnarray*}
Hence $W_p$ is a subharmonic
 function in $B^+_{2s_0}(Q_{n+1})$.\\
By \eqref{convpupNEWW} and \eqref{maggio}, for $p$ large,
\[
|u_p(y)|\leq C_{1}\frac{1}{p}, \quad \forall\, y\in \overline{S_{2s_0}(Q_{n+1})},
\]
thus
 \begin{equation}\label{Wpvaazero}
 W_p\rightarrow 0\quad\mbox{ uniformly  in }\overline{S_{2s_0}(Q_{n+1})}.
 \end{equation} 
  Furthermore for each $y\in D_{2s_0}$ we have
\begin{equation}\label{wdoppiobounded}
W_p(y)\leq \frac{p}{u_p^{2}(x_{n+1,p})}\int_{\partial
\Omega}u^{p+1}(x)\ d\sigma(x)\stackrel{\eqref{energylimit},\eqref{geq1}}{\leq} C_{2}.
\end{equation}
Combining \eqref{Wpvaazero} with \eqref{wdoppiobounded} and using the maximum principle we get
\begin{equation}\label{wmajor}
W_p(x)\leq C \hbox{ for all } x\in \overline{B}^+_{2s_0}(Q_{n+1}).
\end{equation}
On the other hand, we have for $k> C$ and $p$ sufficiently large
\begin{eqnarray*}
\displaystyle
W_p(x_{n+1,p})&=&\frac{p}{u_p^{2}(x_{n+1,p})}\int_{-s_0}^{s_0}u^{p+1}_p(x_{n+1,p}+(s,0))ds\\
&\stackrel{\eqref{mk+1}}{\geq}&\frac{p}{u_p^{2}(x_{n+1,p})}\int_{-k\mu_{n+1,p}}^{k\mu_{n+1,p}}u^{p+1}_p(x_{n+1,p}+(s,0))ds\\
&\geq&\frac{p \ \mu_{n+1,p}}{u_p^{2}(x_{n+1,p})}\int_{-k}^{k}u^{p+1}_p(x_{n+1,p}+\mu_{n+1,p}(t,0))dt\\
&\geq&\int_{-k}^{k}\left(\frac{u_p(x_{n+1,p}+\mu_{n+1,p}(t,0))}{u_p(x_{n+1,p})}\right)^{p+1}dt\\
&\geq& \int_{-k}^{k}\left(1+\frac{v_{n+1,p}(t,0)}{p}\right)^{p+1}dt\\
&\stackrel{\eqref{convergence}}{=}&\int_{-k}^{k}e^{V(t,0)}dt+o(1)\stackrel{\eqref{VNulla}}{=}2k>2C,
\end{eqnarray*}
which is in contradiction with \eqref{wmajor}.\\
Hence, we have proved that  Case $(2)$ cannot occur.\\
So, up to a subsequence, $$\displaystyle\lim_{p\rightarrow
\infty}dist(x_{n+1,p},
\partial \Omega)\mu^{-1}_{n+1,p}=0$$ and
$(\mathcal{P}_2^{n+1})$ holds with the added points $(x_{n+1,p})$.\\

\textit{\underline{Proof of $(\mathcal{P}_3^{n+1})$}.}\\
Since $(\mathcal{P}_2^{n+1})$ holds, by Remark \ref{remark:L=0}, assuming the flatness of $\partial\Omega$ near $Q_{n+1}$, we have that 
\[s_{n+1,p}\rightarrow U\quad\mbox{ in }C^{1}_{loc}(\overline{\mathbb R^{2}})\]
where  $U$ is the limit function in \eqref{v0}.\\
By the definition of $s_{n+1,p}$ (see \eqref{cuoris}), since  $x\coorddue_{n+1,p} / \mu_{n+1,p}\to0$, we conclude that also 
\[v_{n+1,p}\rightarrow U\quad\mbox{ in }C^{1}_{loc}(\overline{\mathbb R^{2}}),\]
where $v_{n+1,p}$ are the rescaled functions introduced in \eqref{cuori}.

In the flat case this proves that $(\mathcal{P}_3^{n+1})$ holds with the added points
$(x_{n+1,p})$, since the rescaled function $z_{n+1,p }$ in \eqref{vipdef} coincide with the rescaled functions $v_{n+1,p}$.
The non-flat case can be obtained in the same way (see \cite[Theorem 3]{Castro1}), thus \textit{STEP 2.} is proved.

\

\

\textit{STEP 3. We complete the proof of Proposition \ref{thm:x1N}.}

\

From \textit{STEP 1.} we have that $(\mathcal{P}_1^1)$,
$(\mathcal{P}_2^1)$ and $(\mathcal{P}_3^1)$ hold. Then, by \textit{STEP
2.}, either 
$(\mathcal{P}_4^1)$ holds
or
$(\mathcal{P}_1^2)$, $(\mathcal{P}_2^2)$ and
$(\mathcal{P}_3^2)$ hold. In the first
case the assertion is proved with $k=1$. In the second case we go on
and proceed with the same alternative until we reach a number
$k\in\N\setminus\{0\}$ for which $(\mathcal{P}_1^{k})$,
$(\mathcal{P}_2^{k})$, $(\mathcal{P}_3^{k})$ and
$(\mathcal{P}_4^{k})$ hold up to a sequence. Note that this is
possible because the solutions $u_p$ satisfy  \eqref{energylimit}
and Lemma \ref{lemma:BoundEnergiaBassino} holds and hence the
maximal number $k$ of families of points for which
$(\mathcal{P}_1^{k})$, $(\mathcal{P}_2^{k})$, $(\mathcal{P}_3^{k})$
hold must be finite.

\

Moreover, given any other family of points $x_{k+1,p}$, it is
impossible to extract a new sequence from it such that
$(\mathcal{P}_1^{k+1})$, $(\mathcal{P}_2^{k+1})$,
$(\mathcal{P}_3^{k+1})$ and $(\mathcal{P}_4^{k+1})$ hold together
with the points $(x_{i,p})_{i=1,..,k+1}$. Indeed if
$(\mathcal{P}_1^{k+1})$ was verified then
\[\frac{|x_{k+1,p}-\xip|}{\mu_{k+1,p}}\to+\infty\quad \mbox{ as } p\to+\infty,\ \mbox{ for any }i\in\{1,\ldots,k\},\]
but this would contradict $(\mathcal{P}_4^k)$.

\

Finally let us recall that by Remark \ref{boundaryconcpoints} 
\[\widetilde{\mathcal{S}}=\left\{\lim_{p\to+\infty}\xip,\,i=1,\ldots,k\right\} 	\] 
hence \eqref{PrimaCaratterizzazioneSTildeN}   follows from \eqref{PrimaCaratterizzazioneSTilde}.
\end{proof}

\section{Refined analysis}\label{section:positive}

We know that the solutions of \eqref{problem1} satisfy Theorem \ref{provaI} in the Introduction.
In particular,  for a sequence of positive solutions $u_{p_{n}}$,  the blow-up set $\mathcal S$ of $p_{n}u_{p_{n}}$ is a discrete subset of $\partial\Omega$  
\[\mathcal S=\{\bar x_1,\ldots,\bar x_m\}\subset \partial\Omega\] 
(see \eqref{def:blowupset} for the definition of $\mathcal S$ and $(1)$-Theorem \ref{provaI} for its characterization). 
\\ 
Moreover we have seen in Proposition \ref{thm:x1N} that, up to reordering the points $\bar x_{i}$,  the concentration set $\widetilde{\mathcal{S}}$ of $u_{p_{n}}$, defined in \eqref{S},  satisfies
\[\widetilde{\mathcal{S}}=\{\bar x_1,\ldots ,\bar x_N\}, \quad\mbox{ with }N\leq\min\{m,k\},\]   
where $k$ is the maximal number of bubbles  $U$ in Proposition  \ref{thm:x1N}.\\\\

Thanks to the local analysis developed in the previous section, we can actually deduce the following.
\begin{proposition}
\label{consequence}
$$\mathcal S=\widetilde{\mathcal{S}}\qquad \text{and so in particular\quad $m=N\leq k$.} $$
\end{proposition}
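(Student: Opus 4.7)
The plan is to prove the nontrivial inclusion $\mathcal{S} \subseteq \widetilde{\mathcal{S}}$, since the reverse inclusion $\widetilde{\mathcal{S}} \subseteq \mathcal{S}$ was already recorded in \eqref{FirstInclusion}. I argue by contradiction: assume there exists some $\bar{x}_j \in \mathcal{S} \setminus \widetilde{\mathcal{S}}$, with associated positive mass $c_j > 0$ provided by $(2)$-Theorem \ref{provaI}. The strategy is to use the uniform pointwise estimate $(\mathcal{P}_4^k)$ from Proposition \ref{thm:x1N} to dominate $p_n u_{p_n}^{p_n}$ in a fixed neighborhood of $\bar{x}_j$, and then to combine this with the $L^1$-smallness of $u_{p_n}$ coming from \eqref{convpup} to force the boundary mass of $p_n u_{p_n}^{p_n}$ near $\bar{x}_j$ to vanish in the limit, in contradiction with $c_j>0$.

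Concretely, since $\bar{x}_j \notin \widetilde{\mathcal{S}} = \{\lim_n x_{i,p_n}: i=1,\ldots,k\}$, the distance $d_0 := \dist(\bar{x}_j,\widetilde{\mathcal{S}})$ is strictly positive. I fix $\delta \in (0, d_0/4)$ small enough that $\bar{x}_j$ is the only point of $\mathcal{S}$ contained in $\overline{B_\delta(\bar{x}_j)}$. Then for $n$ large one has $|x_{i,p_n}-\bar{x}_j|\geq d_0/2$ for every $i=1,\ldots,k$, so every $x \in B_\delta(\bar{x}_j)\cap\overline{\Omega}$ satisfies
$$R_{k,p_n}(x) = \min_{i=1,\ldots,k}|x-x_{i,p_n}| \geq \frac{d_0}{2}-\delta \geq \frac{d_0}{4}.$$
Property $(\mathcal{P}_4^k)$ therefore delivers a uniform pointwise bound independent of $n$,
$$p_n\, u_{p_n}^{p_n-1}(x) \leq \frac{4C}{d_0} =: A \qquad\text{for all } x \in B_\delta(\bar{x}_j)\cap\overline{\Omega}.$$

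To close the contradiction, I factor $p_n u_{p_n}^{p_n} = (p_n u_{p_n}^{p_n-1})\cdot u_{p_n}$ and use the bound above to obtain
$$p_n \int_{B_\delta(\bar{x}_j)\cap\partial\Omega} u_{p_n}^{p_n}\,d\sigma \leq A \int_{\partial\Omega} u_{p_n}\,d\sigma = \frac{A}{p_n}\int_{\partial\Omega} p_n u_{p_n}\,d\sigma \longrightarrow 0,$$
since $p_n u_{p_n}$ is bounded in $L^1(\partial\Omega)$ by the convergence in \eqref{convpup}. On the other hand, since $\mathcal{S}$ is finite and $\bar{x}_j$ is the only point of $\mathcal{S}$ in $\overline{B_\delta(\bar{x}_j)}$ (so $\partial(B_\delta(\bar{x}_j)\cap\partial\Omega)$ is disjoint from $\mathcal{S}$), the weak-$*$ convergence $p_n u_{p_n}^{p_n}\stackrel{*}{\rightharpoonup}\sum_i c_i\delta_{\bar{x}_i}$ on $\partial\Omega$ from $(2)$-Theorem \ref{provaI} yields
$$\lim_{n\to\infty} p_n \int_{B_\delta(\bar{x}_j)\cap\partial\Omega} u_{p_n}^{p_n}\,d\sigma = c_j > 0,$$
which is incompatible with the preceding display. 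Hence $\mathcal{S}\setminus\widetilde{\mathcal{S}}=\emptyset$, so $\mathcal{S}=\widetilde{\mathcal{S}}$ and, in view of Proposition \ref{thm:x1N}, $m=N\leq k$.

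I do not anticipate any real obstacle: the argument is essentially bookkeeping that uses ingredients already in place. The only mildly delicate point is the choice of $\delta$: it must be simultaneously smaller than the distance from $\bar{x}_j$ to $\widetilde{\mathcal{S}}$ (to activate $(\mathcal{P}_4^k)$ as a bound independent of $n$) and small enough that $B_\delta(\bar{x}_j)$ sees no other point of $\mathcal{S}$ (so that the weak-$*$ limit of $p_n u_{p_n}^{p_n}$ on $B_\delta(\bar{x}_j)\cap\partial\Omega$ is exactly $c_j$). Both are possible because $\widetilde{\mathcal{S}}\cup\mathcal{S}$ is finite and $\bar{x}_j\notin\widetilde{\mathcal{S}}$.
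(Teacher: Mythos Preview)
Your proof is correct, and it follows a genuinely different route from the paper's.

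The paper argues the contradiction at the level of $p_n u_{p_n}$ rather than at the level of the mass $p_n\int u_{p_n}^{p_n}$: starting from $(\mathcal{P}_4^k)$ it uses the Green representation $u_{p_n}(y)=\int_{\partial\Omega}G(x,y)u_{p_n}^{p_n}(x)\,d\sigma$ and splits the integral into $B_\delta(y)$ (where $(\mathcal{P}_4^k)$ gives $u_{p_n}^{p_n-1}\leq C/p_n$) and its complement (where $G$ is bounded and \eqref{boundEnergiap} applies), to obtain $\max_K p_n u_{p_n}\leq C$ on a compact $K$ containing $\bar x_j$ but disjoint from $\widetilde{\mathcal S}$; this contradicts the very definition of $\mathcal S$, since there is $x_n\to\bar x_j$ with $p_n u_{p_n}(x_n)\to+\infty$. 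Your argument is shorter and more elementary: the factorization $p_n u_{p_n}^{p_n}=(p_n u_{p_n}^{p_n-1})\,u_{p_n}$ together with $(\mathcal{P}_4^k)$ and the $L^1(\partial\Omega)$ convergence of $p_n u_{p_n}$ from \eqref{convpup} is enough, without any Green-function computation. The paper's approach, on the other hand, yields as a byproduct the $L^\infty$ bound \eqref{convpupNEWW} on compacts away from $\widetilde{\mathcal S}$ (a priori only known away from $\mathcal S$), which is a slightly stronger intermediate statement.
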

\begin{proof}
It is enough to show that $\mathcal{S}\subseteq \widetilde{\mathcal S}$, namely that $m\leq N$. Let us assume by contradiction that $\bar x_{m}\in\mathcal{S}\setminus    \widetilde{\mathcal{ S}}$. Since $\bar x_{m}\in\mathcal{S}$, by definition  there exists $x_{n}\in\overline{\Omega}$, $x_{n}\rightarrow \bar x_{m}$ such that $p_{n}u_{p_{n}}(x_{n})\rightarrow +\infty$.
Since $\bar x_{m}\not\in\widetilde{\mathcal{ S}}$ then there exist $r>0$ such that $\overline{B_r(\bar x_m)}\cap \widetilde{\mathcal{ S}}=\emptyset$ and that definitively $x_n\in \overline{B_r(\bar x_m)}\cap \overline\Omega=:K$.
Next we show that $(\mathcal{P}^k_4)$, which holds by  Proposition \ref{thm:x1N}, implies that 
\begin{equation}\label{E4}\max_{K}p_{n}|u_{p_{n}}|\leq C, \end{equation} 
thus reaching a contradiction.
\\
Let $\delta:=dist (K,\widetilde{\mathcal S})/2>0$. For $y \in K$ we have
\begin{equation}\label{E1}u_{p_n}(y)=\int_{\partial
\Omega} G(x,y)\frac{\partial u_{p_n}}{\partial \nu} \ d\sigma(x)
=\int_{\partial \Omega} G(x,y)u^{p_n}_{p_n}(x) \
d\sigma(x),\end{equation} where $G(.,.)$ is the Green function
satisfying \eqref{Greenequation}. We split the integral over
$\partial \Omega $ into two parts, the integral over $B_{\delta}(y)\cap
\partial \Omega$ and the integral over $\partial \Omega \setminus
B_{\delta}(y)$.\\
On the one hand, if $x\in B_{\delta}(y)$ we get $d(x,\widetilde{\mathcal{S}})\geq \delta >0$ and so 
$R_{k,{p_n}}(x)\geq c>0$ for $n$ large. From $(\mathcal{P}^k_4)$ we derive
$$u^{{p_n}-1}_{p_n}(x)\leq \frac{C}{{p_n}}, \quad \forall x\in B_{\delta}(y).$$
 Hence
\begin{equation}\label{E2}\int_{B_{\delta}(y)\cap
\partial \Omega} G(x,y) u^{p_n}_{p_n}(x) \ d\sigma(x)\leq
C\left(\frac{1}{p_n}\right)^\frac{{p_n}}{{p_n}-1}\int_{\partial \Omega} G(x,y) \
d\sigma(x)\leq \frac{C}{{p_n}}
\end{equation}
where we have used the fact that $G(.,y)$ is integrable on $\partial \Omega$
which follows from Lemma \ref{Gponctuelestimate} in the Appendix.\\ On the other hand if $x\in
\partial \Omega \setminus B_{\delta}(y)$ then $G(x,y)\leq C$ by using \eqref{Gboundness} since
$|x-y|>\delta>0$. Thus we get
\begin{equation}\label{E3}\int_{\partial\Omega \setminus B_{\delta}(y)} G(x,y) u^{{p_n}}_{p_n}(x) dx\leq
c_{K}\int_{\partial \Omega}u^{p_n}_{p_n}(x) dx\overset{\eqref{boundEnergiap} }{\leq} \frac{c_{K}}{{p_n}}. \end{equation}
Combining \eqref{E1} with \eqref{E2} and \eqref{E3} we deduce \eqref{E4}.
\end{proof}

We conclude the subsection with a result which will be of help in computing the constants $c_i$ which appear in \eqref{convpup} and \eqref{impx_j relazioneintro}.
\begin{lemma}\label{defgammaj}
Let 
$p_n\rightarrow +\infty$, as $n\rightarrow \infty$ and $c_i>0$ be as in Theorem \ref{provaI}, then 
	\[c_i=\lim_{\delta\rightarrow 0}\lim_{n\rightarrow \infty}p_n\int_{B_{\delta}(\bar x_i)\cap\partial\Omega}u_{p_n}^{p_n}\, d\sigma(x)\ , \quad i=1,\ldots, m.\]
	\end{lemma}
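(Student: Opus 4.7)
The plan is to derive the formula from the weak-$*$ convergence of the measures $p_n u_{p_n}^{p_n}d\sigma$ stated in Theorem \ref{provaI}(2), combined with the pointwise exponential decay of $p_n u_{p_n}^{p_n}$ on compact subsets of $\overline\Omega\setminus\mathcal S$ that follows from \eqref{convpupNEWW}. The argument is essentially a measure-theoretic bookkeeping exercise once the information in Theorem \ref{provaI} is in place.

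First I would fix $i\in\{1,\ldots,m\}$ and choose $\delta_0>0$ small enough that the balls $B_{\delta_0}(\bar x_j)$, $j=1,\ldots,m$, are pairwise disjoint, so that $B_{\delta_0}(\bar x_i)\cap\mathcal S=\{\bar x_i\}$. For each $\delta\in(0,\delta_0)$ I would select a continuous cut-off $\eta_\delta$ on $\partial\Omega$ with $0\leq\eta_\delta\leq 1$, $\eta_\delta\equiv 1$ on $B_{\delta/2}(\bar x_i)\cap\partial\Omega$ and $\operatorname{supp}\eta_\delta\subset B_\delta(\bar x_i)\cap\partial\Omega$. Testing the weak-$*$ convergence from Theorem \ref{provaI}(2) against $\eta_\delta$ yields
\[
p_n\int_{\partial\Omega} u_{p_n}^{p_n}\eta_\delta\,d\sigma\;\longrightarrow\;\sum_{j=1}^m c_j\,\eta_\delta(\bar x_j)=c_i,
\]
since $\eta_\delta$ vanishes at all $\bar x_j$ with $j\neq i$ and equals $1$ at $\bar x_i$.

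Next I would split
\[
p_n\int_{B_\delta(\bar x_i)\cap\partial\Omega}u_{p_n}^{p_n}\,d\sigma = p_n\int_{\partial\Omega} u_{p_n}^{p_n}\eta_\delta\,d\sigma\;+\;p_n\int_{(B_\delta\setminus B_{\delta/2})(\bar x_i)\cap\partial\Omega}u_{p_n}^{p_n}(1-\eta_\delta)\,d\sigma
\]
and show the second term is negligible. The annulus $(B_\delta\setminus B_{\delta/2})(\bar x_i)\cap\partial\Omega$ is a compact subset of $\partial\Omega\setminus\mathcal S$, so by \eqref{convpupNEWW} there is $C>0$ with $u_{p_n}\leq C/p_n$ on it, whence $p_n u_{p_n}^{p_n}\leq p_n(C/p_n)^{p_n}\to 0$ uniformly. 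Consequently $\lim_n p_n\int_{B_\delta(\bar x_i)\cap\partial\Omega}u_{p_n}^{p_n}\,d\sigma=c_i$ for every $\delta\in(0,\delta_0)$, and the outer limit $\delta\to 0$ is then automatic.

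The only delicate point is ensuring that the cut-off does not miss or artificially inflate the local mass; this is why the layered annulus $B_\delta\setminus B_{\delta/2}$ is introduced, so that on the support of $1-\eta_\delta$ inside $B_\delta(\bar x_i)$ one has a genuine positive distance from $\bar x_i$ and hence can invoke the $C^1_{loc}$-bound of Theorem \ref{provaI}(2). No further estimate beyond \eqref{convpupNEWW} and the weak-$*$ statement in Theorem \ref{provaI}(2) is required, so I expect the proof to be short and essentially mechanical.
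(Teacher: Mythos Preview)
Your argument is correct. You use the weak-$*$ convergence of $p_n u_{p_n}^{p_n}\,d\sigma$ to $\sum_j c_j\delta_{\bar x_j}$ from Theorem~\ref{provaI}(2), test against a continuous cut-off supported in $B_\delta(\bar x_i)$, and kill the annular remainder via the pointwise decay \eqref{convpupNEWW}. This is a clean and direct way to extract the local mass once the measure convergence is taken as given.

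The paper proceeds differently: instead of invoking the weak-$*$ statement as a black box, it retraces the Green representation formula $p_n u_{p_n}(x)=p_n\int_{\partial\Omega}G(x,y)u_{p_n}^{p_n}(y)\,d\sigma(y)$ for $x\in\overline\Omega\setminus\mathcal S$, splits the boundary into the arcs $B_\delta(\bar x_j)\cap\partial\Omega$ and a remainder, and then reads off the identification $c_i=\lim_{\delta\to0}\lim_n p_n\int_{B_\delta(\bar x_i)\cap\partial\Omega}u_{p_n}^{p_n}\,d\sigma$ directly from the limit \eqref{convpup}. Your approach is shorter and purely measure-theoretic; the paper's approach is more self-contained (it does not rely on the weak-$*$ clause of Theorem~\ref{provaI} but only on \eqref{convpup} and \eqref{boundEnergiap}) and has the side benefit of rehearsing the Green-representation machinery that is reused later, e.g.\ in the proof of Proposition~\ref{valoremi}.
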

\begin{proof}
We retrace the proof of \eqref{convpupNEWW} in order to characterize the constants $c_{i}$.
Let us observe that, since the points $\bar x_i$'s are isolated, there exists $\delta>0$ such that
$B_{2\delta}(\bar x_i)\cap B_{2\delta}(\bar x_j)=\emptyset$. Then by
the Green representation formula for each
$x\in\overline{\Omega}\setminus {\mathcal{S}}$ we have
\begin{eqnarray*}
p_nu_{p_n}(x)&=& p\int_{\partial\Omega}G(x,y)u^{p_n}_{p_n}(y)\,d\sigma(y)\\
&=&
p_n\sum_{i=1}^m\int_{B_{\delta}(\bar x_i)\cap\partial\Omega}G(x,y)u^{p_n}_{p_n}(y)\,d\sigma(y)+
\,p_n\int_{\partial\Omega\setminus\cup_{i} B_{\delta}(\bar x_i)}G(x,y)u^{p_n}_{p_n}(y)\,d\sigma(y)\\
&=&p_n\sum_{i=1}^m\int_{B_{\delta}(\bar x_i)\cap\partial\Omega}G(x,y)u^{p_n}_{p_n}(y)\,d\sigma(y)
+ o_n(1),
\end{eqnarray*}
where in the last equality we have used that $p_nu_{p_n}$ is bounded on compact sets of $\overline\Omega\setminus\mathcal S$ and the fact
that $\int_{\partial \Omega}G(x,y)\ d\sigma(y)\leq C$ (from Lemma \ref{Gponctuelestimate} in the Appendix).\\
Furthermore by the continuity of $G(x,\cdot)$ in $\overline\Omega
\setminus\{x\}$ and by 
\eqref{boundEnergiap} we obtain, up to a subsequence, that
\[
\lim_{n\to+\infty}p_nu_{p_n}(x)=\sum_{i=1}^m c_i G(x,\bar x_i),\qquad\text{where}\quad c_i:=\lim_{\delta\rightarrow 0}\lim_{n\rightarrow \infty}p_n\int_{B_{\delta}(\bar x_i)\cap\partial\Omega}u_{p_n}^{p_n}\, d\sigma(x).\]
\end{proof}
\

\subsection{Scaling around local maxima} \label{SectionScaling around local maxima}$\;$\\

Let  $x_{i,p_n}\in\overline\Omega$,  $i=1,\ldots, k$, be  the maximal number of concentrating sequences in Proposition  \ref{thm:x1N}. W.l.o.g. we can relabel them
 and assume for the first $m$ sequences  that
\begin{equation}\label{ribattezzo}
x_{j,p_n}\rightarrow \bar x_j,\ \forall j=1,\ldots, m\quad\mbox{ and
}\quad{\mathcal{S}}=\{\bar x_1, \ \bar x_2, \ \ldots, \bar x_m \}
\end{equation}

 In order to simplify the exposition, we will
assume in the sequel that $\partial \Omega$ is flat near $\bar x_j\hbox{
for all } j= 1,\ldots ,m$. This flatness assumption means that there
exists $R_{j}>0$ such that
\begin{equation}
\label{hpFlat}
\Omega \cap B_{R_{j}}^+ (\bar x_j)=B_{R_{j}}^+(\bar x_j)\hbox{ and
}D_{R_{j}}(\bar x_j)\subset
\partial \Omega, \hbox{ for all } j= 1,\ldots ,m,
\end{equation} 
and moreover $\Psi_j\equiv\Id$.
Since the $\bar x_j$'s are distinct, it follows that there exists $r\in(0,\min_{j=1,\ldots, m} R_{j}/4)$ such that
\begin{equation}\label{rPiccoloAbbastanza}
B_{4r}^+(\bar x_\ell) \cap B_{4r}^+(\bar x_j)=\emptyset,\
B_{4r}^+(\bar x_j)\subset\Omega, \hbox{ for all } \ell,j = 1,\ldots ,m, \ell
\neq j.
\end{equation}

\begin{lemma}\label{lemma:possoRiscalareAttornoAiMax}
Let $m \in\mathbb{N}\setminus\{0\}$ be as in \eqref{ribattezzo} and
let $r > 0$ be as in \eqref{rPiccoloAbbastanza}. Let us define
${{y}}_{j,n} \in \overline{B_{2r}^+(\bar x_j)}, j = 1, \ldots,m$, such
that
\begin{equation}\label{4.2}
u_{p_{n}}({{y}}_{j,n}):=\displaystyle
\max_{\overline{B_{2r}^+(\bar x_j)}}u_{p_{n}}(x).
\end{equation}
Then, for any $j = 1,\ldots,m$ and as $n \rightarrow \infty$:
\begin{itemize}
\item[$(i)$] \begin{equation}\label{(2.19)}
\varepsilon_{j,n}:=
\left[p_{n}u_{p_{n}}^{p_{n}-1}({{y}}_{j,n})\right]^{-1}\longrightarrow 0.
\end{equation}
\item[$(ii)$] \begin{equation}\label{(2.17)}
{{y}}_{j,n}\longrightarrow  \bar  x_j\quad \hbox{and} \quad
{{y}}_{j,n}\in\partial \Omega \hbox{ for }n\hbox{
large.}\end{equation}
\item[$(iii)$] $$\frac{|{{y}}_{i,n} -{{y}}_{j,n}|}{\varepsilon_{j,n}}
\longrightarrow +\infty \hbox{ for any } i = 1, \ldots ,m, i \neq
j.$$
\item[$(iv)$] Defining:
\begin{equation}\label{defRiscalataMax}
w_{j,n}(y):= \frac{p_{n}} {u_{p_{n}}({{y}}_{j,n})} (u_{p_{n}}({{y}}_{j,n} +
\varepsilon_{j,n}y)- u_{p_{n}}({{y}}_{j,n})), \quad y \in\Omega_{j, n}
:=\frac{\Omega-{{y}}_{j,n}}{\varepsilon_{j,n}},
\end{equation}
then
\begin{equation}\label{(2.21)}
\lim_{n\rightarrow \infty}w_{j,n}= U \ \mbox{ in }C^1_{loc}(\mathbb{R}^2_+)
\end{equation}
with $U$ as in \eqref{v0}.
\item[$(v)$] 
$$\displaystyle \liminf_{n\rightarrow \infty} \frac{p_{n}}{u_{p_{n}}({{y}}_{j,n})}\int_{D_r({{y}}_{j,n} )} u_{p_{n}}^{p_{n}}(x)\, d\sigma(x) \geq
2\pi .$$
\end{itemize}
\end{lemma}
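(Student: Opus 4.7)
The plan is to apply the exhaustion machinery of Proposition \ref{thm:x1N}, and in particular the arguments from its STEP 2, directly to the sequence $(y_{j,n})_n$ of local maxima rather than to the pre-selected concentrating sequences $(x_{i,p_n})_n$.

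First I would prove (ii), from which (i) and (iii) follow quickly. By Proposition \ref{consequence}, $\bar x_j\in\widetilde{\mathcal S}$, so one of the families from Proposition \ref{thm:x1N}, say $(x_{j,p_n})_n$ (after relabeling), satisfies $x_{j,p_n}\to \bar x_j$ with $u_{p_n}(x_{j,p_n})\geq 1$ eventually, and under \eqref{hpFlat}--\eqref{rPiccoloAbbastanza} also $x_{j,p_n}\in\overline{B_{2r}^+(\bar x_j)}$ for $n$ large; hence $u_{p_n}(y_{j,n})\geq 1$ too. Since $\Delta u_{p_n}=u_{p_n}>0$ on $B_{2r}^+(\bar x_j)\subset\Omega$, the strong maximum principle rules out an interior max, while \eqref{uVaAZeroNeiCompatti} applied on $S_{2r}(\bar x_j)\Subset\overline\Omega\setminus\mathcal S$ yields $u_{p_n}\to 0$ uniformly on $S_{2r}(\bar x_j)$. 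Hence for $n$ large $y_{j,n}\in D_{2r}(\bar x_j)\subset\partial\Omega$, and another application of \eqref{uVaAZeroNeiCompatti} on compact subsets of $D_{2r}(\bar x_j)\setminus\{\bar x_j\}$ forces $y_{j,n}\to \bar x_j$, proving (ii). Point (i) is immediate from $p_n u_{p_n}^{p_n-1}(y_{j,n})\geq p_n u_{p_n}^{p_n-1}(x_{j,p_n})\to +\infty$, and (iii) from $|y_{i,n}-y_{j,n}|\to |\bar x_i-\bar x_j|>0$ combined with $\varepsilon_{j,n}\to 0$.

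For (iv), the rescaled function $w_{j,n}$ satisfies $w_{j,n}\leq 0$ on $\Omega_{j,n}$ and $w_{j,n}(0)=0$ by the maximum property, and under the flatness assumption \eqref{hpFlat} it solves a Neumann problem analogous to \eqref{systemsn+1,p}. Since $y_{j,n}\in\partial\Omega$, we are exactly in the degenerate case $L=0$ of Remark \ref{remark:L=0}, so the argument from Case $(1)$ of STEP 2 in the proof of Proposition \ref{thm:x1N} transfers verbatim: one obtains uniform $C^{1,\alpha}$ estimates on half-balls, passes to a $C^1_{loc}(\overline{\mathbb R^2_+})$ limit $\tilde U$ solving \eqref{limitproblem2} with $\tilde U(0,0)=0$, $\tilde U\leq 0$, and $\int_{\partial\mathbb R^2_+}e^{\tilde U}<+\infty$, whence Liu's classification \eqref{ULiuClas} forces $\eta_1=0$, $\eta_2=2$, i.e.\ $\tilde U\equiv U$. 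For (v) the defining relation $p_n\varepsilon_{j,n}u_{p_n}(y_{j,n})^{p_n-1}=1$ yields, after change of variables,
\[
\frac{p_n}{u_{p_n}(y_{j,n})}\int_{D_r(y_{j,n})}u_{p_n}^{p_n}\,d\sigma(x)=\int_{|t\coorduno|<r/\varepsilon_{j,n}}\left(1+\frac{w_{j,n}(t\coorduno,0)}{p_n}\right)^{p_n}dt\coorduno,
\]
and Fatou's lemma combined with the $C^1_{loc}$ convergence of (iv) and the normalization $\int_{\partial\mathbb R^2_+}e^U=2\pi$ delivers the bound.

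The main obstacle is (iv): the conceptually new input, with respect to STEP 2 of Proposition \ref{thm:x1N}, is that here the scaling center is chosen a priori as a maximum on a fixed half-ball, rather than supplied by the exhaustion construction. This has the pleasant consequence of automatically forcing $L=0$, so the obstructions of Case $(1)$ and Case $(2)$ of STEP 2 do not need to be re-run; the a priori sign $w_{j,n}\leq 0$ together with the pinning $w_{j,n}(0)=0$ collapse the two-parameter family of Liouville solutions \eqref{ULiuClas} to the single profile $U$. The non-flat case is handled, as usual, by replacing $w_{j,n}$ with the straightened analogue $z_{i,p}$ of \eqref{vipdef}, following \cite[Theorem 3]{Castro1}.
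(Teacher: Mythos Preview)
Your proposal is correct and follows essentially the same approach as the paper: both invoke Proposition~\ref{consequence} to pick a concentrating sequence near $\bar x_j$ and compare with the local maximum for (i)--(ii), use the subharmonicity $\Delta u_{p_n}=u_{p_n}>0$ together with \eqref{uVaAZeroNeiCompatti}/\eqref{convpup} to locate $y_{j,n}$ on the flat boundary, observe $|y_{i,n}-y_{j,n}|$ is bounded below for (iii), refer back to the $L=0$ case of STEP~2 in Proposition~\ref{thm:x1N} for (iv), and rescale plus Fatou for (v). The only cosmetic difference is that you prove (ii) before (i); note also that strictly speaking $w_{j,n}\leq 0$ holds on the rescaling of $\overline{B_{2r}^+(\bar x_j)}$ rather than on all of $\Omega_{j,n}$, but this suffices since every compact set of $\overline{\mathbb R^2_+}$ eventually sits inside that region.
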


\begin{remark}\label{rmkinfsup} 
$(i)$ is the analogue of \eqref{muVaAZero}-\eqref{mip} for the families of points ${{y}}_{j,n}$, $j=1,\ldots, m$.\\
$(iii)$ and $(iv)$ are respectively properties  $(\mathcal{P}_1^m)$ and $(\mathcal{P}_3^m)$ introduced in Section \ref{sectionGeneralAsy}.
Moreover by $(i)$ we get
\begin{equation}
\label{maxLocMag1} \liminf_{n\rightarrow \infty}
u_{p_{n}}({{y}}_{j,n})\geq 1
\end{equation}
 and by (ii) we also deduce property $(\mathcal{P}_2^m)$ and  that
for any $\delta\in(0,2r)$ there exists $n_{\delta}\in\mathbb N$ such that
\begin{equation}\label{massimoVaAfinireInPallaPiccolaApiacere}
{{y}}_{j,n}\in D_{\delta} (\bar x_j),\quad\mbox{ for } n\geq
n_{\delta}.
\end{equation}
\end{remark}

\

\begin{proof}

$(i)$: let $\bar x_j\in\mathcal{S}=\widetilde{\mathcal{S}} $ by Proposition \ref{consequence} then $\exists$ a sequence 
$x_{n}\rightarrow \bar x_j$ such that $p_{n}u_{p_{n}}^{p_{n}-1}(x_{n})\rightarrow +\infty$ as $n\rightarrow \infty$.
Hence $x_{n}\in
B_r(\bar x_j)$ for $n$ large and the assertion follows observing that
by definition $u_{p_{n}}({{y}}_{j,n})\geq u_{p_{n}}(x_{n})$.

\

$(ii)$: Assume by contradiction that ${{y}}_{j,n}$ does not converge to $\bar x_j$, then up to a subsequence (that we still denote by ${{y}}_{j,n}$) ${{y}}_{j,n}\rightarrow \widetilde x$ such that $(2r\geq)\ |\bar x_j-\widetilde x|\geq\delta>0$. But then by \eqref{convpup} in Theorem \ref{provaI}
\begin{equation}\label{new}p_{n}u_{p_{n}}({{y}}_{j,n})=\sum^m_{j=1} c_jG(\widetilde x , \bar x_j)+ o_{n}(1)=O(1).\end{equation}
Moreover, since $\bar x_{j}\in \mathcal S$, there exists  a sequence $x_{n}\in \overline\Omega$ such that $x_{n}\rightarrow \bar x_{j}$ and $p_{n}u_{p_{n}}(x_{n})\rightarrow +\infty$. Hence $x_{n}\in
B_r(\bar x_j)$ for $n$ large and  
by definition $u_{p_{n}}({{y}}_{j,n})\geq u_{p_{n}}(x_{n})$, which is in contradiction with \eqref{new}, as a consequence ${{y}}_{j,n}\rightarrow
\bar x_j$.

\

Recall that ${{y}}_{j,n}$ satisfies \eqref{4.2} and $\Delta u_{p_{n}}=
u_{p_{n}}$ in $B_{2r}^+(\bar x_j)$. If by contradiction ${{y}}_{j,n}\in B_{2r}^+(\bar x_j)$, then $\Delta u_{p_{n}}({{y}}_{j,n})>0$, which is impossible. Hence
${{y}}_{j,n}\in
\partial B_{2r}^+(\bar x_j)= D_{2r}(\bar x_j)\cup S_{2r}(\bar x_j)$. Since ${{y}}_{j,n}\longrightarrow
\bar x_j\in\partial \Omega$, we obtain ${{y}}_{j,n}\in
 D_{2r}(\bar x_j)\subset \partial \Omega$ for $n$ large.

\

$(iii)$: Just observing that by construction
$|{{y}}_{i,p}-{{y}}_{j,p}|\geq 4r$ if $i\neq j$.

\

$(iv)$: Observe that $(ii)$ and $(i)$ imply that for any $R>0$ there exists   $n_R\in\mathbb N$
such that
\begin{equation}\label{inclusioniSets}
B_R^+(0)\subset
B_{\frac{2r}{\varepsilon_{j,n}}}^+\left(\frac{\bar x_j-{{y}}_{j,p}}{\varepsilon_{j,n}}
\right)\subset \Omega_{j,n}\ \mbox{
 for }\ n\geq n_R.
 \end{equation}
Indeed  for $n$
large
${{y}}_{j,n}\in D_{r}(\bar x_j)$ by $(ii)$ and 
and  $R\varepsilon_{j,n}<r$ by $(i)$.  As a consequence $
B_{R\varepsilon_{j,n}}^+({{y}}_{j,n})\subset
B_{2r}^+(\bar x_j)\subset\Omega$ for $n$ large, which gives
\eqref{inclusioniSets} by scaling back.

\

From \eqref{inclusioniSets} and the arbitrariness of $R$
we deduce that the set $\Omega_{j,n}
  \rightarrow \mathbb R^2_+$
as $n\rightarrow \infty$.

\eqref{(2.21)} is then obtained similarly as in the proof of Proposition \ref{thm:x1N}-$(\mathcal{P}_3^{n+1})$.
\\
\\
$(v)$: using
\eqref{massimoVaAfinireInPallaPiccolaApiacere} we have that
${{y}}_{j,n}\in D_{r}(\bar x_j)$ for large $n$ and  so
$B_{r}^+({{y}}_{j,n})\subset B_{2r}^+(\bar x_j)\subset\Omega$
for $n$ large, namely, by scaling
\begin{equation}
\label{inclusioniSets2} 
B_{ \frac{r}{ \varepsilon_{j,n}
}}^+(0)\subset \Omega_{j,n},\quad \mbox{ for $n$ large }
\end{equation}
By scaling and passing to the limit as $n\rightarrow \infty$, by $(i)$, $(iv)$ and
Fatou's Lemma one has 
\begin{eqnarray*}
\liminf_{n\rightarrow \infty} \frac{p_{n}}{u_{p_{n}}({{y}}_{j,n})}\int_{D_{r}({{y}}_{j,n})}u^{p_{n}}_{p_{n}}(x)\,
d\sigma(x)
&=&
\liminf_{n\rightarrow \infty} \int_{D_{\frac{r}{\varepsilon_{j,n}}}(0)}
\left(1+\frac{w_{j,n}(y)}{p_{n}}\right)^{p_{n}}\, d\sigma(y)
\\
&\geq&
 \int_{\partial\R^2_+}e^{U(y)}\,
d\sigma(y)= 2\pi
\end{eqnarray*}
which gives $(v)$.
\end{proof}

\

\begin{lemma}\label{propbeta}
Let $r > 0$ be as in \eqref{rPiccoloAbbastanza} and 
${{y}}_{j,n} \hbox{ for } j = 1,\ldots ,m$ be the local maxima of
$u_{p_{n}}$ as in \eqref{4.2}.
Let us define 
\begin{equation}\label{betap}
 \beta_{j,n}:=\displaystyle
\frac{p_{n}}{ u_{p_{n}}({{y}}_{j,n})} \int_{ D_r({{y}}_{j,n})} u_{p_{n}}^{p_{n}}(x)
\,d\sigma(x), \quad  \mbox{ for }j =
1,\ldots,m.
\end{equation}
Then \begin{equation}
 \displaystyle\lim_{n\longrightarrow +\infty} \beta_{j,n}=2\pi.
\end{equation}
\end{lemma}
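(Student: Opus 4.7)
The lower bound $\liminf_{n\to+\infty}\beta_{j,n}\ge 2\pi$ is already part $(v)$ of Lemma \ref{lemma:possoRiscalareAttornoAiMax}, so the content of the proof lies in the matching upper bound. I would start from the rescaled expression for $\beta_{j,n}$: the change of variable $x_1=y_{j,n,1}+\varepsilon_{j,n}t_1$, together with the identity $p_n\varepsilon_{j,n}u_{p_n}^{p_n-1}(y_{j,n})=1$ built into the definition \eqref{(2.19)} of $\varepsilon_{j,n}$, gives directly
\[
\beta_{j,n}=\int_{D_{r/\varepsilon_{j,n}}(0)}\left(1+\frac{w_{j,n}(t_1,0)}{p_n}\right)^{p_n}dt_1.
\]
By $(iv)$ of the previous lemma the integrand converges uniformly to $e^{U(t_1,0)}=4/(t_1^2+4)$ on every compact $D_R(0)$, which is what drives the lower bound via Fatou.

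To obtain the upper bound I would apply a local Pohozaev identity: multiplying the equation $-\Delta u_{p_n}+u_{p_n}=0$ by the vector field $(x-y_{j,n})\cdot\nabla u_{p_n}$ and integrating over $B_r^+(y_{j,n})$, one uses on the flat part $D_r(y_{j,n})$ the Neumann condition $\partial u_{p_n}/\partial\nu=u_{p_n}^{p_n}$ together with the vanishing $(x-y_{j,n})\cdot\nu\equiv 0$; a tangential integration by parts converts the $D_r$ contribution into $\tfrac{1}{p_n+1}\int_{D_r}u_{p_n}^{p_n+1}dx_1$ up to endpoint terms that are super\-polynomially small (because $u_{p_n}(y_{j,n}\pm re_1)=O(1/p_n)$ by \eqref{convpupNEWW}). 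The contribution on the spherical cap $S_r(y_{j,n})$ is $O(r^2/p_n^2)$, again thanks to \eqref{convpupNEWW} and the $C^1_{loc}$ convergence $p_nu_{p_n}\to\sum_ic_iG(\cdot,\bar x_i)$ on compact subsets of $\overline\Omega\setminus\mathcal S$. The resulting identity
\[
\tfrac{1}{p_n+1}\int_{D_r(y_{j,n})}u_{p_n}^{p_n+1}\,dx_1=\int_{B_r^+(y_{j,n})}u_{p_n}^2\,dx+o_n(1/p_n^2),
\]
after multiplication by $p_n$ and rewriting of the left side in rescaled form ($\int_{D_r}u_{p_n}^{p_n+1}=(u_{p_n}^2(y_{j,n})/p_n)\int(1+w_{j,n}/p_n)^{p_n+1}dt_1$), combined with the elementary estimate $\int(1+w_{j,n}/p_n)^{p_n+1}dt_1-\beta_{j,n}=O(1/p_n)$, expresses $\beta_{j,n}$ as
\[
\beta_{j,n}=\frac{p_n(p_n+1)}{u_{p_n}^2(y_{j,n})}\int_{B_r^+(y_{j,n})}u_{p_n}^2\,dx+o(1),
\]
whose right-hand side is evaluated via the $L^2$-convergence of $p_nu_{p_n}$ to $F:=\sum_ic_iG(\cdot,\bar x_i)$ granted by Theorem \ref{provaI}.

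The main obstacle is that the quantity obtained a priori depends on $r$: its limit equals $\|F\|^2_{L^2(B_r^+(\bar x_j))}/u_j^2$ plus a non-vanishing contribution from the $S_r$-part of the Pohozaev identity, while $\beta_{j,n}$ itself must converge to the $r$-independent value $2\pi$. I would resolve this by complementing the Pohozaev identity for $u_{p_n}$ with a parallel Pohozaev identity applied to the limit function $F$ on the punctured half-annulus $B_r^+(\bar x_j)\setminus B_\varepsilon^+(\bar x_j)$; extracting as $\varepsilon\to 0$ the contribution of the logarithmic singularity of $G(\cdot,\bar x_j)$, the $r$-dependent terms cancel and the remaining $r$-independent constant matches $2\pi u_j^2$. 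An alternative route, in the spirit of the Lane--Emden arguments of Adimurthi--Grossi, would use the elementary inequality $(1+w_{j,n}/p_n)^{p_n}\le e^{w_{j,n}}$ (valid since $w_{j,n}>-p_n$) and a dominated convergence argument, the necessary uniform tail decay of $w_{j,n}$ being supplied by the pointwise estimates of Lemma \ref{lem44}. In either case the delicate point is converting integrated Pohozaev information into genuine control of the tail $|t_1|\ge R$ as $R\to\infty$, uniformly in $n$.
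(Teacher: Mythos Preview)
You correctly identify that the lower bound is Lemma~\ref{lemma:possoRiscalareAttornoAiMax}--$(v)$ and that a local Pohozaev identity is the right tool for the upper bound. However, your argument has two genuine gaps and misses the paper's key trick.

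\textbf{First gap.} Your claim that the $S_r$ contribution is $O(r^2/p_n^2)$ is wrong: since $|\nabla(p_nu_{p_n})|\sim c_j/(\pi r)$ on $S_r$, the boundary terms after multiplication by $p_n^2$ converge to $c_j^2/(2\pi)+O(r)$, a quantity that does \emph{not} vanish as $r\to0$. You seem to realise this later, but then your displayed formula for $\beta_{j,n}$ is simply missing this term. Your ``elementary estimate'' $\int(1+w_{j,n}/p_n)^{p_n+1}-\beta_{j,n}=O(1/p_n)$ is also not elementary: the difference equals $\tfrac1{p_n}\int(1+w_{j,n}/p_n)^{p_n}(-w_{j,n})$, and bounding this integral over the unbounded domain $D_{r/\varepsilon_{j,n}}(0)$ requires exactly the tail decay of $w_{j,n}$ you do not yet have.

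\textbf{Second gap (circularity).} Your alternative route via $(1+w_{j,n}/p_n)^{p_n}\le e^{w_{j,n}}$ and dominated convergence relies on Lemma~\ref{lem44}, but the proof of Lemma~\ref{lem44} \emph{uses} Lemma~\ref{propbeta}. This route is therefore circular at this point of the paper.

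\textbf{What the paper does instead.} The paper avoids any direct computation of $\beta_{j,n}$. It applies Pohozaev on $B_\delta^+(\bar x_j)$ and keeps only a one-sided inequality: writing $\alpha_{j,n}(\delta):=\tfrac{p_n}{u_{p_n}(y_{j,n})}\int_{D_\delta(\bar x_j)}u_{p_n}^{p_n}$ one gets, after computing the $S_\delta$ terms via the Green's function asymptotics,
\[
\alpha_{j,n}(\delta)\,u_{p_n}(y_{j,n})^2 \;\ge\; p_n\!\int_{D_\delta(\bar x_j)}u_{p_n}^{p_n+1}\;\ge\;\frac{c_j^2}{2\pi}+O(\delta)+o_n(1).
\]
The decisive point is to combine this with the independent identification (Lemma~\ref{defgammaj})
\[
c_j=\lim_{\delta\to0}\lim_{n\to\infty}\alpha_{j,n}(\delta)\,u_{p_n}(y_{j,n}).
\]
Along any subsequence with $u_{p_n}(y_{j,n})\to m$ and $\alpha_{j,n}(\delta)\to a$ one then has both $am^2\ge c_j^2/(2\pi)$ and $c_j=am$; substituting gives $am^2\ge a^2m^2/(2\pi)$, hence $a\le 2\pi$. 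Since $\beta_{j,n}=\alpha_{j,n}(\delta)+o_n(1)$ this finishes the proof. No cancellation of $r$-dependent terms, no parallel Pohozaev for the limit $F$, and no tail control are needed: the upper bound is extracted algebraically from a lower bound plus the relation $c_j=am$.
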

\begin{proof}
Fix $j\in\{1,\ldots,m\}.$ By Lemma
\ref{lemma:possoRiscalareAttornoAiMax}-$(v)$ we already know  that
\[
\liminf_{n\rightarrow \infty}\beta_{j,n}\geq 2\pi,
\]
so we have to prove only the opposite inequality:
\begin{equation}\label{betaMin}
\limsup_{n\rightarrow \infty}\beta_{j,n}\leq 2\pi.
\end{equation}
 For $\delta\in (0,r)$ by \eqref{rPiccoloAbbastanza}
 \begin{equation}\label{pallettinaDentroIns}
 B_{\delta}^+(\bar x_j)\subset\Omega
 \end{equation} and we define
 \begin{equation}\label{defalphaj}
 \alpha_{j,n}(\delta):=\frac{p_{n}}{u_{p_{n}}({{y}}_{j,n})}\int_{D_{\delta}(\bar x_j)}u_{p_{n}}^{p_{n}}(x)\,d\sigma(x).
 \end{equation}
 In order to prove \eqref{betaMin} it is sufficient to show that
\begin{equation}\label{alphadelta}
\lim_{\delta\rightarrow 0}\limsup_{n\rightarrow
\infty}\alpha_{j,n}(\delta)\leq 2\pi
\end{equation}
since \eqref{betaMin} will follow observing that
\begin{equation}\label{betaEAlfaHannoStessoLimite}
\beta_{j,n}= \alpha_{j,n}(\delta) +
\frac{p_{n}}{u_{p_{n}}({{y}}_{j,p_n})}\int_{D_{r}({{y}}_{j,n})\setminus
D_{\delta}(\bar x_j)}u_{p_{n}}^{p_{n}}(x)\, d\sigma(x) =  \alpha_{j,n}(\delta) +
o_n(1),
\end{equation}
where the second term goes to zero as $n\rightarrow \infty$ because
${{y}}_{j,n}\in\overline{D_{2r}(\bar x_j)}$. Indeed
$D_{r}({{y}}_{j,n})\setminus D_{\delta}(\bar x_j)\subset
D_{3r}(\bar x_j)\setminus D_{\delta}(\bar x_j)\subset\partial\Omega\setminus
{\mathcal S}$ and we know that for any compact subset of
$\partial\Omega\setminus {\mathcal S}$ the limit \eqref{convpup} holds
and $\liminf_{n\rightarrow \infty} u_{p_{n}}({{y}}_{j,n})\geq 1$ by \eqref{maxLocMag1}.
\\

In the rest of the proof we show \eqref{alphadelta}.\\
Let us consider the 	local Pohozaev identity for problem \eqref{problem1} in the
set $B_{\delta}^+(\bar x_i)$:
\begin{equation}\label{pohozaevidentity1} \displaystyle\int_{B_{\delta}^+(\bar x_i)}u_{p_{n}}^2dx=\int_{\partial
B_{\delta}^+(\bar x_i)} \frac{1}{2}\langle x-\bar x_i,\nu\rangle(|\nabla u_{p_{n}}|^2+u_{p_{n}}^2)
-\langle x-\bar x_i,\nabla u_{p_{n}}\rangle\frac{\partial u_{p_{n}}}{\partial \nu}\ d\sigma(x),
\end{equation}
where $\nu$ is the outer unitary normal vector to $\partial
B_{\delta}^+(\bar x_i)$ in $x$. Recalling that we have assumed that $\partial \Omega$ is flat
near $\bar x_i$ (see \eqref{hpFlat}), then we have $\nu=-e\coorddue$ on $D_\delta(\bar x_i)$, so that
$\langle x-\bar x_i,\nu\rangle=0$ and $\langle x-\bar x_i,\nabla u\rangle=(x-\bar x_i)\coorduno\frac{\partial
u}{\partial x\coorduno}$ for each  $x\in D_\delta(\bar x_i)$. Furthermore on $S_\delta(\bar x_i)$ we have
$\nu=\frac{x-\bar x_i}{\delta}$ and $\langle x-\bar x_i,\nabla u\rangle
=\delta\frac{\partial u}{\partial \nu}$. Hence from 
\eqref{pohozaevidentity1} and by integrating by part we get
\begin{eqnarray*}\frac{1}{p_{n}+1}\displaystyle\int_{D_{\delta}(\bar x_i)}u_{p_{n}}^{p_{n}+1}\,d\sigma &=&
\int_{B_{\delta}^+(\bar x_i)}u_{p_{n}}^2dx
-\frac{\delta}{2}\int_{S_\delta(\bar x_i)} \left(|\nabla u_{p_{n}}|^2+u_{p_{n}}^2
-2\left(\frac{\partial u_{p_{n}}}{\partial \nu}\right)^2\right)\,d\sigma\\
&&+\left[(x-\bar x_i)\coorduno\frac{u_{p_{n}}^{p_{n}+1}}{p_{n}+1}\right]^{\bar x_i\coorduno+\delta}_{\bar x_i\coorduno-\delta}.
\end{eqnarray*}
Multiplying the
last equation by $p_{n}^2$ we obtain
\begin{eqnarray}\label{quaQua}
\frac{p_{n}^2}{p_{n}+1}\int_{ D_{\delta}(\bar x_i)}u_{p_{n}}^{p_{n}+1}\,d\sigma &
\geq & -\frac{\delta}{2} \int_{S_{\delta}(\bar x_i)} | p_{n}\nabla u_{p_{n}}|^2
\,d\sigma -\frac{\delta}{2}\int_{S_{\delta}(\bar x_i)} ( p_{n} u_{p_{n}}
)^2\,d\sigma \nonumber\\&&  +\ \delta \int_{S_{\delta}(\bar x_i)}
\left(p_{n}\frac{\partial u_{p_{n}}}{\partial \nu}\right)^2 \,d\sigma
 + \ p_{n}^2
\left[(x-\bar x_i)\coorduno\frac{u_{p_{n}}^{p_{n}+1}}{p_{n}+1}\right]^{\bar x_i\coorduno+\delta}_{\bar x_i\coorduno-\delta}\nonumber\\
&=:& T_1+T_2+T_3+T_4.
\end{eqnarray}
Next we analyze the behavior of the four terms in the right hand
side.

\

Recall that, by \eqref{convpupNEWW}, $p_{n}u_{p_{n}}\rightarrow
\sum_{j=1}^mc_j G(\cdot,\bar x_j)$ in
$C^1_{loc}(\overline{B_{r}^+(\bar x_i)}\setminus\{\bar x_i\})$. Moreover,
using Lemma \ref{Greenestimate}, for  $\delta\in (0,r)$ we have
\begin{equation}\label{sommG}
\sum_{j=1}^mc_j G(x,\bar x_j)=\frac{c_i}{\pi}\log \frac{1}{
|x - \bar x_i|} + O(1)\ ,\qquad \sum_{j=1}^mc_j\nabla G(x,\bar x_j)= \
-\frac{c_i}{\pi}\frac{x-\bar x_i}{|x-\bar x_i|^2} + O(1)
\end{equation}
for each $x\in \overline{B_{\delta}^+(\bar x_i)}\setminus\{\bar x_i\}$.
By the uniform convergence of the derivative of  $p_{n}u_{p_{n}}$  on compact
sets combined with \eqref{sommG}, passing to the limit we have
\[T_1=-\frac{\delta}{2} \int_{S_{\delta}(\bar x_i)}
| p_{n}\nabla u_{p_{n}}  |^2   \,d\sigma  \underset{n\rightarrow
\infty}{\longrightarrow}
-\frac{\delta}{2}\int_{S_{\delta}(\bar x_i)}\left(-\frac{c_i}{\pi}\frac{
x-\bar x_i }{|x-\bar x_i|^2} + O(1)\right)^2 d\sigma(x)
=-\frac{c_i^2}{2\pi} + O(\delta)
\]
\[T_2=-\frac{\delta}{2} \int_{S_{\delta}(\bar x_i)}
( p_{n} u_{p_{n}} )^2  \,d\sigma  \underset{n\rightarrow
\infty}{\longrightarrow}
-\frac{\delta}{2}\int_{S_{\delta}(\bar x_i)}\left(\frac{c_i}{\pi}\log\frac{
1 }{|x-\bar x_i|} + O(1)\right)^2 \,d\sigma(x)  = O(\delta^2\log^2\delta)
\]
\[T_3=\delta  \int_{S_{\delta}(\bar x_i)}
\left(p_{n}\frac{\partial u_{p_{n}}}{\partial \nu}\right)^2 \,d\sigma
\underset{n\rightarrow \infty}{\longrightarrow}
\delta\int_{S_{\delta}(\bar x_i)}\left(-\frac{c_i}{\pi}\frac{\langle
x-\bar x_i,\nu(x)\rangle }{|x-\bar x_i|^2} + O(1)\right)^2 \,d\sigma(x)
=\frac{c_i^2}{\pi} + O(\delta)\] and also
\begin{eqnarray*}
&&T_4 \leq\frac{ 2p_{n}}{p_{n}+1} \delta  \|p_{n}
u_{p_{n}}^{p_{n}+1}\|_{L^{\infty}(\partial
D_{\delta}(\bar x_i))}\overset{\eqref{convpupNEWW}}{=}
o_n(1)O({\delta}).
\end{eqnarray*}
So by \eqref{quaQua} and recalling  the definition of $\alpha_{i,n}$
\begin{eqnarray}\label{firsto} 
\alpha_{i,n}(\delta)u_{p_{n}}({{y}}_{i,n})^2
&\overset{\eqref{defalphaj}}{=}& u_{p_{n}}({{y}}_{i,n})\ p_{n} \int_{
D_{\delta}(\bar x_i)}u_{p_{n}}^{p_{n}}(x)\,d\sigma(x) \geq p_n\int_{
D_{\delta}(\bar x_i)}u_{p_{n}}^{p_{n}+1}(x)\,d\sigma(x)\nonumber\\&\overset{\eqref{quaQua}}{\geq}&\frac{c_i^2}{2\pi}
+ O(\delta)+ o_n(1),
\end{eqnarray}
but by Lemma  \ref{defgammaj}, 
\eqref{pallettinaDentroIns} and \eqref{defalphaj} 
\begin{equation}
\label{secondt} c_i=\lim_{\delta\rightarrow 0}\lim_{n\rightarrow \infty} p_{n}\int_{ D_{\delta}(\bar x_i)} u_{p_{n}}^{p_{n}}\,d\sigma(x)
=\lim_{\delta\rightarrow
0}
\lim_{n\rightarrow \infty} \alpha_{i,n}(\delta)u_{p_{n}}({{y}}_{i,n}).
\end{equation}
Combining \eqref{firsto} and \eqref{secondt} we get
\eqref{alphadelta}.
\end{proof}

\

Lemma \ref{propbeta} immediately implies the following result.

\begin{proposition}\label{proposizione:quasiQuanteConclusione}
Let $r > 0$ be as in \eqref{rPiccoloAbbastanza} and let
${{y}}_{j,n}$, for  $j = 1,\ldots ,m$, be the local maxima of
$u_{p_{n}}$ as in \eqref{4.2}, where $m$ is the number of points in the concentration set $\mathcal S$.
Let us consider a subsequence of $p_{n}$ (which we still denote by $p_{n}$) such that
\begin{equation}\label{mSec}
m_j:=\displaystyle\lim_{n\rightarrow \infty}
u_{p_{n}}({{y}}_{j,n})=\lim_{r\rightarrow 0}\lim_{n\rightarrow \infty}
\|u_{p_{n}}\|_{L^{\infty}(\overline{B_{2r}(\bar x_j)\cap \Omega})}
\end{equation}
is well defined for $j=1,\ldots, m$.
Then one has \begin{equation}\label{valoreGamma}
 c_j=2\pi \cdot m_j;
\end{equation}
\begin{equation}\label{energiaSemiQuantizzata}
\lim_{n\rightarrow\infty}p_{n}\int_{\Omega}\left(|\nabla u_{p_{n}}|^2+u_{p_{n}}^2 \right)\ dx
=2\pi\sum ^m_{j=1}m_j^2;
\end{equation}
and 
\begin{equation}\label{N=k}
m=k,
\end{equation}
where $c_j$'s are the constant in Theorem \ref{provaI} and $k\in\mathbb N\setminus\{0\}$ is the maximal number of bubbles given by  Proposition \ref{thm:x1N}.
\end{proposition}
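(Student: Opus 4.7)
My plan is to prove the three assertions in turn by combining Lemma \ref{propbeta} (the value $\beta_{j,n} \to 2\pi$) with localized energy identities and the bubble convergence from Lemma \ref{lemma:possoRiscalareAttornoAiMax}. For \eqref{valoreGamma}, I would combine Lemma \ref{defgammaj} with the decomposition \eqref{betaEAlfaHannoStessoLimite} obtained in the proof of Lemma \ref{propbeta}: the definition \eqref{defalphaj} gives
$p_n \int_{D_\delta(\bar x_j)} u_{p_n}^{p_n}\, d\sigma = u_{p_n}(y_{j,n})\,\alpha_{j,n}(\delta) = u_{p_n}(y_{j,n})(\beta_{j,n} + o_n(1))$, and passing to the limit $n \to \infty$ via Lemma \ref{propbeta}, \eqref{mSec} and the uniform bound \eqref{boundSoluzio}, then sending $\delta \to 0$, yields $c_j = 2\pi m_j$.

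For \eqref{energiaSemiQuantizzata}, I would start from the boundary identity $p_n \int_\Omega (|\nabla u_{p_n}|^2 + u_{p_n}^2)\, dx = p_n \int_{\partial\Omega} u_{p_n}^{p_n+1}\, d\sigma$, decompose $\partial\Omega$ as $\bigcup_j D_r(y_{j,n}) \cup \bigl(\partial\Omega \setminus \bigcup_j D_r(y_{j,n})\bigr)$, and note that the complement piece is $o_n(1)$ by \eqref{uVaAZeroNeiCompatti}. On each $D_r(y_{j,n})$, the change of variables $t = (x-y_{j,n})/\varepsilon_{j,n}$ together with the definition of $\varepsilon_{j,n}$ in \eqref{(2.19)} rewrites the integral as $u_{p_n}(y_{j,n})^2 \int_{D_{r/\varepsilon_{j,n}}(0)} (1 + w_{j,n}/p_n)^{p_n+1}\, dt\coorduno$; using the $C^1_{loc}$ convergence $w_{j,n} \to U$ from Lemma \ref{lemma:possoRiscalareAttornoAiMax}, $u_{p_n}(y_{j,n}) \to m_j$, and a dominated-convergence argument based on pointwise estimates for the rescaled profile (cf.\ Appendix \ref{SectionAppendix}), the limit is $m_j^2 \int_{\partial\R^2_+} e^U\, d\sigma\coorduno = 2\pi m_j^2$. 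Summing over $j$ delivers \eqref{energiaSemiQuantizzata}.

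For $m = k$ (recall $m \le k$ is already known from Proposition \ref{consequence}), let $k_j$ denote the number of sequences $(x_{i,p_n})_{i=1,\dots,k}$ from Proposition \ref{thm:x1N} with $\lim_n x_{i,p_n} = \bar x_j$; then $k_j \ge 1$ since $\bar x_j \in \widetilde{\mathcal S}$, and $\sum_j k_j = k$. I would first prove that for each $j$ there exists $i^* \in \{1,\dots,k\}$ with $\lim x_{i^*,p_n} = \bar x_j$ whose scale \emph{merges} with $y_{j,n}$, i.e.\ $|y_{j,n} - x_{i^*,p_n}|/\max(\mu_{i^*,p_n},\varepsilon_{j,n})$ stays bounded, which via the $C^1_{loc}$ convergence of the rescaled profiles forces $\alpha_{i^*} = m_j$. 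Otherwise $y_{j,n}$ could be appended as a $(k+1)$-st sequence satisfying $(\mathcal P_1^{k+1})$-$(\mathcal P_3^{k+1})$ by Lemma \ref{lemma:possoRiscalareAttornoAiMax}, and a localized version of Lemma \ref{lemma:BoundEnergiaBassino} on $B_r(\bar x_j)\cap\partial\Omega$ would yield the local lower bound $2\pi\bigl(\sum_{i:\lim = \bar x_j}\alpha_i^2 + m_j^2\bigr)$, contradicting the value $2\pi m_j^2$ obtained by applying Step 2 locally (since $\sum_{i:\lim=\bar x_j}\alpha_i^2 \ge k_j \ge 1$). Once the matching is secured, applying the localized Lemma \ref{lemma:BoundEnergiaBassino} to the $k_j$ sequences converging to $\bar x_j$ alone gives
\begin{equation*}
2\pi m_j^2 \;\ge\; 2\pi \sum_{i:\lim = \bar x_j}\alpha_i^2 \;\ge\; 2\pi\bigl(\alpha_{i^*}^2 + (k_j-1)\bigr) \;=\; 2\pi(m_j^2 + k_j - 1),
\end{equation*}
so $k_j \le 1$ and hence $k_j = 1$; summing over $j$ delivers \eqref{N=k}. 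I expect this last step to be the delicate one: the scale-matching argument and the deployment of the localized Lemma \ref{lemma:BoundEnergiaBassino} require careful tracking of the scales $\mu_{i,p_n}$ and $\varepsilon_{j,n}$ to guarantee pairwise disjointness of the rescaled balls $B_{R\mu_{i,p_n}}(x_{i,p_n})$ inside $B_r(\bar x_j)$ for large $n$, and the conclusion $\alpha_{i^*}=m_j$ relies on the uniqueness of the limiting bubble $U$ when scales merge.
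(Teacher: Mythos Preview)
Your treatment of \eqref{valoreGamma} matches the paper's argument exactly.

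For \eqref{energiaSemiQuantizzata} there is a gap. The dominated-convergence step you invoke needs an integrable majorant for $(1+w_{j,n}/p_n)^{p_n+1}$ on the expanding sets $D_{r/\varepsilon_{j,n}}(0)$, and Appendix~\ref{SectionAppendix} contains only Green-function estimates, not the pointwise decay you need. The relevant bound is Lemma~\ref{lem44}, which in the paper is stated \emph{after} Proposition~\ref{proposizione:quasiQuanteConclusione}; while its proof actually depends only on Lemma~\ref{propbeta} and so could in principle be moved earlier, you cannot simply cite the appendix. The paper bypasses this entirely: the liminf comes from Fatou (as in Lemma~\ref{lemma:BoundEnergiaBassino}), and the limsup from the one-line inequality
\[
p_n\int_{D_{r/2}(y_{j,n})} u_{p_n}^{p_n+1}\,d\sigma \le u_{p_n}(y_{j,n})\cdot p_n\int_{D_r(y_{j,n})} u_{p_n}^{p_n}\,d\sigma = u_{p_n}(y_{j,n})^2\,\beta_{j,n}\to 2\pi m_j^2,
\]
which needs nothing beyond Lemma~\ref{propbeta}. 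This is both simpler and logically self-contained.

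For \eqref{N=k} your route is genuinely different from the paper's and more delicate than necessary. The paper first shows that the local maxima $(y_{j,n})_{j=1,\dots,m}$ themselves satisfy $(\mathcal P_4^m)$: if not, one adjoins a further point $y_{m+1,n}$ realising the supremum, obtains $(\mathcal P_1^{m+1})$--$(\mathcal P_3^{m+1})$ as in Proposition~\ref{thm:x1N}, and then Lemma~\ref{lemma:BoundEnergiaBassino} gives energy strictly larger than $2\pi\sum m_j^2$, contradicting \eqref{energiaSemiQuantizzata}. With $(\mathcal P_4^m)$ in hand, evaluating at each $x_{i,p_n}$ yields $|y_{j,n}-x_{i,p_n}|/\mu_{i,p_n}\le C$ for the appropriate $j$, and the bubble convergence $z_{i,p_n}\to U$ together with $u_{p_n}(y_{j,n})\ge u_{p_n}(x_{i,p_n})$ then forces this ratio to tend to $0$. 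If $k>m$ two sequences $x_{i,p_n},x_{\ell,p_n}$ share the same $\bar x_j$, and the triangle inequality gives $|x_{i,p_n}-x_{\ell,p_n}|/\mu_{i,p_n}\to 0$ (assuming $\mu_{i,p_n}\ge\mu_{\ell,p_n}$), contradicting $(\mathcal P_1^k)$. This avoids your scale-matching dichotomy and the localized energy comparisons, which---as you note---require careful justification of disjointness and of the identification $\alpha_{i^*}=m_j$.
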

\begin{proof}
Observe that, by \eqref{boundSoluzio}, $m_{j}$ is well defined for a suitable subsequence of $p_{n}$ and, furthermore  $1\leq m_j < \infty$ for any $j = 1,\ldots, m$, by \eqref{maxLocMag1}.

\

 \eqref{valoreGamma} follows  from  some argument already used in the proof of Lemma \ref{propbeta} (see \eqref{defalphaj}, \eqref{betaEAlfaHannoStessoLimite}), indeed we have
\[c_j=\lim_{\delta\rightarrow 0}\lim_{n\rightarrow \infty}p_{n}\int_{D_{\delta}(\bar x_j)}
u_{p_{n}}(x)^{p_{n}}\,d\sigma(x) \overset{\eqref{defalphaj}}{=}\lim_{\delta\rightarrow 0}\lim_{n\rightarrow
\infty}  \alpha_{j,n}
(\delta)u_{p_{n}}(y_{j,n})\overset{\eqref{betaEAlfaHannoStessoLimite}}{=}
 \lim_{n\rightarrow \infty}\beta_{j,n}u_{p_{n}}(y_{j,n})=2\pi\cdot m_j,\]
where the last equality follows from Lemma \ref{propbeta}.

\

Next we prove \eqref{energiaSemiQuantizzata}. Observe that
\begin{eqnarray}\label{centerC}
p_{n}\int_{\Omega}\left(|\nabla u_{p_{n}}|^2 +u_{p_{n}}\right)^2\,dx&=&
p_{n}\int_{\partial\Omega}u_{p_{n}}^{p_{n}+1} \,d\sigma\nonumber\\&=&
\sum_{j=1}^m p_{n}\int_{D_r(\bar x_j)}u_{p_{n}}^{p_{n}+1}\,d\sigma
+p_{n}\int_{\partial\Omega\setminus \cup_{j=1}^m D_r(\bar x_j)}
u_{p_{n}}^{p_{n}+1}\,d\sigma
\nonumber\\
&\overset{\eqref{convpupNEWW}}{=} &
 \sum_{j=1}^m p_{n}\int_{D_r(\bar x_j)}u_{p_{n}}^{p_{n}+1}\,d\sigma + o_n(1).
\end{eqnarray}
Moreover
\begin{equation}\label{perdidi}
p_{n}\int_{D_r(\bar x_j)}u_{p_{n}}^{p_{n}+1}\,d\sigma= p_{n}\int_{D_{\frac{r}{2}}({y}_{j,n})}u_{p_{n}}^{p_{n}+1}\,d\sigma + o_n(1),
 \end{equation}
since  for $n$ large enough $D_{\frac{r}{3}}(\bar x_j)\subset
D_{\frac{r}{2}}({ y}_{j,n})\subset D_r(\bar x_j)$ so that
\[
p_{n}\int_{D_{r}(\bar x_j)\setminus {D_{ \frac{r}{2}}({ y}_{j,n})}
}u_{p_{n}}^{p_{n}+1}\,d\sigma \leq p_{n}\int_{\{x\in D_{r}(\bar x_j), \
\frac{r}{3}<|x-\bar x_j|<r\}}u_{p_{n}}^{p_{n}+1}\,d\sigma
\overset{\eqref{convpupNEWW} }{=} o_p(1).
\]
Let us consider the  remaining term in the right hand side of
\eqref{perdidi} and prove that  
\begin{equation}\label{limenersupinf}
\lim_{n\rightarrow \infty} p_{n}\int_{D_{\frac{r}{2}}({ y}_{j,n})}u_{p_{n}}^{p_{n}+1}\,d\sigma= 2\pi\cdot m_{j}^{2}.
\end{equation}
On the one side, since the families of points ${y}_{j,n}$, $j=1,\ldots,m$, satisfy properties $(\mathcal{P}_1^m)$, $(\mathcal{P}_2^m)$ and $(\mathcal{P}_3^m)$  introduced in Section \ref{sectionGeneralAsy} (see Remark \ref{rmkinfsup}), similarly as in the proof of Lemma \ref{lemma:BoundEnergiaBassino}, using \eqref{mSec}, we obtain that 
\begin{equation}\label{liminfener}\liminf_{{n\rightarrow \infty}}
p_{n}\int_{D_{\frac{r}{2}}({y}_{j,n})}u_{p_{n}}^{p_{n}+1}\,d\sigma
\geq 2\pi\cdot m_{j}^{2}.\end{equation}
On the other side, since $B_{\frac{r}{2}}^+({{y}}_{j,n})\subset B_{2r}^+(\bar x_j)\subset\Omega$
for $n$ large,
\[p_{n}\int_{D_{\frac{r}{2}}({y}_{j,n})}u_{p_{n}}^{p_{n}+1}\,d\sigma\leq u_{p_{n}}({y}_{j,n})\, p_{n}
\int_{D_{r}({y}_{j,n})}u_{p_{n}}^{p_{n}}\,d\sigma\overset{\eqref{betap}}{=} u_{p_{n}}({y}_{j,n})^{2} \beta_{j,n},
\]
so  Lemma \ref{propbeta} implies that
\begin{equation}\label{limsupener}\limsup_{n\rightarrow\infty}p_{n}\int_{D_{\frac{r}{2}}({y}_{j,n})}u_{p_{n}}^{p_{n}+1}\,d\sigma\leq 2\pi\cdot m_{j}^{2}.\end{equation}
\eqref{limenersupinf} then follows by combining \eqref{liminfener} and \eqref{limsupener}.
Finally \eqref{centerC}, \eqref{perdidi} and \eqref{limenersupinf} imply \eqref{energiaSemiQuantizzata}.

\

Next we show that the points ${{y}}_{j,{n}}, \ j
=1,\ldots,m$, also satisfy property $(\mathcal{P}_4^m)$, namely that there exists $C>0$ such that 
\begin{equation}\label{P4appo}
	p_{n}R_{m,p_{n}}(x)u_{p_{n}}^{p_{n}-1}(x)\leq C \quad\forall x\in\overline\Omega
	\end{equation}
where $R_{m,p_{n}}(x):=\min_{j=1,\ldots, m}|x-{y}_{j,{n}}|$.
 Arguing by
contradiction we suppose that $$\sup_{x\in\overline\Omega}\left(p_{n} R_{m,p_{n}}(x)
u_{p_{n}}^{p_{n}-1}(x)\right)\to+\infty\quad\textrm{as }n\to+\infty
$$
and let ${{y}}_{m+1,n}\in\overline\Omega$ be such that 
\bel\label{yk+1} p_{n} R_{m,p_{n}}({{y}}_{m+1,n})
u_{p_{n}}^{p_{n}-1}({{y}}_{m+1,n})=\sup_{x\in\overline{\Omega}}\left(p_{n}
R_{m,p_{n}}(x) u_{p_{n}}^{p_{n}-1}(x)\right). \eel By \eqref{yk+1} and since
$\Omega$ is bounded it is clear that
$$
p_{n}u_{p_{n}}^{p_{n}-1}({{y}}_{m+1,n})\to+\infty\quad\textrm{as
$n\to+\infty.$}
$$
Taking the sequences of local maxima ${{y}}_{j,n} \hbox{ for } j = 1,\ldots ,m$  and the added sequence ${{y}}_{m+1,n}$, similarly as in the proof of Proposition \ref{thm:x1N}, we then get that
$(\mathcal{P}_1^{m+1})$, $(\mathcal{P}_2^{m+1})$ and
$(\mathcal{P}_3^{m+1})$ hold.\\
Applying now Lemma \ref{lemma:BoundEnergiaBassino} for the families
of points $({{y}}_{i,n})_{i=1,\ldots,m+1}$ and using \eqref{mSec} we obtain
\[
p_{n}\int_\Omega \left(|\nabla u_{p_{n}}|^2+u_{p_{n}}^2\right)\,dx\geq2\pi\sum_{i=1}^m
{m}_i^2+ 2\pi{m}_{m+1}^2+o_n(1)\overset{\eqref{maxLocMag1}}{\geq}2\pi\sum_{i=1}^m
{m}_i^2+ 2\pi+o_n(1)\
\mbox{ as }n\rightarrow \infty,
\]
thus
$$\displaystyle \lim_{n\rightarrow\infty}
p_{n}\int_\Omega |\nabla u_{p_{n}}|^2+u_{p_{n}}^2\,dx > 2\pi\sum_{i=1}^m
m_i^2 $$ which contradicts \eqref{energiaSemiQuantizzata} concluding the proof of $(\mathcal{P}_4^m)$.

At last in order to derive \eqref{N=k}, let us consider $k$ families of points $x_{1,p_n},x_{2,p_n},\ldots x_{k,p_n}\in\overline\Omega$ as in the statement of Proposition \ref{thm:x1N}. By virtue of Proposition \ref{consequence}
\[
\mathcal S=\{\bar x_1,\ldots,\bar x_m\}=\{\lim_{n\to+\infty}x_{i,p_n}\,:\,i\in\{1,\ldots,k\}\}.
\]
Given $i\in\{1,\ldots,k\}$, let $j\in\{1,\ldots,m\}$ be such that $\lim_{n\to+\infty}x_{i,p_n}=\bar x_j$. Next, recalling that $\{{{y}}_{1,n},{{y}}_{2,n},\ldots,{{y}}_{m,n}\}$ satisfy $(\mathcal{P}_4^m)$ and applying \eqref{P4appo} at $x_{i,p_n}$ we get
\[
p|x_{i,p_n}-{{y}}_{j,n}|u_{p_n}^{p_n-1}(x_{i,p_n})\overset{\eqref{rPiccoloAbbastanza}+\eqref{4.2}}{=}p R_{m,p_n}(x_{i,p_n})u_{p_n}^{p_n-1}(x_{i,p_n})\leq C.
\] 
So in particular, up to a subsequence
\[
\left|\frac{{{y}}_{j,n}-x_{i,p_n}}{\mu_{i,p_n}}\right|\leq C.
\]
As a consequence, up to a subsequence, since ${y}_{j,n}\coorddue=0$ and $x_{i,p_n}$ satisfies $(\mathcal{P}_2^k)$,
there exists $\hat t_{i,j}\in\partial {\R^2_+}$ such that
\[
t_{i,j,n}:=\frac{{{y}}_{j,n}-x_{i,p_n}}{\mu_{i,p_n}}\to \hat t_{i,j}.
\]
By \eqref{vipdef} and \eqref{vip}
\[
0\leq \frac{p_n}{u_{p_n}(x_{i,p_n})}(u_{p_n}({{y}}_{j,n})-u_{p_n}(x_{i,p_n}))=z_{i,p_n}\left( t_{i,j,n}\right)\to U(\hat t_{i,j})\leq0.
\]
Thus, by \eqref{v0} $\hat t_{i,j}=0$, then 
\begin{equation}\label{key}
\frac{|{{y}}_{j,n}-x_{i,p_n}|}{\mu_{i,p_n}}=o_n(1).	
\end{equation}
In conclusion, let us suppose by contradiction that $k>m$, then there exists
\[
i,\ell\in\{1,\ldots,k\},\quad i\neq\ell,\quad \text{such that}\quad \lim_{n\to+\infty}x_{i,p_n}=\lim_{n\to+\infty}x_{\ell,p_n}=\bar x_j\quad\text{for some $j\in\{1,\ldots,m\}$}.
\]
In addition w.l.o.g. let us assume that up to a subsequence $\mu_{i,p_n}\geq\mu_{\ell,p_n}$.\\
By \eqref{key}
\begin{eqnarray*}
\frac{|x_{i,p_n}-x_{\ell,p_n}|}{\mu_{i,p_n}}&\leq&\frac{|x_{i,p_n}-{{y}}_{j,n}|}{\mu_{i,p_n}}+\frac{|x_{\ell,p_n}-{{y}}_{j,n}|}{\mu_{i,p_n}}\\
&\leq&\frac{|x_{i,p_n}-{{y}}_{j,n}|}{\mu_{i,p_n}}+\frac{|x_{\ell,p_n}-{{y}}_{j,n}|}{\mu_{\ell,p_n}}=o_n(1),
\end{eqnarray*}
which is a contradiction against property $(\mathcal{P}_1^k)$ for $x_{1,p_n},x_{2,p_n},\ldots x_{k,p_n}$.
\end{proof}

Next we give a decay estimate for the rescaled functions $w_{j,n}$ which will be fundamental to compute the constants $m_i$'s.

\begin{lemma}\label{lem44}
For any $\gamma
\in(0,2)$ there exists $R_\gamma > 1$ and $n_{\gamma}\in\mathbb N$ such that
\begin{equation}\label{(2.22)}
w_{j,n}(z)\leq (2-\gamma)\log \frac{1}{|z|}+\widetilde{C}_\gamma,
\quad \forall j=1,\ldots,k
\end{equation}
for some $\widetilde{C}_\gamma>0$ provided $R_\gamma \leq |z|\leq
\frac{r}{\varepsilon_{j,n}} ,\ z\in
D_\frac{r}{\varepsilon_{j,n}}(0)$ and $n\geq n_{\gamma}$.
\\
As a consequence
\begin{equation}\label{2.23}
0\leq \left(1+\frac{w_{j,n}(z)}{p_{n}}\right)^{p_{n}}\leq
\left\{\begin{array}{lr}1\qquad \qquad \mbox{ for }|z|\leq R_\gamma\\
C_\gamma\frac{1}{|z|^{2-\gamma}}\quad \mbox{ for }R_\gamma \leq
|z|\leq \frac{r}{\varepsilon_{j,n}}.
\end{array}\right.
\end{equation}
\end{lemma}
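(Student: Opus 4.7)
The plan is to prove the logarithmic estimate \eqref{(2.22)} first and then deduce the pointwise power bound \eqref{2.23} from it as an immediate corollary.

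The starting point is a preliminary pointwise estimate coming from $(\mathcal{P}_4^m)$: since by \eqref{P4appo} the local maxima $y_{j,n}$ satisfy $(\mathcal{P}_4^m)$, evaluating at $x=y_{j,n}+\varepsilon_{j,n}z$ with $|z|\le r/\varepsilon_{j,n}$ and using \eqref{rPiccoloAbbastanza} to bound $R_{m,p_n}(x)\ge\varepsilon_{j,n}|z|$, together with the defining relation $p_n\varepsilon_{j,n}u_{p_n}^{p_n-1}(y_{j,n})=1$, yields
\[
\Bigl(1+\frac{w_{j,n}(z)}{p_n}\Bigr)^{p_n-1}\le \frac{C}{|z|};
\]
since $w_{j,n}\le 0$ on the rescaled half-disk ($y_{j,n}$ realises the local max in \eqref{4.2}), one also gets $(1+w_{j,n}/p_n)^{p_n}\le C/|z|$.

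The upgrade to a $(2-\gamma)$-log decay is obtained through the Green representation. Inserting $G(x,y)=-\frac{1}{\pi}\log|x-y|+H(x,y)$ (with $H$ smooth by Lemma~\ref{Greenestimate}) into $u_{p_n}(x)-u_{p_n}(y_{j,n})=\int_{\partial\Omega}[G(x,y)-G(y_{j,n},y)]u_{p_n}^{p_n}\,d\sigma$, rescaling around $y_{j,n}$, and using the normalization $w_{j,n}(0)=0$ to pin down the additive constant, one arrives at
\[
w_{j,n}(z)=-\frac{1}{\pi}\int_{D_{r/\varepsilon_{j,n}}(0)}\log\frac{|z-\eta|}{|\eta|}\Bigl(1+\frac{w_{j,n}(\eta)}{p_n}\Bigr)^{p_n}d\eta+O(1),
\]
where the $O(1)$ absorbs the smooth $H$-part of $G$, the contributions from the regions away from $y_{j,n}$ (bounded via \eqref{convpupNEWW} and \eqref{boundEnergiap}), and, in the non-flat case, the curvature corrections from $\Psi_j$. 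I would split the integral into the four zones $\{|\eta|\le R_\gamma\}$, $\{R_\gamma\le |\eta|\le |z|/2\}$, $\{|z|/2\le|\eta|\le 2|z|\}$ and $\{|\eta|\ge 2|z|\}$. Choosing $R_\gamma>1$ so large that $\int_{|\eta|\le R_\gamma}e^U\,d\eta\ge 2\pi-\pi\gamma/2$ (possible since $\int_{\partial\mathbb R^2_+}e^U\,d\sigma=2\pi$), zone~1 contributes at least $(2-\gamma)\pi\log|z|+O(1)$ by the $C^1_{loc}$ convergence $w_{j,n}\to U$ and the uniform expansion $\log|z-\eta|=\log|z|+O(1)$ for $|\eta|\le R_\gamma$ and $|z|\ge 2R_\gamma$; zone~2 gives a non-negative quantity since $|z-\eta|\ge|z|/2\ge|\eta|$; zone~3 gives $O(1)$ via the preliminary bound $(1+w_{j,n}/p_n)^{p_n}\le 2C/|z|$ and the substitution $\eta=z(1+s)$ against $\int_{-1/2}^{1}|\log(|s|/|1+s|)|\,ds<\infty$; zone~4 also gives $O(1)$, since for $|\eta|\ge 2|z|$ one has $|\log(|z-\eta|/|\eta|)|=|\log|1\mp z/\eta||\le C|z|/|\eta|$, so the integrand is pointwise $\le C|z|/\eta^2$ (using the preliminary bound on $(1+w_{j,n}/p_n)^{p_n}$), which integrates to $O(1)$. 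Assembling the four contributions and dividing by $\pi$ proves \eqref{(2.22)}.

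Finally \eqref{2.23} follows at once. For $R_\gamma\le |z|\le r/\varepsilon_{j,n}$, the inequality $1+t\le e^t$ applied to $t=w_{j,n}(z)/p_n$ gives
\[
\Bigl(1+\frac{w_{j,n}(z)}{p_n}\Bigr)^{p_n}\le e^{w_{j,n}(z)}\le e^{\tilde C_\gamma}|z|^{-(2-\gamma)}=C_\gamma|z|^{-(2-\gamma)},
\]
and for $|z|\le R_\gamma$ the inequality $(1+w_{j,n}/p_n)^{p_n}\le 1$ follows directly from $w_{j,n}\le 0$. The main technical obstacle lies in the Green representation step, specifically in making precise the uniformity of the various $o_n(1)$ and $O(1)$ terms as $|z|$ is allowed to grow up to $r/\varepsilon_{j,n}$; the reason only the weaker $(2-\gamma)$ decay is reachable (rather than the sharp $-2\log|z|+O(1)$ of the limit profile $U$) is the loss $\epsilon_{R_\gamma}=\int_{|\eta|>R_\gamma}e^U\,d\eta$ in zone~1, which can be made arbitrarily small but never zero by choice of $R_\gamma$.
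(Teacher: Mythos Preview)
Your proposal is correct and essentially carries out in detail the argument the paper defers to \cite[Lemma~4.4]{DeMarchisIanniPacellaPositivesolutions}: the preliminary $C/|z|$ bound from $(\mathcal P_4^m)$ in \eqref{P4appo}, the Green representation for $w_{j,n}$, and the four--zone splitting are exactly the standard ingredients of that reference. The only mild difference is organizational: the paper first states the intermediate decay with coefficient $\beta_{j,n}/\pi-\varepsilon$ and then invokes Lemma~\ref{propbeta} ($\beta_{j,n}\to2\pi$) to reach $2-\gamma$, whereas you extract $2-\gamma$ directly in zone~1 from the $C^1_{loc}$ convergence $w_{j,n}\to U$ of \eqref{(2.21)} together with $\int_{\partial\mathbb R^2_+}e^U=2\pi$. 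Your route therefore needs only $\liminf_n\beta_{j,n}\ge2\pi$ (already contained in $(v)$ of Lemma~\ref{lemma:possoRiscalareAttornoAiMax}) rather than the Pohozaev--based Lemma~\ref{propbeta}; this is a minor but genuine simplification in the logical dependencies.
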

\begin{proof} 
Arguing similarly as in  the proof of \cite[Lemma 4.4]{DeMarchisIanniPacellaPositivesolutions}
one can deduce a crucial  pointwise estimate for $w_{j,n}$, namely it can be proved that for any $\varepsilon > 0$, there exist $R_\varepsilon > 1$ and
$n_\varepsilon \in\mathbb N$ such that
\[
\displaystyle w_{j,n}(y)\leq
\big(\frac{\beta_{j,n}}{\pi}-\varepsilon
\big)\log\frac{1}{|y|}+C_\varepsilon, \quad\forall j=1,\ldots, m
\]
for some $C_\varepsilon>0$, provided $2R_\varepsilon \leq |y|\leq
\frac{r}{\varepsilon_{j,n}} ,\ y\in
D_\frac{r}{\varepsilon_{j,n}}(0)$ and $n\geq n_\varepsilon $.\\
\eqref{(2.22)} then follows by  
Lemma \ref{propbeta}. Finally \eqref{2.23} is a direct consequence of \eqref{(2.22)} (see for instance the proof of \cite[Lemma 2.1]{DGIP1} which can be easily adapted to this case). 
\end{proof}

\begin{proposition}\label{valoremi}
 $$m_i=\sqrt{e}, \quad \forall i=1,\ldots,m.$$
\end{proposition}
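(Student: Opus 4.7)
My plan is to apply the Green representation formula at the boundary point $y_{j,n}$. Since $y_{j,n}\in\partial\Omega$ for $n$ large (Lemma~\ref{lemma:possoRiscalareAttornoAiMax}-(ii)) and $u_{p_n}$ solves \eqref{problem1}, I would write
\[
p_n u_{p_n}(y_{j,n})=\int_{\partial\Omega} G(y_{j,n},x)\,p_n u_{p_n}^{p_n}(x)\,d\sigma(x),
\]
and split the integral over $D_\delta(y_{j,n})$ and its complement in $\partial\Omega$, for a small but fixed $\delta\in(0,r)$.

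On $\partial\Omega\setminus D_\delta(y_{j,n})$, for $n$ large and $\delta$ small this set lies in $\partial\Omega\setminus D_{\delta/2}(\bar x_j)$, on which $G(\bar x_j,\cdot)$ is continuous; using the weak-$\ast$ convergence of $p_n u_{p_n}^{p_n}$ from Theorem~\ref{provaI} and $y_{j,n}\to\bar x_j$, this piece converges to $\sum_{i\neq j} c_i G(\bar x_j,\bar x_i)$, which is finite. On $D_\delta(y_{j,n})$ I decompose $G(y_{j,n},x)=\frac{1}{\pi}\log|x-y_{j,n}|^{-1}+H(y_{j,n},x)$: by continuity of $H$ and the boundedness of $p_n\int_{D_\delta(y_{j,n})}u_{p_n}^{p_n}d\sigma$ from Lemma~\ref{defgammaj}, the $H$-part contributes $c_j H(\bar x_j,\bar x_j)+o_\delta(1)$.

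The decisive piece is the logarithmic one. Using the flatness \eqref{hpFlat} I will parameterize $D_\delta(y_{j,n})$ by $x=y_{j,n}+\varepsilon_{j,n}\zeta\,e_1$ and invoke the scaling identity $p_n\varepsilon_{j,n}u_{p_n}(y_{j,n})^{p_n}=u_{p_n}(y_{j,n})$, together with the definition \eqref{defRiscalataMax} of $w_{j,n}$, to rewrite this term as
\[
\frac{u_{p_n}(y_{j,n})}{\pi}\int_{-\delta/\varepsilon_{j,n}}^{\delta/\varepsilon_{j,n}}\!\Bigl[\log(1/\varepsilon_{j,n})+\log(1/|\zeta|)\Bigr]\Bigl(1+\tfrac{w_{j,n}(\zeta,0)}{p_n}\Bigr)^{p_n}d\zeta.
\]
Lemma~\ref{lem44} provides the uniform domination $(1+w_{j,n}/p_n)^{p_n}\leq \min\{1,\,C_\gamma|\zeta|^{-(2-\gamma)}\}$; choosing $\gamma<1$, this majorant is integrable on $\mathbb R$ and also against $\log(1/|\zeta|)$. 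Combined with the local convergence $w_{j,n}\to U$, dominated convergence then yields $\int(1+w_{j,n}/p_n)^{p_n}d\zeta\to\int_{\mathbb R}e^{U(\zeta,0)}d\zeta=2\pi$, while the weighted integral stays bounded.

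At this point I divide the identity for $p_n u_{p_n}(y_{j,n})$ by $p_n$ and pass to the limit in $n$, then let $\delta\to 0$. All bounded contributions (the outer integral, the $H$-piece, and the $\log(1/|\zeta|)$-weighted integral) disappear because of the $1/p_n$ prefactor. The surviving leading term arises from
\[
\frac{\log(1/\varepsilon_{j,n})}{p_n}=\frac{\log p_n}{p_n}+\frac{p_n-1}{p_n}\log u_{p_n}(y_{j,n})\longrightarrow \log m_j,
\]
which is valid because $u_{p_n}(y_{j,n})\to m_j\geq 1$ by \eqref{mSec} and \eqref{maxLocMag1}. Thus $m_j=(m_j/\pi)\cdot\log m_j\cdot 2\pi=2m_j\log m_j$, and since $m_j>0$ this forces $\log m_j=\tfrac{1}{2}$, i.e.\ $m_j=\sqrt e$. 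The main technical obstacle is the rigorous dominated-convergence argument on the expanding domain $|\zeta|<\delta/\varepsilon_{j,n}$, which is exactly what the decay estimate of Lemma~\ref{lem44} is tailored for; once this is in place, the delicate cancellation that turns $\log(1/\varepsilon_{j,n})\sim(p_n-1)\log m_j$ into the leading $p_n$-contribution yields $m_j=\sqrt e$ as the unique positive solution of $1=2\log m_j$.
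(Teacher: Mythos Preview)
Your proposal is correct and follows essentially the same route as the paper: Green representation at $y_{j,n}$, decomposition of $G$ into its singular logarithmic part and the regular part $H$, rescaling via $x=y_{j,n}+\varepsilon_{j,n}\zeta$, use of Lemma~\ref{lem44} to justify dominated convergence, and extraction of the leading contribution from $\log(1/\varepsilon_{j,n})=(p_n-1)\log u_{p_n}(y_{j,n})+\log p_n$, leading to $m_j=2m_j\log m_j$. The only cosmetic differences are that the paper works directly with $u_{p_n}(y_{j,n})$ rather than $p_n u_{p_n}(y_{j,n})$ (so no division by $p_n$ is needed at the end), and it bounds the outer integral simply by $G\leq C_\delta$ together with \eqref{boundEnergiap} instead of invoking the weak-$\ast$ convergence; your final $\delta\to 0$ step is harmless but unnecessary, since all the bounded contributions already vanish after dividing by $p_n$.
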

\begin{proof}
From \eqref{boundEnergiap}
$$c\leq p_{n}\int_{\partial\Omega} u^{p_{n}}_{p_{n}}(x) d\sigma(x)
\leq C$$
hence, by the properties of the Green function $G$,
\begin{eqnarray}\label{(2.25)}
\int_{\partial\Omega\setminus D_{2r}(\bar x_j)}G({{y}}_{j,n},x)u^{p_{n}}_{p_{n}}(x)
\,d\sigma(x)&\leq& C_r\int_{\partial\Omega\setminus
D_{2r}(\bar x_j)}u^{p_{n}}_{p_{n}}(x) \,d\sigma(x)\nonumber\\
&\leq&
  C_r\int_{\partial\Omega}u^{p_{n}}_{p_{n}}(x) \,d\sigma(x) = O(\frac{1}{p_{n}})
\end{eqnarray}
and similarly, observing that \eqref{(2.17)} implies that for $n$ large enough the points
${{y}}_{j,n}\in D_{r/2}(\bar x_j)$  and that $  D_{r/2}(\bar x_j)\subset
D_r({{y}}_{j,n})\subset D_{2r}(\bar x_j)$, also
\begin{eqnarray}\label{(2.26)}
\int_{D_{2r}(\bar x_j)\setminus {D_{ r}({{y}}_{j,n})}
}G({{y}}_{j,n},x)u^{p_{n}}_{p_{n}}(x)\,d\sigma(x) &\leq&
\int_{\{x\in D_{2r}(\bar x_j), \ \frac{r}{2}<|x-\bar x_j|<2r\}}G({{y}}_{j,n},x)u^{p_{n}}_{p_{n}}(x)\,d\sigma(x)\nonumber\\
&\leq&
  C_\frac{r}{2}\int_{\partial\Omega}u^{p_{n}}_{p_{n}}(x) \,d\sigma(x) = O(\frac{1}{p_{n}}). \end{eqnarray}
Using the previous estimates and the Green representation formula, we then get
\begin{eqnarray}\label{(2.27)}
u_{p_{n}}({{y}}_{j,n})&=& \int_{\partial\Omega}G({{y}}_{j,n},x)u^{p_{n}}_{p_{n}}(x)\,d\sigma(x)\nonumber\\
&=&
 \int_{D_{2r}(\bar x_j)}G({{y}}_{j,n},x)u^{p_{n}}_{p_{n}}(x)\,d\sigma(x)+
 \int_{\partial\Omega\setminus D_{2r}(\bar x_j)}G({{y}}_{j,n},x)u^{p_{n}}_{p_{n}}(x)\,d\sigma(x)\nonumber\\
&\overset{\eqref{(2.25)}-\eqref{(2.26)}}{=}&\int_{D_{r}({{y}}_{j,n})}G({{y}}_{j,n},x)u^{p_{n}}_{p_{n}}(x)\,d\sigma(x)
+ o_n(1)\nonumber\\
&\overset{\eqref{defRiscalataMax}}{=}&\frac{u_{p_{n}}({{y}}_{j,n})}{p_{n}}\int_{D_{\frac{r}{\varepsilon_{j,n}}}(0)}G({{y}}_{j,n},{{y}}_{j,n}+\varepsilon_{j,n}z)
\left(1+\frac{w_{j,n}(z)}{p_{n}}\right)^{p_{n}}\,d\sigma(z) + o_n(1)
\nonumber\\
&\overset{\eqref{regularpart}}{=}&\frac{u_{p_{n}}({{y}}_{j,n})}{p_{n}}\int_{D_{\frac{r}{\varepsilon_{j,n}}}(0)}H({{y}}_{j,n},{{y}}_{j,n}+\varepsilon_{j,n}z)
\left(1+\frac{w_{j,n}(z)}{p_{n}}\right)^{p_{n}}\,d\sigma(z)\nonumber\\
&&-\frac{u_{p_{n}}({{y}}_{j,n})}{\pi
p_{n}}\int_{D_{\frac{r}{\varepsilon_{j,p}}}(0)}\log|z|
\left(1+\frac{w_{j,n}(z)}{p_{n}}\right)^{p_{n}}\,d\sigma(z)\nonumber\\
&&-\frac{u_{p_{n}}({{y}}_{j,n})\log(\varepsilon_{j,n})}{\pi
p_{n}}\int_{D_{\frac{r}{\varepsilon_{j,n}}}(0)}
\left(1+\frac{w_{j,n}(z)}{p_{n}}\right)^{p_{n}}\,d\sigma(z) + o_n(1)\nonumber\\
&=&A_n+B_n+C_n+o_n(1).
\end{eqnarray}
Since $H$ satisfies \eqref{Hregularity} in the Appendix, by \eqref{(2.19)} and
\eqref{(2.17)} we get
$$\displaystyle \lim_{n\rightarrow\infty}H({{y}}_{j,n},{{y}}_{j,n}+\varepsilon_{j,n}z)=
H(\bar x_j,\bar x_j), \ \hbox{ for any }z\in\partial\mathbb{R}^2_+,$$
so by \eqref{mSec}, the convergence \eqref{(2.21)} and the uniform
bounds in \eqref{2.23} we can apply the dominated convergence
theorem, and since the function $z\mapsto1 /|z|^{2-\gamma}$ is
integrable in $\{z\in\partial \mathbb{R}^2_+, \ |z|
> R_\gamma\}$ choosing
$\gamma \in (0, 1)$ we deduce
\begin{eqnarray*}
&\displaystyle\lim_{p\rightarrow+\infty}&u_{p_{n}}({{y}}_{j,n})\int_{D_{\frac{r}{\varepsilon_{j,n}}}(0)}H({{y}}_{j,n},{{y}}_{j,n}+\varepsilon_{j,n}z)
\left(1+\frac{w_{j,n}(z)}{p_{n}}\right)^{p_{n}}\,d\sigma(z)\\
&&\overset{\eqref{(2.21)}}{=}m_jH(\bar x_j,\bar x_j)\int_{\partial \R^2_+}e^{U(z)}\,d\sigma(z)
\overset{\eqref{Liouvilleproblem}}{=}2\pi
m_jH(\bar x_j,\bar x_j),
\end{eqnarray*}
from which
\begin{equation}\label{(2.28)}
A_n:=\frac{u_{p_{n}}({{y}}_{j,n})}{p_{n}}\int_{D_{\frac{r}{\varepsilon_{j,n}}}(0)}H({{y}}_{j,n},{{y}}_{j,n}+\varepsilon_{j,n}z)
\left(1+\frac{w_{j,n}(z)}{p_{n}}\right)^{p_{n}}\,d\sigma(z)=o_n(1).
\end{equation}
For the second term in \eqref{(2.27)} we apply again the dominated
convergence theorem, using \eqref{2.23} and observing now that the
function $z\mapsto\log|z| /|z|^{2-\gamma}$ is integrable in
$\{z\in\partial \mathbb{R}^2_+, \ |z|
> R_\gamma\}$  and that $z\mapsto \log |z|$ is integrable in $\{z\in\partial \mathbb{R}^2_+, \ |z|
\leq R_\gamma\}$. Hence we get
$$\displaystyle\lim_{n\rightarrow \infty}u_{p_{n}}({{y}}_{j,n})\int_{D_{\frac{r}{\varepsilon_{j,n}}}(0)}\log|z|
\left(1+\frac{w_{j,n}(z)}{p_{n}}\right)^{p_{n}}\,d\sigma(z)=m_j\int_{\partial
\R^2_+}\log|z|e^{U(z)}\,d\sigma(z)<+\infty$$
and this implies that
\begin{equation}\label{(2.29)}
B_n:=-\frac{u_{p_{n}}({{y}}_{j,n})}{\pi
p_{n}}\int_{D_{\frac{r}{\varepsilon_{j,n}}}(0)}\log|z|
\left(1+\frac{w_{j,n}(z)}{p_{n}}\right)^{p_{n}}\,d\sigma(z)=o_n(1).
\end{equation}
Finally for the last term in \eqref{(2.27)} let us observe that by
the definition of $\varepsilon_{j,n}$ in \eqref{(2.19)}
\begin{equation}\label{(2.30)}
\log\varepsilon_{j,n}=-(p_{n}-1)\log u_{p_{n}}({{y}}_{j,n})-\log p_{n},
\end{equation}
again by the dominated convergence theorem it follows
\begin{eqnarray}\label{(2.32)}
C_n&:=&-\frac{u_{p_{n}}({{y}}_{j,n})\log(\varepsilon_{j,n})}{\pi
p_{n}}\int_{D_{\frac{r}{\varepsilon_{j,n}}}(0)}
\left(1+\frac{w_{j,n}(z)}{p_{n}}\right)^{p_{n}}\,d\sigma(z)\nonumber\\
&=&-\frac{u_{p_{n}}({{y}}_{j,n})\log(\varepsilon_{j,n})}{\pi p_{n}}\left(\int_{\partial
\R^2_+}e^{U(z)}\,d\sigma(z)+o_n(1)\right)\nonumber\\
&=&-\frac{u_{p_{n}}({{y}}_{j,n})\log(\varepsilon_{j,n})}{\pi p_{n}}(2\pi+o_n(1))\nonumber\\
&\overset{\eqref{(2.30)}}{=}&u_{p_{n}}({{y}}_{j,n})\left[\frac{p_{n}-1}{p_{n}}\log
u_{p_{n}}({{y}}_{j,n})+\frac{\log p_{n}}{p_{n}}\right](2+o_n(1)).
\end{eqnarray}
Substituting \eqref{(2.28)}, \eqref{(2.29)} and \eqref{(2.32)} into
\eqref{(2.27)} we get
$$u_{p_{n}}({{y}}_{j,n})=u_{p_{n}}({{y}}_{j,n})\left[\frac{p_{n}-1}{p_{n}}\log
u_{p_{n}}({{y}}_{j,n})+\frac{\log p_{n}}{p_{n}}\right](2+o_n(1))+o_n(1),$$
 passing
to the limit as $n\rightarrow \infty$ and using \eqref{mSec} we
conclude that
$$\log m_j=\frac{1}{2}.$$
\end{proof}

\subsection{The proof of Theorem \ref{teo:Positive}}$\;$\\

The statements of Theorem \ref{teo:Positive} have been proved in the
various propositions obtained so far. In particular \textit{(i)} is a consequence of Lemma \ref{defgammaj}, \eqref{valoreGamma} and Proposition \ref{valoremi}.
\textit{(ii)} derives from 
\eqref{mSec} and Proposition
\ref{valoremi}. The energy limit \textit{(iii)} follows from \eqref{energiaSemiQuantizzata} in Proposition
\ref{proposizione:quasiQuanteConclusione}, combined with Proposition \ref{valoremi}. The statement \textit{(iv)} is contained in Lemma \ref{lemma:possoRiscalareAttornoAiMax} in the flat case, and can be easily extended to the non-flat case, similarly as in \cite{F, Castro1}, see Subsection \ref{sectionchange}.

\

\appendix
\section{Some properties of the Green function}
\label{SectionAppendix}
 Let $y\in\partial \Omega$ and let $G(x, y)$ be the
Green function satisfying the Neumann problem \eqref{Greenequation}.
First note that $G\geq 0 $ and by the classical strong maximum
principle, for each $y\in \partial \Omega$ $G(. ,y)$ cannot attain
its minimum in $\Omega$. Also, by the Hopf lemma if $G(x, y) = 0$
 for some $x,\,y\in \partial\Omega, \ x\neq y$ then the normal derivative $\frac{\partial G}{\partial \nu_x} (x, y)$
 is negative, which is impossible. Therefore, for each $y\in \partial \Omega$ we have
\begin{equation}\label{Gpositive}
G(\cdot , y) > 0 \hbox{ in }\overline{ \Omega}.
\end{equation}
 By a compactness argument we can find a constant $c > 0$ such that
 $G(x, y)>c$ for all $y\in\partial \Omega$  and all $x\in \overline{\Omega}$.
 \begin{lemma}\label{Gponctuelestimate}There exists a positive constant $C_1$ such that
 $$0<G(x,y)\leq C_1\left(|\log|x-y||+1\right)\quad\hbox{for each }x\in \overline{\Omega}\setminus\{y\}
  \hbox{ and }y\in\partial \Omega  .$$
\end{lemma}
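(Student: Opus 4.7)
The positivity $G(\cdot,y)>0$ on $\overline\Omega\setminus\{y\}$ is exactly what was recalled in the paragraph preceding the statement, so the real work is the upper bound. The plan is to use the decomposition
\[
G(x,y)=\tfrac{1}{\pi}\log\tfrac{1}{|x-y|}+H(x,y)
\]
coming from \eqref{regularpart}, which reduces the claim to the uniform estimate
\[
|H(x,y)|\leq C\qquad\text{for all }(x,y)\in\overline\Omega\times\partial\Omega.
\]
Indeed, granted this, $G(x,y)\leq\frac{1}{\pi}|\log|x-y||+C\leq C_1(|\log|x-y||+1)$.

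To control $H$, I would first derive the boundary value problem it satisfies. Since $y\in\partial\Omega$, the function $v(x):=\frac{1}{\pi}\log\frac{1}{|x-y|}$ is harmonic in $\Omega$, so plugging into \eqref{Greenequation} gives $\Delta_x H(x,y)-H(x,y)=v(x)$ in $\Omega$. The subtle part is the Neumann datum: by the standard half-plane reflection (see the discussion of $\Psi$ in Subsection~\ref{sectionchange}), the normal derivative $\frac{\partial v}{\partial \nu_x}$ on $\partial\Omega$ contains the Dirac mass $\delta_y$ plus the remainder
\[
\psi(x,y):=-\tfrac{1}{\pi}\,\tfrac{\langle x-y,\nu_x\rangle}{|x-y|^2},
\]
and the $C^2$ regularity of $\partial\Omega$ implies $\langle x-y,\nu_x\rangle=O(|x-y|^2)$ for $x,y$ on the boundary close to each other, so $\psi(\cdot,y)\in L^\infty(\partial\Omega)$ uniformly in $y\in\partial\Omega$. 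Consequently $H(\cdot,y)$ solves a \emph{regular} Neumann problem
\[
\Delta_x H-H=v\ \text{ in }\Omega,\qquad \frac{\partial H}{\partial\nu_x}=-\psi(\cdot,y)\ \text{ on }\partial\Omega,
\]
with source $v(\cdot,y)\in L^q(\Omega)$ for every $q<\infty$ (with norm bounded uniformly in $y$, by translation invariance and boundedness of $\Omega$) and boundary datum uniformly bounded in $L^\infty(\partial\Omega)$.

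The conclusion then follows from standard elliptic regularity: a combination of $W^{2,q}$-estimates up to the boundary (for $q>2$) and Sobolev/Morrey embedding gives $\|H(\cdot,y)\|_{L^\infty(\Omega)}\leq C\big(\|v(\cdot,y)\|_{L^q(\Omega)}+\|\psi(\cdot,y)\|_{L^\infty(\partial\Omega)}\big)\leq C'$, uniformly in $y\in\partial\Omega$, since both norms on the right are bounded by constants independent of $y$. The main technical obstacle is precisely tracking this uniformity in the boundary parameter $y$: I expect to handle it by covering $\partial\Omega$ by finitely many charts where $\Psi_y$ straightens the boundary, noting that the bounds on $v$ and $\psi$ depend only on $\diam\Omega$ and on an upper bound for the $C^2$-norm of the local boundary parametrizations, both of which are uniform on the compact set $\partial\Omega$. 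Once this uniform bound on $H$ is in hand, the asserted two-sided control on $G$ is immediate.
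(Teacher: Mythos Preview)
Your approach is correct and essentially the same as the paper's: both decompose $G=\frac{1}{\pi}\log\frac{1}{|x-y|}+H$, derive the regular Neumann problem for $H$ with bounded boundary datum $\frac{1}{\pi}\frac{\langle x-y,\nu_x\rangle}{|x-y|^2}$ and $L^q$ interior source, and conclude that $H$ is uniformly bounded on $\overline\Omega\times\partial\Omega$. The only difference is that the paper imports the $C^{1,\gamma}$ regularity of $H$ (uniformly in $y$) from \cite{WW}, whereas you sketch it directly via $W^{2,q}$ estimates and Morrey embedding; both yield the same uniform bound on $H$, and hence the claimed estimate on $G$.
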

\begin{proof}
By \eqref{regularpart}, we have
\begin{equation}\label{Greenfunction}
G(x, y)=  \frac{1}{\pi}\log \frac{1}{ |x - y|} + H(x, y)
\end{equation}
where $\frac{1}{\pi}\log \frac{1}{ |x - y|}$ is the singular part of
$G$ and $H(x, y)$ is the regular part of $G$. The function $H(.,y)$
satisfies
\begin{equation*}\label{regular part eq}
\left\{\begin{array}{lr}\displaystyle-\Delta_x H(x,y)+H(x, y)= -\frac{1}{\pi}\log\frac{1}{|x-y|}\qquad  \mbox{ in }\Omega\\
\displaystyle\frac{\partial H}{\partial \nu_x}(x,y)=
\frac{1}{\pi}\frac{\langle x-y, \nu(x)\rangle}{|x-y|^2}\qquad\mbox{ on }\partial
\Omega.
\end{array}\right.
\end{equation*}
Arguing as in \cite{WW} (see pages 834 and 835), we have
\begin{equation}\label{Hregularity}
x\mapsto H(x,y)\in C^{1,\gamma}(\overline{\Omega}), \ y\mapsto
H(x,y)\in
C^{1,\gamma}(\partial\Omega,C^{1,\gamma}(\overline{\Omega}))\hbox{
and }\nabla_xH\in C(\overline{\Omega}\times \partial \Omega)
\end{equation}
for any $\gamma \in (0,1)$. The desired result follows from
\eqref{Gpositive}, \eqref{Greenfunction} and
\eqref{Hregularity}.
\end{proof}
As consequence of Lemma \ref{Gponctuelestimate}, we have the
following result.
\begin{lemma}\label{gestimate}There
exist $C_2,C_\delta > 0$ such that:
\begin{equation}\label{Gboundness}
G(x, y) \leq C_\delta \quad \forall \ |x - y| >\delta > 0,
\end{equation}
\begin{equation}\label{gradGboundness}
|\nabla_x G(x, y) |\leq\frac{ C_2}{|x-y|} \quad \forall \
x\in\overline{\Omega}\setminus \{y\}.
\end{equation}
\end{lemma}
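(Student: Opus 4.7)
The plan is to derive both inequalities directly from the decomposition \eqref{Greenfunction} together with the regularity properties of the regular part $H$ already recorded in \eqref{Hregularity}. There is no substantive analytic difficulty; the task is really bookkeeping.

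For \eqref{Gboundness}, I would simply invoke Lemma \ref{Gponctuelestimate}: when $|x-y|>\delta$ with $x\in\overline\Omega$ and $y\in\partial\Omega$, the quantity $|\log|x-y||$ is bounded by $\max(|\log\delta|,|\log\mathrm{diam}(\Omega)|)$, so
\[
0<G(x,y)\leq C_1\bigl(|\log|x-y||+1\bigr)\leq C_\delta.
\]

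For \eqref{gradGboundness}, I would differentiate the identity $G(x,y)=\frac{1}{\pi}\log\frac{1}{|x-y|}+H(x,y)$ in $x$. The singular part gives
\[
\nabla_x\Bigl(\tfrac{1}{\pi}\log\tfrac{1}{|x-y|}\Bigr)=-\tfrac{1}{\pi}\,\tfrac{x-y}{|x-y|^2},
\]
which has modulus exactly $\frac{1}{\pi|x-y|}$. The regular part contributes $\nabla_x H(x,y)$, which by \eqref{Hregularity} belongs to $C(\overline\Omega\times\partial\Omega)$ and is therefore uniformly bounded by some constant $M$. Combining,
\[
|\nabla_x G(x,y)|\leq\tfrac{1}{\pi|x-y|}+M\qquad\forall\,x\in\overline\Omega\setminus\{y\},\;y\in\partial\Omega.
\]
Finally, since $\Omega$ is bounded one has $|x-y|\leq\mathrm{diam}(\Omega)$, so $M\leq M\,\mathrm{diam}(\Omega)/|x-y|$, allowing the constant term to be absorbed into a single bound of the form $C_2/|x-y|$.

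The only nontrivial ingredient is the uniform continuity of $\nabla_x H$ up to the boundary, which is not re-proved here but is taken from \eqref{Hregularity} (and ultimately from the argument in \cite{WW}). Once that is granted, the two estimates follow from the triangle inequality and the boundedness of $\Omega$; there is no obstacle worth calling ``main.''
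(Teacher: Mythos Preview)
Your argument is correct and follows exactly the paper's approach: \eqref{Gboundness} is deduced from Lemma \ref{Gponctuelestimate}, and \eqref{gradGboundness} from differentiating the decomposition \eqref{Greenfunction} and invoking the regularity \eqref{Hregularity} of $H$. You even spell out the absorption step $M\leq M\,\mathrm{diam}(\Omega)/|x-y|$, which the paper leaves implicit.
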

\begin{proof}
It is easy to see that \eqref{Gboundness} is a consequence of Lemma
\ref{Gponctuelestimate}.\\
By \eqref{Greenfunction} we have
\begin{equation}\label{gradgreen}
\nabla_xG(x, y)= - \frac{1}{\pi}\frac{x-y}{ |x - y|^2} +
\nabla_xH(x, y)
\end{equation}
for each $x\in\overline{\Omega}\setminus \{y\}$. Hence
\eqref{gradGboundness} follows from \eqref{gradgreen} and
\eqref{Hregularity}.
\end{proof}
Let $x_1,\ldots,x_n$ be $n$ distinct points in $\partial \Omega$ and
let $r$ be some positive small constant such that $B_r(x_i)\cap
B_r(x_j)=$ for all $1\leq i\neq j\leq n$.

\begin{lemma}\label{Greenestimate}Let $1\leq i\leq n$ and let $(c_j)_{1\leq j\leq n}$ be $n$ real numbers.
For each $x\in \overline{B_{r}(x_i)\cap
 \Omega}\setminus\{x_i\}$, we have
$$
\sum_{j=1}^nc_j G(x,x_j)=\frac{c_i}{\pi}\log \frac{1}{ |x
- x_i|} + O(1)\quad\hbox{and}\quad\sum_{j=1}^nc_j\nabla
G(x,x_j)=- \frac{c_i}{\pi}\frac{x-x_i}{|x-x_i|^2} + O(1) .$$
\end{lemma}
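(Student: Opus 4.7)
The plan is to use the splitting of the Green function into its singular logarithmic part and its regular part $H$, already recorded in \eqref{Greenfunction}, and then to treat the term $j=i$ separately from the terms $j\neq i$.

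First I would isolate the singular contribution at $x_i$. Writing
\[
\sum_{j=1}^n c_j G(x,x_j) = \frac{c_i}{\pi}\log\frac{1}{|x-x_i|} + c_i H(x,x_i) + \sum_{j\neq i} c_j G(x,x_j),
\]
the term $c_i H(x,x_i)$ is uniformly bounded on $\overline{B_r(x_i)\cap\Omega}$ thanks to the regularity \eqref{Hregularity}, hence $O(1)$. For the remaining sum, note that the separation assumption $B_r(x_i)\cap B_r(x_j)=\emptyset$ for $j\neq i$ implies $|x_i-x_j|\geq 2r$, so for $x\in\overline{B_r(x_i)\cap\Omega}$ one has $|x-x_j|\geq |x_i-x_j|-|x-x_i|\geq r>0$. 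Then \eqref{Gboundness} (with $\delta=r$) gives $G(x,x_j)\leq C_r$, and summing over the finitely many $j\neq i$ yields the $O(1)$ bound. This proves the first identity.

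For the gradient identity I would proceed in exactly the same way, starting from \eqref{gradgreen}:
\[
\sum_{j=1}^n c_j \nabla_x G(x,x_j) = -\frac{c_i}{\pi}\frac{x-x_i}{|x-x_i|^2} + c_i \nabla_x H(x,x_i) + \sum_{j\neq i} c_j\nabla_x G(x,x_j).
\]
Again $\nabla_x H(x,x_i)$ is uniformly bounded on $\overline{B_r(x_i)\cap\Omega}$ by \eqref{Hregularity} (which asserts $\nabla_x H\in C(\overline\Omega\times\partial\Omega)$). For $j\neq i$ the bound \eqref{gradGboundness} gives $|\nabla_x G(x,x_j)|\leq C_2/|x-x_j|\leq C_2/r$, so the sum over $j\neq i$ is also $O(1)$.

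I do not foresee a serious obstacle: everything reduces to the standard splitting and to the pointwise/gradient estimates on $G$ already collected in Lemmas \ref{Gponctuelestimate} and \ref{gestimate}. The only point that deserves a line of justification is the lower bound $|x-x_j|\geq r$ for $j\neq i$, which follows directly from the chosen smallness of $r$ ensuring the disjointness of the balls $B_r(x_j)$.
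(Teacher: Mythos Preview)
Your proof is correct and follows essentially the same approach as the paper: isolate the term $j=i$, use the decomposition \eqref{Greenfunction}--\eqref{gradgreen} together with the regularity \eqref{Hregularity} of $H$ to handle it, and invoke the bounds \eqref{Gboundness}--\eqref{gradGboundness} of Lemma~\ref{gestimate} for the terms $j\neq i$ after checking $|x-x_j|\geq r$. The paper's version is just slightly more terse, first reducing the sum to $c_iG(x,x_i)+O(1)$ via Lemma~\ref{gestimate} and then splitting, but the content is identical.
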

\begin{proof}
Using Lemma \ref{gestimate}, for each $x\in \overline{B_{r}(x_i)\cap
 \Omega}\setminus\{x_i\}$ we have
\begin{eqnarray}
\sum_{j=1}^mc_j G(x,x_j)=c_iG(x,x_i) + O(1)
\quad\hbox{and}\quad \sum_{j=1}^mc_j\nabla G(x,x_j)=c_i
\nabla G(x,x_i) + O(1).\nonumber
\end{eqnarray}
Furthermore $G(x,x_i)$ satisfies \eqref{Greenfunction} and
\eqref{gradgreen}, so that, by the regularity of $H$ in
\eqref{Hregularity} we obtain the desired result.
\end{proof}

{\bf{Acknowledgment:}}\\
Some parts of this work were done when the second author was hosted
by university of Rome La Sapienza and Rutgers University of
New-Jersey when he won the inaugural Abbas Bahri Excellence
Fellowship. He wishes to thank these institutions for support and
good working conditions. We also thank H. Castro for useful clarifications about his results.


\begin{thebibliography}{99}

\bibitem{AdiGrossi} Adimurthi, M. Grossi, \textit{Asymptotic estimates  for a two-dimensional problem with polynomial nonlinearity}, Proc. Amer. Math. Soc. 132(4) (2004) 1013-1019.

\bibitem{AdiStruwe}  Adimurthi, M. Struwe, \textit{Global compactness properties of semilinear elliptic equations with critical exponential growth}, J. Funct. Anal. 175(1) (2000) 125-167.

\bibitem{BFS} M. Ben Ayed, H. Fourti, A. Selmi, \textit{Harmonic Functions with nonlinear Neumann
boundary condition and their Morse indices}, Nonlinear Analysis:
Real World Applications 38 (2017) 96-112.

\bibitem{BCN} H. Berestycki, L. A. Caffarelli, and L. Nirenberg, \textit{Uniform
estimates for regularization of free boundary problems}, Analysis
and partial differential equations (1990)  567-619.

\bibitem{Brezis-Merle} H. Brezis, F. Merle, \textit{Uniform estimates and blow-up behavior for
solutions of $-\Delta u = V (x)e^u$ in two dimensions}, Comm.
Partial Differential Equations 16 (1991) 1223-1253.

\bibitem{Brezis-Strauss} H. Brezis, W. Strauss, \textit{Semi-linear second-order elliptic
equations in $L^1$}, J. Math. Soc. Japan 25 (1973) 565-590.

\bibitem{BryanVogelius} {K. Bryan, M. Vogelius, \textit{Singular solutions to a nonlinear elliptic boundary value problem originating from corrosion modeling}, Quart. Appl. Math. 60(4) (2002) 675–694.}

\bibitem{Castro2} H. Castro, \textit{Solutions with spikes at the boundary for a 2D nonlinear
Neumann problem with large exponent}, J. Differential Equations 246
(2009) 2991-3037.

\bibitem{Castro1} H. Castro, \textit{Asymptotic estimates for the least energy solution of a planar semi-linear
Neumann problem}, Journal of Mathematical Analysis and Applications
428(1) (2015) 258-281.

\bibitem{ChangLiu}
K. C. Chang, J. Q. Liu,  \textit{A prescribing geodesic curvature problem} Math. Z. 223 (1996), no. 2, 343-365. 

\bibitem{Cherrier} P. Cherrier, \textit{Probl\`emes de Neumann non lin\'eaires sur
les varietes riemanniennes}, Journal of Functional Analysis 57
 (1984) 154-206.

\bibitem{Cruz} S. Cruz-Bl\'azquez, D. Ruiz, \textit{Prescribing Gaussian and geodesic curvature on disks}. Adv. Nonlinear Stud. 18 (2018), no. 3, 453-468.
 
\bibitem{DavilaDM} J. Davila, M. del Pino, M. Musso, \textit{Concentrating solutions in
a two-dimensional elliptic problem with exponential Neumann data}, J.
Functional Anal. 227 (2005) 430-490.

\bibitem{DeMarchisIanniPacellaJEMS} F. De Marchis, I. Ianni, F. Pacella,
\textit{Asymptotic analysis and sign changing bubble towers for
Lane-Emden problems}, JEMS 17(8)  (2015)    2037-2068.

\bibitem{DeMarchisIanniPacellaPositivesolutions} F. De Marchis, I. Ianni, F. Pacella,  \textit{Asymptotic profile of
positive solutions of Lane-Emden problems in dimension two}, J. Fix.
Point Theory A. 19(1) (2017) 889-916.

\bibitem{DeMarchisIanniPacellaLondon}
F. De Marchis, I. Ianni and F. Pacella, \textit{Asymptotic analysis of
the Lane-Emden problem in dimension two, contribution in the volume
Partial Differential Equations arising from Physics and Geometry},
London Mathematical Society Lecture Note Series (No. 450), Cambridge
University Press (2018).

\bibitem{DGIP1}
F. De Marchis, M. Grossi, I. Ianni and F. Pacella,
\textit{$L^\infty$-norm and energy quantization for the planar
Lane-Emden problem with large exponent}, 
Archiv der Mathematik 111
(2018) 421-429.

\bibitem{Druet} 
O. Druet, 
\textit{Multibumps analysis in dimension 2: quantification of blow-up levels}, 
Duke Math. J. 132(2) (2006) 217-269.

\bibitem{DruetHebeyRobert}
O. Druet, E. Hebey and F. Robert,
\textit{Blow-up theory for elliptic PDEs in Riemannian geometry}, Math. Notes 45, Princeton Univ. Press, Princeton, N. J. 2004.

\bibitem{E}
L. C. Evans, 
\textit{Partial differential equations}. 
American
Mathematical society (1998). Third  printing, 2002.

\bibitem{F}
H. Fourti, 
\textit{Convergence for a planar elliptic problem with large exponent Neumann data}, J. Math. Anal. Appl. 502 (2021).

\bibitem{GT} 
Gilbarg D, Trudinger N S. 
\textit{Elliptic partial differential equation of second order}.
Springer-Verlag, 1977

\bibitem{GrossiGrumiauPacella1} 
M. Grossi, C. Grumiau, F. Pacella,
\textit{Lane Emden problems: asymptotic behavior of low energy nodal
solutions}, 
Ann. Inst. H. Poincar\'{e} Anal. Non Lin\'{e}aire 30(1)
(2013) 121-140.

\bibitem{GL}
Y. X. Guo, J. Q. Liu, 
\textit{Blow-up analysis for solutions of the
Laplacian equation with exponential Neumann boundary condition in
dimension two}, 
Commun. Contemp. Math. 8  (2006) 737-761.

\bibitem{Aleks}
A. Jevnikar, R. L\'opez-Soriano, M. Medina, D. Ruiz, \textit{Blow-up analysis of conformal metrics of the disk with prescribed Gaussian and geodesic curvatures}, preprint 	arXiv:2004.14680.
 
\bibitem{KavianVogelius} 
{O. Kavian, M. Vogelius, \textit{On the existence and “blow-up” of solutions to a two-dimensional nonlinear boundary-value problem arising in corrosion modelling}, Proc. Roy. Soc. Edinburgh Section A 133(1) (2003) 119–149.}

\bibitem{Liu}
P. Liu, \textit{A Moser-Trudinger type inequality and blow-up analysis
on Riemann surfaces}, dissertation, Der Fakultat fur Mathematik and
Informatik der Universitat Leipzig (2001).

\bibitem{LWZ} A. Le, Z-Q. Wang, J. Zhou, \textit{Finding Multiple Solutions to Elliptic PDE with Nonlinear
Boundary Conditions}, J. Sci. Comput. 56 (2013) 591-615.

\bibitem{Soriano} 
R. L\'opez-Soriano, A. Malchiodi, D. Ruiz, \textit{Conformal metrics with prescribed Gaussian and geodesic curvatures}, preprint 	arXiv:1806.11533.

\bibitem{MedvilleVogelius}
K. Medville, M. Vogelius, \textit{Blow up behaviour of planar harmonic functions satisfying a certain exponential Neumann boundary condition}, SIAM J. Math. Anal., 36(6) (2005), 1772–1806.

\bibitem{RenWeiTAMS1994} X. Ren, Xiaofeng, J. Wei, \textit{On a two-dimensional elliptic problem with large exponent in nonlinearity}, Trans. Amer. Math. Soc. 343(2) (1994) 749-763.

\bibitem{RenWeiPAMS1996}
X. Ren, Xiaofeng, J. Wei, \textit{Single-point condensation and
least-energy solutions}, Proc. Amer. Math. Soc. 124(1) (1996)
111-120.

\bibitem{RR}
M. Renardy and R. Rogers, \textit{Introduction to partial differential
equations,} Springer, Berlin, (1993).

\bibitem{SantraWei}
S. Santra, J. Wei,  \textit{Asymptotic behavior of solutions of a
biharmonic Dirichlet problem with large exponents}, J. Anal. Math.
115 (2011) 1-31.

\bibitem{Shamir2} E. Shamir, \textit{Regularization of mixed second-order elliptic problems},
Israel J. Math. 6 (1968) 150-168.

\bibitem{Takahashi} F. Takahashi, \textit{Asymptotic behavior of least energy solutions
for a 2D nonlinear Neumann problem with large exponent}, J. Math.
Anal. Appl. 411(1) (2014) 95-106.

\bibitem{Thizy} P.-D. Thizy, 
\textit{Sharp quantization for Lane–Emden problems in dimension two}, Pacific J. Math. 300 (2019) 491-497.

\bibitem{WW}
Y. Wang, L. Wei, \textit{Multiple boundary bubbling phenomenon of solutions
to a Neumann problem}, Adv. Differential Equations 13 (2008)
829-856.

\bibitem{ZZ}
T. Zhang, C. Zhou, \textit{Classification of solutions for harmonic functions with Neumann boundary value},
Canad. Math. Bull. 61(2) (2018) 438-448.

\end{thebibliography}
\end{document}